\newtheorem{theorem}{Theorem}
\newtheorem{corollary}[theorem]{Corollary}
\newtheorem{lemma}[theorem]{Lemma}
\newtheorem{example}[theorem]{Example}
\newenvironment {proof} {{\it
Proof.}}{\hspace*{\fill}$\Box$\par\vspace{4mm}}
\newfont{\bb}{msbm10}
\def\:{\! :\!}
\tikzset{->-/.style={decoration={
  markings,
  mark=at position .5 with {\arrow[scale=1.5]{>}}},postaction={decorate}}}
\tikzset{-->-/.style={decoration={
  markings,
  mark=at position .8 with {\arrow[scale=1.5]{>}}},postaction={decorate}}}
\tikzset{->--/.style={decoration={
  markings,
  mark=at position .3 with {\arrow[scale=1.5]{>}}},postaction={decorate}}}
\begin{document}

\title{Loopy, Hankel,  and Combinatorially Skew-Hankel Tournaments}

 \author{Richard A. Brualdi\\
 Department of Mathematics\\
 University of Wisconsin\\
 Madison, WI 53706\\
 {\tt brualdi@math.wisc.edu}
 \and
 Eliseu Fristcher\footnote{This research was part of this author's doctoral studies  performed while  visiting the University of Wisconsin - Madison and supported by a CAPES/BR Grant 0219/13-4.}  \\
 Instituto de Matem\'atica\\
 Universidade Federal Do Rio Grande Do Sul\\
 91509-900 Porto Alegre-RS, Brazil\\
 {\tt eliseu.fritscher@ufrgs.br}
 }

\maketitle

 \begin{abstract}  We investigate tournaments with a specified score vector having additional structure: loopy tournaments in which loops are allowed, Hankel tournaments which are tournaments symmetric about the Hankel diagonal (the anti-diagonal), and combinatorially skew-Hankel tournaments which are skew-symmetric about the Hankel diagonal. In each case, we obtain necessary and sufficient conditions for existence, algorithms for construction, and switches which allow one to move from any tournament of its type to any other, always staying
 within the defined type.

\medskip
\noindent {\bf Key words and phrases: tournament, loopy, score vector, Hankel, skew-Hankel, switches, algorithms.}

\noindent {\bf Mathematics  Subject Classifications: 05C20, 05C50, 05C85, 15B05,15B34. }
\end{abstract}

\section{Introduction}
Let $K_n$ be the complete graph with vertices $\{1,2,\ldots,n\}$.
A {\it tournament}  $T$ of order $n$ is a digraph obtained by orienting the edges of $K_n$.  Listing the vertices of $T$ in an arbitrary  order, we can form  the adjacency matrix of $T$, which is then called a {\it tournament matrix}. This is an $n\times n$ $(0,1)$-matrix
$T=[t_{ij}]$ such that $t_{ii}=0$ for all $i$ and $t_{ij}=1-t_{ji}$ for $i\ne j$, equivalently, such that $T+T^t=J_n-I_n$ where $T^t$ is the transpose of $T$ and $J_n$ is the $n\times n$ matrix of all 1s.   In general, we shall not distinguish between a tournament and a corresponding  tournament matrix. We refer to both as tournaments and label both as $T$ as we have done above. 

We define an $n\times n$  $(0,1)$-matrix $A=[a_{ij}]$ to be  {\it combinatorially skew-symmetric } about the main diagonal provided  that $a_{ij}=1-a_{ji}$ for $i\ne j$. Thus a tournament  matrix is combinatorially skew-symmetric about the main diagonal. An $n\times n$ tournament can be considered as the result of a round-robin tournament with players $1,2,\ldots,n$ where each pair of players compete in a game with player $i$ winning (respectively, losing) the game with player $j$ if and only if $t_{ij}=1$ (respectively, $t_{ij}=0$).     The {\it score vector} of $T$ is $R=(r_1,r_2,\ldots,r_n)$ where $r_i$ is the number of  wins of player $i$ and so the $i$th row sum of $T$. The score vector of $T$ is also the vector of outdegrees of the vertices of $T$. The score vector $R$ determines  the {\it losing vector} $S=(s_1,s_2,\ldots,s_n)$ where $s_i=(n-1)-r_i$ is the number of losses of player $i$  and so the $i$th column sum of $T$. This is because each player plays $n-1$ games  and the sum of the outdegree and indegree of each vertex is $n-1$.
Changing the order of the vertices replaces $T$ with $PTP^t$ for some permutation matrix $P$ whose score vector is $RP^t$.

A tournament matrix has $0$s on its main diagonal. This is natural from the competition point of view above. Yet, from the matrix point of view there is no reason to insist that only 0s occur on the main diagonal.  Thus we shall also consider $n\times  n$ tournaments $T=[t_{ij}]$ where each $t_{ii}$ may be 0 or 1. One possible interpretation of this is the following. Before the round-robin competition begins, each player $i$ flips a coin after calling heads or tails. If player $i$ calls the coin correctly, the player gets a point, and we set $t_{ii}=1$; otherwise, we set $t_{ii}=0$. A correct call adds 1 to a player's score. The score vector $R=(r_1,r_2,\ldots, r_n)$ is obtained, as above, by counting the number of 1s in each row of $T$. The score vector still determines the losing vector $S=(s_1,s_2,\ldots,s_n)$ since $r_i+s_i=n$ for all $i$. But now $S$ is not in general the vector of column sums of $T$.  If $S'=(s_1',s_2',\ldots,s_n')$  is the column sum vector of $T$, then $r_i+s_i'=n-1\mbox{ or } n+1$ depending on whether $t_{ii}=0$ or $t_{ii}=1$. We call a tournament $T$  obtained in this way a  {\it loopy tournament}, since viewed as a digraph there may be loops at the vertices. The trace of $T$ counts the number of correct calls of the coin flips.  Every tournament is also a loopy tournament. The  set of all  tournaments with score vector $R$  is usually denoted by ${\mathcal T}(R)$.  We denote the  set of all loopy tournaments with score vector $R$ by 
${\mathcal T}^{\ell}(R)$.

\begin{example}{\rm 
The following is a $5\times 5$ loopy tournament:
\[T=\left[\begin{array}{ccccc}
0&1&1&0&0\\
0&1&1&0&1\\
0&0&0&1&1\\
1&1&0&0&1\\
1&0&0&0&1\end{array}\right],\mbox{ where } T+T^t=\left[\begin{array}{ccccc}
0&1&1&1&1\\
1&1&1&1&1\\
1&1&0&1&1\\
1&1&1&0&1\\
1&1&1&1&1\end{array}\right].\]
The score vector of $T$ is $(2,3,2,3,2)$; the losing vector is $(3,2,3,2,3)$. The column sum vector of $T$ is $(2,3,2,1,4)$.
}\end{example}

We shall also consider tournaments with  additional structure. Tournaments and loopy tournaments are 
$n\times n$ $(0,1)$-matrices which are combinatorially skew-symmetric about the main diagonal. The {\it anti-diagonal} of an $n\times n$ matrix consists of the positions $\{(i,n+1-i): 1\le i\le n\}$. In a Hankel matrix, the entries in these positions are constant, as are the entries in each of  the other $2(n-1)$ diagonals parallel to the anti-diagonal. We use this association to refer to the anti-diagonal as the {\it Hankel diagonal}.   If $A=[a_{ij}]$ is an $n\times n$ matrix, then  we define its {\it Hankel transpose} to be the matrix $A^h=[a_{ij}']$ obtained from $A$ by transposing across the Hankel diagonal and thus for which $a_{ij}'=a_{n+1-j,n+1-i}$ for all $i$ and $j$.\footnote{By analogy with Toeplitz matrices, it would be natural to call the main diagonal the {\it Toeplitz diagonal} and to call  ordinary transpose the {\it Toeplitz transpose}.}   It is straightforward to check that, as for ordinary transpose, the Hankel transpose satisfies 
$(XY)^h=Y^h X^h$ for $n\times n$ matrices $X$ and $Y$, and that $(X^t)^h
=(X^{h})^t$, written as $X^{th}=X^{ht}$. 

A  {\it  Hankel tournament} is defined to be a tournament $T$ for which $T^h=T$. Thus a $(0,1)$-matrix $T=[t_{ij}]$ is a Hankel tournament if and only if
\[t_{n+1-j,n+1-i}=t_{ij}=1-t_{ji}=1-t_{n+1-i,n+1-j} \mbox{ for all $i\ne j$}.\]
 The entries on the Hankel diagonal of a Hankel tournament  can be 0 or 1 but,  by the combinatorial skew-symmetry of a tournament,  there must be  $\lfloor \frac{n}{2}\rfloor$ 1s on the Hankel diagonal; 
 if $n$ is odd, then the entry $t_{(n+1)/2,(n+1)/2}$ on the Hankel diagonal equals 0, since $T$ is a tournament. Reordering the columns of a Hankel tournament from last to first (and leaving the rows as is), we obtain a symmetric matrix $\widetilde{T}$  which has  $\lfloor \frac{n}{2}\rfloor$ 1s on the main diagonal, and thus $\widetilde{T}$ is the adjacency matrix of a loopy graph of order $n$ with $\lfloor \frac{n}{2}\rfloor$ loops. We call this graph the {\it Hankel loopy graph} of a Hankel tournament $T$ and denote it by $H(T)$.
 The set of all Hankel tournaments with score vector $R$ is denoted  by 
${\mathcal T}_H(R)$.

 %We also consider  {\it Hankel SM-tournaments}, that is, Hankel tournaments  in which 1s are allowed on the main diagonal.

\begin{example}{\rm The following is a Hankel tournament $T$ of order 5 and its associated symmetric matrix $\widetilde{T}$:
\[T=\left[\begin{array}{c|c|c|c|c}
0&0&0&1&1\\ \hline
1&0&0&0&1\\ \hline
1&1&0&0&0\\ \hline
0&1&1&0&0\\ \hline
0&0&1&1&0\end{array}\right]\quad\mbox{and}\quad
\widetilde{T}=\left[\begin{array}{c|c|c|c|c}
1&1&0&0&0\\ \hline
1&0&0&0&1\\ \hline
0&0&0&1&1\\ \hline
0&0&1&1&0\\ \hline
0&1&1&0&0\end{array}\right]
\]
The graph $H(T)$ is a path $1-2-5-3-4$ with a loop at vertex 1 and at vertex 4.
}\end{example}

We also define a {\it combinatorially skew-Hankel tournament} to be an $n\times n$ $(0,1)$-matrix which  is combinatorially skew-symmetric about both the main diagonal  and the Hankel diagonal, and which has only 0s on both its main diagonal and its Hankel diagonal.
%and which has all zeros on its Hankel diagonal.
Let $D_n$ be the $n\times n$ $(0,1)$-matrix with 1s on the main diagonal  and on the Hankel diagonal and 0s elsewhere.
Thus  the $n\times n$ $(0,1)$-matrix $T=[t_{ij}]$ is a combinatorially skew-Hankel tournament if and only if
$T^t=J_n-D_n-T$ and $T^h=J_n-D_n-T$.  Thus, if $T$ is a combinatorially skew-Hankel tournament, then
$T^t=T^h$ or, put another way,  $T^{th}=T$.
In terms of its entries, $T$ is a  combinatorially skew-Hankel tournament if and only if
\begin{eqnarray*}
t_{ii}&=&t_{i,n+1-i}=0\mbox{ for all $i$,}\\
t_{ji}&=&1-t_{ij} \mbox{ for all $j\ne i,n+1-i$,}\\
t_{n+1-j,n+1-i} =1-t_{ij} \mbox{ for all $j\ne i,n+1-i$}.\\
\end{eqnarray*}
 Thus if $T$ is a  combinatorially skew-Hankel tournament, then  $t_{ij}=t_{n+1-i,n+1-j}$
 for all $i$ and $j$.
 Strictly  speaking, a combinatorially skew-Hankel tournament is not a tournament because skew-symmetry does not hold for symmetrically opposite elements on the Hankel diagonal. Viewing $T$ with respect to either its main diagonal or its Hankel diagonal, we have a round-robin tournament in which for each $i$, players $i$ and $n+1-i$  do not play a game. The set of all combinatorially skew-Hankel tournaments with a prescribed score vector is denoted by ${\mathcal T}_{H^*}(R)$.

\begin{example}{\rm The following matrix is a $5\times 5$ combinatorially skew-Hankel matrix:
\[\left[\begin{array}{c|c|c|c|c}
0&\cellcolor[gray]{0.8}1&\cellcolor[gray]{0.8}0&\cellcolor[gray]{0.8}0&0\\ \hline
0&0&\cellcolor[gray]{0.8}1&0&1\\ \hline
1&0&0&0&1\\ \hline
1&0&1&0&0\\ \hline
0&0&0&1&0\end{array}\right]\]
where  the four shaded entries determine all the  other entries off the main and Hankel diagonals.
	Note that, in general, a combinatorially  skew-Hankel matrix is invariant under a rotation by 180 degrees.}
\end{example}

In Section 2  we first extend Landau's theorem for the existence of a tournament with a prescribed score vector by weakening the usual  monotonicity assumption. We then use this theorem  to obtain necessary and sufficient conditions for the existence of a loopy tournament with a prescribed score vector. We show that this result can also be obtained by associating with an $n\times n$ loopy tournament an $(n+1)\times (n+1)$ ordinary tournament and then using Landau's theorem directly. In Section 3 we consider Hankel tournaments and  obtain necessary and sufficient conditions for the existence of a Hankel tournament with a prescribed score vector. 
In Section 4 we consider combinatorially skew-Hankel tournaments and  obtain necessary and sufficient conditions for their existence  with a prescribed score vector. 
We also present algorithms for construction of these three  types of tournaments. 

Let $T_1$ and $T_2$ be two tournaments in ${\mathcal T}(R)$ considered as digraphs.
%and let the {\it distance} $d(T_1,T_2)$  from $T_1$ to $T_2$ be  defined as the number of edges of $T_2$ that are not edges of $T_1$. Since $T_1$ and $T_2$ have the same number of edges, $d(T_1,T_2)=d(T_2,T_1)$. 
It is a basis fact (see e.g. \cite{Br,Ry}) that one may go  from $T_1$ to $T_2$ by a finite sequence of  switches each of which reverses the directions of the edges of a 3-cycle (so resulting in another tournament in ${\mathcal T}(R)$). We identify  elementary moves that enable one to go from $T_1$ to $T_2$ when (i) $T_1,T_2\in {\mathcal T}^{\ell}(R)$, (ii) $T_1,T_2\in {\mathcal T}_H(R)$, and (iii) $T_1,T_2\in {\mathcal T}_{H^*}(R)$. In each case, the elementary moves always produce another tournament in the same class.

\section{Loopy Tournaments}

One of the most well-known theorems on  tournaments is Landau's theorem \cite{La} (see also \cite{Br,BF,Ma,Re,Ry,Th}) which asserts
that a vector $R=(r_1,r_2,\ldots,r_n)$ of nonnegative integers is the score vector of  an $n\times n$  tournament  if and only if
\begin{equation}\label{eq:landau}
\sum_{i\in J} r_i\ge {{|J|}\choose 2},\ (J \subseteq \{1,2,\ldots,n\}), \mbox{ with equality if $J=\{1,2,\ldots,n\}$}.
\end{equation}
If $T$ is a tournament with score vector $R$, then for each permutation matrix $P$, $PTP^t$ is a tournament with score vector $RP^t$. Thus without loss of generality, one can assume that $R$ is monotone nondecreasing, that is, $r_1\le r_2\le\cdots\le r_n$. With this assumption,  (\ref{eq:landau}) is equivalent to
\begin{equation}\label{eq:landau2}
\sum_{i=1}^k r_i\ge {k\choose {2}},\ (k=1,2,\ldots,n), \mbox{ with equality if $k=n$}.
\end{equation}
It is usually under the  assumption that $R$ is nondecreasing that Landau's theorem is proved.
But this  monotone assumption can be weakened to provide  a somewhat stronger theorem.  Let $k$ be a nonnegative integer. The vector $R$ is {\it $k$-nearly nondecreasing} provided there is a $(0,1,\ldots,k)$-vector $u=(u_1,u_2,\ldots,u_n)$  such that
$R-u$ is nondecreasing. Thus $R$ is $k$-nearly  nondecreasing if and only if  $r_j\ge r_i-k$ for $1\le i<j\le n$.  If $k=0$, then $R$ is nondecreasing. If $k=1$, then we use {\it nearly nondecreasing} (as used in \cite{TSM})  instead of  $k$-nearly nondecreasing.
For example, $(3,2,3,4,3,4)$
is nearly nondecreasing.

\begin{lemma}\label{lem:nearly} Let $R=(r_1,r_2,\ldots,r_n)$ 
 be a $2$-nearly nondecreasing  vector of nonnegative integers. Assume that  
   $R$  satisfies Landau's inequalities $(\ref{eq:landau2})$. Then   the nondecreasing rearrangement of $R$
also satisfies Landau's inequalities $(\ref{eq:landau2})$.
 \end{lemma}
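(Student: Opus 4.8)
The plan is to show that if the nondecreasing rearrangement $R' = (r'_1 \le r'_2 \le \cdots \le r'_n)$ of a $2$-nearly nondecreasing $R$ failed Landau's inequalities, then $R$ itself would fail them. Since $R$ satisfies $(\ref{eq:landau2})$, in particular $\sum_i r_i = \binom n2$, and the total sum is permutation-invariant, so $R'$ automatically has the right total sum. Thus the only way $R'$ can fail $(\ref{eq:landau2})$ is that there is some $k < n$ with $\sum_{i=1}^k r'_i < \binom k2$, i.e. $\sum_{i=1}^k r'_i \le \binom k2 - 1$. Fix the smallest such $k$ and call $m = r'_k$ the $k$-th smallest entry of $R$.

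The key combinatorial observation is that the $2$-nearly nondecreasing hypothesis tightly constrains how far the sorted order can differ from the original order. First I would record the elementary fact noted in the excerpt: $R$ is $2$-nearly nondecreasing iff $r_j \ge r_i - 2$ whenever $i < j$. From this, if $m$ is the value $r'_k$, I claim that \emph{every} entry of $R$ lying in a position after the last position (in the original order) that holds a value $\le m-1$ must itself be $\ge m-1$; more usefully, the set $J_0 = \{\, i : r_i \le m-1 \,\}$ of ``small'' indices, together with at most a bounded number of indices holding value exactly $m$ or $m+1$, forms an initial-like segment. The cleanest route: let $J$ be the set of the $k$ positions in the original vector $R$ that realize the $k$ smallest entries (break ties so that $J$ is a down-set under the near-order as much as possible), and compare $\sum_{i\in J} r_i$ with Landau's inequality for this $J$. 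Since $\sum_{i\in J} r_i = \sum_{i=1}^k r'_i \le \binom k2 - 1 < \binom k2$, the set $J$ \emph{violates} the general Landau inequality $(\ref{eq:landau})$ for $R$. But $(\ref{eq:landau2})$ for $R$ together with $2$-near-monotonicity is supposed to imply $(\ref{eq:landau})$ for all $J$ — so I must instead argue directly that $(\ref{eq:landau2})$ for $R$ already forces $\sum_{i=1}^k r'_i \ge \binom k2$.

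Concretely, I would proceed by a ``correcting'' argument on the prefix sums. Let $a_k = \sum_{i=1}^k r_i$ (prefixes of the original $R$) and $b_k = \sum_{i=1}^k r'_i$ (prefixes of the sorted $R$). Clearly $b_k \le a_k$ is false in general; rather $b_k \le a_k$ need not hold, but $b_k$ is the sum of the $k$ smallest entries, so $b_k \le \sum_{i\in I} r_i$ for every $k$-subset $I$, in particular $b_k \le a_k$. Wait — that \emph{does} give $b_k \le a_k$, which is the wrong direction. The right tool is the reverse: $b_k \ge (\text{sum of } k \text{ entries}) $ is false too. So instead I would use the hypothesis to show $b_k \ge a_k - c_k$ for an explicit small slack $c_k$ coming from the $(0,1,2)$-vector $u$, and then check $\binom k2 \le a_k - c_k$ is still forced. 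Precisely: write $R = R'' + u$ with $R''$ nondecreasing and $u \in \{0,1,2\}^n$; then the sorted order of $R$ and the order of $R''$ can be interleaved, and one shows the $k$ smallest entries of $R$ include all of $r''_1,\dots,r''_{k-2}$ plus at least ... — this is exactly the place where the constant $2$ enters, and pushing $k$ down by $2$ and using $\binom{k}{2} - \binom{k-2}{2} = 2k-3$ against the total slack $\le 2n$ closes the gap once one also uses the equality at $k=n$.

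I expect the main obstacle to be precisely this bookkeeping: controlling how much the prefix sums can drop under sorting in terms of the $(0,1,2)$ perturbation, and verifying that the Landau deficit one would need is incompatible with $(\ref{eq:landau2})$ holding for $R$ \emph{including} the terminal equality. The terminal equality is essential — without it one can perturb scores upward and destroy the conclusion — so the argument must play the violated inequality at some $k<n$ against the equality at $k=n$, i.e. look at the complementary sum $\sum_{i=k+1}^n r'_i = \binom n2 - b_k > \binom n2 - \binom k2$ and derive a contradiction with an upper bound on large scores forced by $2$-near-monotonicity. I would set this up as the cleanest form of the contradiction and expect the write-up to be short once the interleaving lemma for $u\in\{0,1,2\}^n$ is stated.
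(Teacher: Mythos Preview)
Your proposal is not a proof but a sequence of abandoned approaches. You correctly identify that one needs $b_k := \sum_{i=1}^k r'_i \ge \binom{k}{2}$ for the sorted vector, and you correctly note that the trivial inequality $b_k \le a_k := \sum_{i=1}^k r_i$ (the $k$ smallest entries sum to at most any $k$ entries) points the wrong way. But from that point on nothing is established: the decomposition $R = R'' + u$ with $u\in\{0,1,2\}^n$ is never exploited to produce a concrete lower bound $b_k \ge a_k - c_k$, the quantity $c_k$ is never defined, the claimed ``interleaving lemma'' is never stated, and the complementary-sum idea at the end is left as a heuristic. The step you yourself flag as ``the main obstacle'' is exactly the step that is missing.

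The paper's argument avoids all of this bookkeeping by a simple bubble-sort induction. One shows that a single adjacent transposition --- swapping $r_p$ and $r_{p+1}$ when $r_{p+1}=r_p-l$ with $l\in\{1,2\}$ --- preserves the Landau inequalities $(\ref{eq:landau2})$. The only prefix sum affected is the $p$-th, and if it were to drop below $\binom{p}{2}$ then one pins $\sum_{i=1}^p r_i$ within $l$ of $\binom{p}{2}$, bounds $r_p$ from above and $r_{p+1}$ from below using the neighbouring Landau inequalities, and obtains $l\ge 3$, a contradiction. Iterating such swaps sorts $R$ while remaining inside the Landau region. This localises the problem to a single index and three inequalities, which is why the constant $2$ falls out cleanly; your global approach via $u$ and complementary sums would have to reproduce the same arithmetic but with much more tracking, and as written it does not.
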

 
 \begin{proof} We prove the lemma by induction on the number $\alpha$ of pairs $(i,j)$ such that $i<j$ and $r_i-2\le r_j\le r_i-1$. If $\alpha=0$, then $R$ is nondecreasing and there is nothing to prove. Assume that $R$ is not nondecreasing, and suppose that $i$ and $j$ satisfy $i<j$ and $r_j\le r_i-1$. Then for each $k$ with  $i<k<j$, $r_k\ge r_i-2$. Hence there exists $p$ with $i\le p<j$ such that
 $r_{p+1}=r_p-l$ where $l=1\mbox{ or } 2$. We switch $r_p$ and $r_{p+1}$ thereby decreasing $\alpha$. Suppose that
 $(\sum_{i=1}^{p-1}r_i)+r_{p+1}< {p\choose 2}$. Since $\sum_{i=1}^pr_i\ge {p\choose 2}$,
 we conclude that $\sum_{i=1}^pr_i={p\choose 2}+h$ where $0\le h\le l-1$.
 Since $R$ satisfies Landau's inequalities (\ref{eq:landau2}), we calculate that
 \[r_{p+1}=\left(\sum_{i=1}^{p+1}r_i\right)-\left(\sum_{i=1}^p r_i\right)\ge {{p+1}\choose 2}-\left({p\choose 2}+h\right)=p-h.\]
 and 
 \[r_p =\left(\sum_{i=1}^pr_i\right) -\left(\sum_{i=1}^{p-1}r_i\right)\le \left({p\choose 2}+h\right)-{{p-1}\choose 2} =p-1+h.\]
 Thus $p-h+l \le r_{p+1}+l=r_p\le  p-1+h$, and  so $l\le 2h-1$. Since $h\le l-1$, we have $l\ge 3$, a contradiction.
 We conclude that  $(\sum_{i=1}^{p-1}r_i)+r_{p+1}\ge  {p\choose 2}$, and thus that  switching the order of $r_p$ and $r_{p+1}$ produces a vector  $R'$ that also satisfies Landau's inequalities. Since $\alpha$ is decreased, the lemma follows by induction.
 \end{proof}

\begin{theorem}\label{th:newlandau}
Let $R=(r_1,r_2,\ldots,r_n)$ be a $2$-nearly nondecreasing  vector of nonnegative integers. Then $R$ is the score vector of a tournament if and only if $(\ref{eq:landau2})$ holds.
\end{theorem}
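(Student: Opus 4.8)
The plan is to reduce everything to the classical Landau theorem quoted in (\ref{eq:landau}), using Lemma~\ref{lem:nearly} as the bridge. For \emph{necessity}, suppose $R$ is the score vector of a tournament $T$. Then $R$ satisfies (\ref{eq:landau}) for every $J\subseteq\{1,2,\ldots,n\}$; instantiating this at $J=\{1,2,\ldots,k\}$ for $k=1,2,\ldots,n$, and using that equality holds for $J=\{1,2,\ldots,n\}$, gives exactly (\ref{eq:landau2}). This half uses no assumption on the ordering of $R$.

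For \emph{sufficiency}, assume $R$ is $2$-nearly nondecreasing and satisfies (\ref{eq:landau2}). Let $\widetilde{R}$ be the nondecreasing rearrangement of $R$. By Lemma~\ref{lem:nearly}, $\widetilde{R}$ also satisfies (\ref{eq:landau2}). Since $\widetilde{R}$ is nondecreasing, (\ref{eq:landau2}) for $\widetilde{R}$ is equivalent to Landau's inequalities (\ref{eq:landau}) for $\widetilde{R}$, so the classical Landau theorem yields a tournament $\widetilde{T}$ with score vector $\widetilde{R}$.

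It remains to undo the rearrangement. Choose a permutation matrix $P$ with $\widetilde{R}P^t=R$. Then, as noted in the introduction, $P\widetilde{T}P^t$ is a tournament whose score vector is $\widetilde{R}P^t=R$, which proves the theorem.

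The only real content is the sufficiency direction, and essentially all of it is packaged into Lemma~\ref{lem:nearly}: the subtlety is that (\ref{eq:landau2}) constrains only the prefix sums in the \emph{given} order of $R$, which for a non-monotone vector is weaker than the full subset condition (\ref{eq:landau}), so one must check that sorting $R$ does not destroy (\ref{eq:landau2}). I expect the delicate point to be exactly the one already handled there — the local counting showing that a single adjacent transposition, chosen so that the drop $l$ equals $1$ or $2$, preserves (\ref{eq:landau2}), with the hypothesis that $R$ is $2$-nearly nondecreasing (rather than $k$-nearly for larger $k$) forcing the contradiction $l\ge 3$. Granting that lemma, the argument above encounters no further obstacle.
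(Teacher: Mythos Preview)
Your proof is correct and follows essentially the same route as the paper: necessity is immediate from the subset condition (\ref{eq:landau}), and sufficiency uses Lemma~\ref{lem:nearly} to sort $R$, then Landau's theorem to get a tournament with the sorted score vector, then a simultaneous row/column permutation to restore $R$. Your added commentary on why the $2$-nearly nondecreasing hypothesis is exactly what makes Lemma~\ref{lem:nearly} work is accurate and matches the lemma's proof.
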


\begin{proof} If $R$ is the score vector of a tournament, then (\ref{eq:landau2}) holds. Now assume that (\ref{eq:landau2}) holds. By Lemma \ref{lem:nearly} the nondecreasing rearrangement $R'$ of $R$ satisfies (\ref{eq:landau2}). Hence by Landau's theorem, there is a tournament $T$ with score vector $R$.
There is an $n\times n$ permutation matrix $P$ such that 
$R'=RP^t$. Then $PTP^t$ is a tournament with score vector $R$.\end{proof}

Let $T=[t_{ij}]$ be a loopy tournament with score vector $R=(r_1,r_2,\ldots,r_n)$. Since simultaneously permuting the rows and columns of $T$ results in another loopy tournament (with the same number of loops), we assume without loss of generality that $R$ satisfies $r_1\le r_2\le \cdots\le r_n$. The number of 1s on the main diagonal of $T$ is some integer, denoted as  $n-t$,  where  $0\le t\le n$.
The sum of all the entries of $T$ equals ${n\choose 2}+(n-t)$. Landau's theorem for the existence of a tournament with a prescribed score vector can be used to determine the existence of a loopy tournament with a prescribed score vector. If $a$ is an integer, then $a^+=\max\{0,a\}$.

\begin{theorem}\label{th:sm}
Let $R=(r_1,r_2,\ldots,r_n)$ be a vector of nonnegative integers with $r_1\le r_2\le\cdots\le r_n$.
Then there exists an loopy tournament with score vector $R$ if and only if there is an integer $t$ with $0\le t\le n$ such that
\begin{equation}
\label{eq:sm}
\sum_{i=1}^kr_i\ge {k\choose 2}+(k-t)^+ , (k=1,2,\ldots,n),  \mbox{ with equality when $k=n$}.\end{equation}
When these conditions are satisfied, the number of $1${\rm s} on the main diagonal of the loopy tournament is $n-t$ and these can be taken to be in the last $(n-t)$ positions on the main diagonal.
\end{theorem}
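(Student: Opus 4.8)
The plan is to reduce the question to one about ordinary tournaments by peeling off the main diagonal. If $T=[t_{ij}]$ is a loopy tournament with score vector $R$, let $U$ be the matrix obtained from $T$ by replacing every diagonal entry by $0$; then $U+U^t=J_n-I_n$, so $U$ is an ordinary tournament matrix, and its $i$th row sum is $r_i-t_{ii}$. Conversely, starting from an ordinary $n\times n$ tournament with score vector $(r_1-\epsilon_1,\dots,r_n-\epsilon_n)$ with each $\epsilon_i\in\{0,1\}$ and restoring $\epsilon_i$ on the diagonal yields a loopy tournament with score vector $R$ whose loops occupy exactly the positions $i$ with $\epsilon_i=1$. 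Hence, for a fixed integer $t$ with $0\le t\le n$, a loopy tournament with score vector $R$ having its loops precisely in the last $n-t$ diagonal positions exists if and only if there is an ordinary $n\times n$ tournament with score vector $R^{*}=(r_1,\dots,r_t,\,r_{t+1}-1,\dots,r_n-1)$.

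For sufficiency and for the final assertion, I would fix a $t$ satisfying $(\ref{eq:sm})$ and analyze $R^{*}$. First, $R^{*}$ is componentwise nonnegative: when $t<n$, $r_{t+1}=0$ would force $r_1=\dots=r_{t+1}=0$, contradicting $(\ref{eq:sm})$ at $k=t+1$ (and when $t=n$ we simply have $R^{*}=R$). Second---and this is the only place where the classical Landau theorem would not be enough---although $R^{*}$ may fail to be nondecreasing, it is $1$-nearly nondecreasing, since for $i<j$ one has $r^{*}_j\ge r_j-1\ge r_i-1\ge r^{*}_i-1$. As $R^{*}$ is then $2$-nearly nondecreasing, Theorem~\ref{th:newlandau} applies: $R^{*}$ is a tournament score vector if and only if it satisfies $(\ref{eq:landau2})$. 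Finally, because the $k$th partial sum of $R^{*}$ equals $\bigl(\sum_{i=1}^{k}r_i\bigr)-(k-t)^{+}$, the inequalities $(\ref{eq:landau2})$ for $R^{*}$ are literally $(\ref{eq:sm})$. This produces an ordinary tournament with score vector $R^{*}$, hence a loopy tournament with score vector $R$ whose $n-t$ loops lie in the last $n-t$ diagonal positions.

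For necessity, suppose a loopy tournament $T$ with score vector $R$ exists, let $n-t$ be its number of loops (so $0\le t\le n$), and put $e_i=t_{ii}$. Its off-diagonal part $U$ is an ordinary $n\times n$ tournament whose score vector has entries $v_i=r_i-e_i$. I would combine three elementary facts about $U$: $\sum_{i=1}^{n}v_i=\binom{n}{2}$; the Landau bound $\sum_{i=1}^{k}v_i\ge\binom{k}{2}$, obtained by taking $J=\{1,\dots,k\}$ in $(\ref{eq:landau})$; and $\sum_{i=1}^{k}e_i=(n-t)-\sum_{i=k+1}^{n}e_i\ge(k-t)^{+}$, which holds because only $n-k$ of the $0/1$ values $e_i$ remain after the first $k$. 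Adding the first two bounds gives $\sum_{i=1}^{k}r_i=\sum_{i=1}^{k}v_i+\sum_{i=1}^{k}e_i\ge\binom{k}{2}+(k-t)^{+}$ for $k=1,\dots,n$, with equality at $k=n$ (there $\sum v_i=\binom{n}{2}$ and $\sum e_i=n-t=(n-t)^{+}$). This is precisely $(\ref{eq:sm})$ for this value of $t$.

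The only delicate step is recognizing that $R^{*}$ is merely $1$-nearly nondecreasing, which is exactly why Theorem~\ref{th:newlandau} rather than classical Landau is needed; everything else is bookkeeping with partial sums. As the paper notes, one can alternatively encode an $n\times n$ loopy tournament with $n-t$ loops as an $(n+1)\times(n+1)$ ordinary tournament with one extra vertex of out-degree $t$ and apply classical Landau to that; this is short but does not obviously force the loops into the last positions.
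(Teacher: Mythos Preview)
Your proof is correct and follows essentially the same route as the paper: subtract $1$ from the last $n-t$ components of $R$ to form $R^{*}$ (the paper's $R'$), observe that this vector is nearly nondecreasing and that its Landau partial sums are exactly $(\ref{eq:sm})$, and invoke Theorem~\ref{th:newlandau}. You supply more detail than the paper on necessity and on the nonnegativity of $R^{*}$, but the core argument is identical.
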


\begin{proof} The condition (\ref{eq:sm}) is clearly necessary for the existence of a loopy tournament with score vector $R$ and $n-t$ loops.
Now suppose that (\ref{eq:sm}) holds.
We show that there is a loopy tournament with score vector $R$ for which the $(n-t)$ 1s on the main diagonal occur in those positions corresponding to the $(n-t)$ largest $r_i$. Let $R'=(r_1',r_2',\ldots,r_n')$ be obtained from $R$ by subtracting 1 
from $r_{t+1},r_{t+2},\ldots,r_n$. 
Then $R'$ is nearly  nondecreasing and (\ref{eq:sm}) implies that $\sum_{i=1}^k r_i'\ge {k\choose 2}$ for $1\le k\le n$ with equality for $k=n$. Hence by Theorem \ref{th:newlandau} there exists a tournament $T'$ with score vector $R'$. Replacing the 0s with 1s in the last $(n-t)$ positions on the main diagonal of $T'$ gives a loopy tournament with score vector $R$.\end{proof}

%giving preference to earlier $r_i$ in the case of ties in order  that $r_1'\le r_2'\le\cdots\le r_n'$. 
%If there is a tournament with score vector $R'$, then we get an SM-tournament with score vector $R$ by replacing those 0s on the main diagonal with 1s in those positions $i$ for which $r'-r_i=1$.
%We have $\sum_{i=1}^nr_i'={n\choose 2}$. Let $k$ be an integer with $1\le k\le n-1$. Then 
%\begin{equation}\label{eq:prime}
%\sum_{i=1}^k r_i'\ge {k\choose 2}\end{equation}
 %provided at most $(k-t)^+$ of $r_1,r_2,\ldots,r_k$ have been reduced by 1.
%This inequality follows if $r_i'=r_i$ for $1\le i\le t$ and $r_i'=r_i-1$ for $t+1\le i\le n$. The only other possibility is that there are  integers $p<t<q$ such that $r_{p}<r_{p+1}=\cdots=r_q<r_{q+1}$ in which case 
%\[r_p'=r_p, r_{p+1}'=r_{p+1}-1,\cdots, r_{p+(q-t)}'=r_{p+(q-t)}-1.\]
%Equation (\ref{eq:prime}) follows immediately from (\ref{eq:sm}) if $1\le k\le p$ or $q<k\le n$. If $p<k\le q$, 

 Let $R$ be a nearly nondecreasing vector of nonnegative integers  satisfing
(\ref{eq:landau2}).   By Theorem \ref{th:newlandau} there exists a tournament $T$ with score vector $R$. Simultaneously permuting the rows and columns of $T$ gives a tournament with score vector $R'$ obtained by applying the same permutation to $R$. Thus to find $T$ we can first permute  $R$ to get a nondecreasing $R'$, apply a known algorithm (for instance, the algorithm of Ryser \cite{Ry,Br})
to obtain a tournament $T'$  with score vector $R'$, and then simultaneously permute the rows and columns of $T'$ to obtain $T$ with score vector $R$. 

To construct a loopy tournament with nondecreasing score vector $R$ satisfying (\ref{eq:sm}), we can first subtract 1 from the largest $n-t$ components of $R$, and this results in a nearly nondecreasing vector $R'$ which by Theorem \ref{th:newlandau} satisfies (\ref{eq:landau2}). Then we can construct a tournament $T'$ with score vector $R'$ as just outlined above.  Replacing the 0s   in the last $(n-t)$ positions on the main diagonal with 1s results in an loopy tournament with score vector $R$.

The proof of Theorem \ref{th:sm} uses the strengthened form of Landau's theorem given in Theorem \ref{th:newlandau} in order to show existence of a loopy tournament with a prescribed nondecreasing score vector. For a given nondecreasing vector $R$ of nonnegative integers, we now establish a bijection between  ${\mathcal T}^{\ell}(R)$ and ${\mathcal T}(R')$ for a certain vector $R'$. This enables us to identify basic switches,  a sequence of which allows us  to go from a $T_1\in{\mathcal T}^{\ell}(R)$ to a $T_2\in {\mathcal T}^{\ell}(R)$ with
all intermediate matrices also in ${\mathcal T}^{\ell}(R)$.

\begin{theorem}\label{th:newloopy}
Let $R=(r_1,r_2,\ldots,r_n)$ be a nondecreasing vector of nonnegative integers such that there is an integer $t$ with $0\le t\le n$ such that
\[\sum_{i=1}^nr_i={n\choose 2}+(n-t).\]
Let $R'=(t,r_1,r_2,\ldots,r_n)$. Then there is a bijection between 
${\mathcal T}^{\ell}(R)$ and ${\mathcal T}(R')$. In particular, 
${\mathcal T}^{\ell}(R)\ne\emptyset$ if and only if ${\mathcal T}(R')\ne\emptyset$. Moreover,  these sets are nonempty if and only if $(\ref{eq:sm})$ holds.
\end{theorem}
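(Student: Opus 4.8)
The plan is to realise the bijection by absorbing the diagonal of an $n\times n$ loopy tournament into the arcs at a single new vertex, and then to read off the existence criterion by applying Landau's theorem directly to $R'$.

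For the bijection, given $T=[t_{ij}]\in{\mathcal T}^{\ell}(R)$ I would build an $(n+1)\times(n+1)$ matrix $\widehat T$ with rows and columns indexed by $0,1,\ldots,n$: put $\widehat t_{00}=0$, $\widehat t_{ii}=0$ for $1\le i\le n$, $\widehat t_{ij}=t_{ij}$ for distinct $i,j\in\{1,\ldots,n\}$, $\widehat t_{i0}=t_{ii}$, and $\widehat t_{0i}=1-t_{ii}$. One then checks that $\widehat T+\widehat T^t=J_{n+1}-I_{n+1}$ (so $\widehat T$ is an ordinary tournament), that row $i\ge1$ has sum $(r_i-t_{ii})+t_{ii}=r_i$, and that row $0$ has sum $\sum_{i=1}^n(1-t_{ii})=n-(n-t)=t$; hence $\widehat T\in{\mathcal T}(R')$. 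For the inverse, given $U=[u_{ij}]\in{\mathcal T}(R')$ (indices $0,\ldots,n$) I would set $t_{ij}=u_{ij}$ for $i\ne j$ and $t_{ii}=u_{i0}$, obtaining a loopy tournament whose row sums are those of $U$ restricted to $\{1,\ldots,n\}$, namely $R$, and whose number of loops is the indegree of vertex $0$ in $U$, namely $n-t$. The two maps are visibly mutually inverse, so ${\mathcal T}^{\ell}(R)$ and ${\mathcal T}(R')$ are in bijection and in particular one is nonempty precisely when the other is.

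For the last assertion I would use Landau's theorem in the subset form $(\ref{eq:landau})$ applied to $R'$. The equality it demands for the full index set is $t+\sum_{i=1}^n r_i={{n+1}\choose2}$, which is exactly the hypothesis $\sum_{i=1}^n r_i={n\choose2}+(n-t)$, so it is automatic. Since $r_1\le\cdots\le r_n$, for a $k$-subset $J$ of $\{0,\ldots,n\}$ the minimum of $\sum_{i\in J}r'_i$ is $r_1+\cdots+r_k$ if $0\notin J$ (only possible for $k\le n$) and $t+r_1+\cdots+r_{k-1}$ if $0\in J$. So $(\ref{eq:landau})$ for $R'$ reduces to requiring, for $k=1,\ldots,n$, both $\sum_{i=1}^k r_i\ge{k\choose2}$ and $t+\sum_{i=1}^{k-1}r_i\ge{k\choose2}$; the second family, rewritten using ${k\choose2}={{k-1}\choose2}+(k-1)$ and reindexed, is $\sum_{i=1}^j r_i\ge{j\choose2}+(j-t)$ for $j=0,\ldots,n$, with $j=0$ trivial and $j=n$ an equality forced by the hypothesis. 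Combining the two families index by index gives precisely $(\ref{eq:sm})$, with its $k=n$ equality again supplied by the hypothesis. Hence ${\mathcal T}(R')\ne\emptyset$ iff $(\ref{eq:sm})$ holds, and by the bijection so does ${\mathcal T}^{\ell}(R)\ne\emptyset$.

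I expect the bookkeeping of the first part — confirming the two constructions are inverse and that scores and loop counts behave as claimed — to be routine. The one genuine point of care is that $R'=(t,r_1,\ldots,r_n)$ need not be nondecreasing, nor even $2$-nearly nondecreasing (e.g. when $t$ is large and $r_1$ small), so one cannot feed $R'$ into the monotone inequalities $(\ref{eq:landau2})$ or into Theorem~\ref{th:newlandau}; the ``minimum over $k$-subsets'' observation is what makes the general form $(\ref{eq:landau})$ usable and matches it cleanly to $(\ref{eq:sm})$.
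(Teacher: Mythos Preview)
Your bijection is exactly the paper's: move the diagonal of $T$ into column $0$ and its complement into row $0$, and undo this for the inverse. That part is identical.

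For the nonemptiness criterion you take a different path. The paper inserts $t$ into its correct place among $r_1\le\cdots\le r_n$ to form a nondecreasing $R''$ and then verifies the monotone Landau inequalities $(\ref{eq:landau2})$ for $R''$ term by term, using the elementary bound $t+(k-t)^+\ge k$. You instead stay with $R'=(t,r_1,\ldots,r_n)$ in its given order and appeal to the subset form $(\ref{eq:landau})$, observing that the minimum of $\sum_{i\in J}r_i'$ over $k$-subsets $J$ is either $r_1+\cdots+r_k$ or $t+r_1+\cdots+r_{k-1}$ depending on whether $0\in J$; the two resulting families combine, via $\max\{0,k-t\}=(k-t)^+$, directly into $(\ref{eq:sm})$. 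Your remark that $R'$ need not be $2$-nearly nondecreasing, so Theorem~\ref{th:newlandau} is unavailable, is well taken and explains why the subset form is the natural tool for your route. Both arguments are correct; yours is a little more direct and makes the equivalence with $(\ref{eq:sm})$ transparent, while the paper's avoids quantifying over all subsets at the cost of a reordering step.
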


\begin{proof}
Let $T=[t_{ij}]$ be an $n\times n$  loopy tournament in ${\mathcal T}^{\ell}(R)$, and let an $(n+1)\times (n+1)$ matrix $T'=[t_{ij}']$ with rows and columns indexed by $0,1,2,\ldots,n$  be defined by
\[t_{ij}'=\left\{\begin{array}{cl}
t_{ij},&\mbox{if $1\le i,j\le n$ and $i\ne j$},\\
t_{ii},&\mbox{if  $j=0$ and $1\le i\le n$},\\
1-t_{jj},&\mbox{if $i=0$ and $1\le j\le n$},\\
0,&\mbox{if $0\le i=j\le n$.}\end{array}\right.\]
Thus $T'$ is obtained from $T$ by horizontally moving the entries on the main diagonal to column 0,  vertically moving 1 minus the entries on the main diagonal to row 0, and putting 0s everywhere on the main diagonal. Since $T$ has $(n-t)$ 1s on its main diagonal, $T'$ is a tournament in ${\mathcal T}(R')$. This mapping is reversible and hence 
there is a bijection between 
${\mathcal T}^{\ell}(R)$ and ${\mathcal T}(R')$.

As remarked in the proof of Theorem \ref{th:sm}, the inequalities (\ref{eq:sm}) are necessary for 
${\mathcal T}(R')$, and hence ${\mathcal T}^{\ell}(R)$, to be  nonempty.
 Now assume that   (\ref{eq:sm}) holds.
 % that is, that
% \[\sum_{i=1}^k r_i\ge {k\choose {2}}+(k-t)^+\ (k=1,2,\ldots,n), \mbox{ with equality if $k=n$}.\]
 Let $R''=(r_1'',r_2'',\ldots,r_n'',r_{n+1}'')$ be obtained from  $R'$ by reordering its entries  to be nondecreasing.
Then $R''=(r_1,\ldots, r_p,t,r_{p+1},\ldots,r_n)$ where $r_p\le t\le r_{p+1}$.  We have
\[\sum_{i=1}^kr_i''=
\left\{\begin{array}{cl}
\sum_{i=1}^kr_i\ge {k\choose 2}+(k-t)^+\ge {k\choose 2},&\mbox{ if $k\le p$},\\
&\\
(\sum_{i=1}^k r_i) +t\ge {k\choose 2}+t+(k-t)^+,&
\mbox{ if $p+1\le k\le n+1$.} \end{array}\right.\]
Since $t+(k-t)^+\ge k$, we have that 
\[ {k\choose 2}+t+(k-t)^+\ge {k\choose 2}+k={{k+1}\choose 2}.\]
Thus by Landau's theorem, ${\mathcal T}(R'')$ is nonempty, and hence 
${\mathcal T}(R')$ and ${\mathcal T}^{\ell}(R)$ are nonempty.
\end{proof}

To construct a loopy tournament in ${\mathcal T}^{\ell}(R)$, we can use any algorithm to construct a tournament in ${\mathcal T}(R')$, and then use the bijection in Theorem \ref{th:newloopy}.

As remarked in the introduction, given any two tournaments $T_1'$ and $T_2'$ with the same score vector $R'$, then by reversing 3-cycles we can get from $T_1'$ to $T_2'$ where  all intermediate matrices are tournaments with
score vector $R'$. The operation of reversing a 3-cycle $i\rightarrow j\rightarrow k\rightarrow i$  in terms of matrices is that of switching  the $3\times 3$  matrix $T'[i,j,k]$  determined by rows and columns $i,j,\mbox{ and }k$  of $T'$ as shown below:
\begin{equation}\label{eq:interchange}
\begin{array}{c||c|c|c}
&i&j&k\\ \hline \hline
i&x&1&0\\ \hline
j&0&y&1\\ \hline
k&1&0&z\end{array} \rightarrow
\begin{array}{c||c|c|c}
&i&j&k\\ \hline \hline
i&x&0&1\\ \hline
j&1&y&0\\ \hline
k&0&1&z\end{array}.
\end{equation}
We call this a {\it $3$-cycle switch} and denote it by $\bigtriangleup_{i,j,k}$.  A 3-cycle switch is reversible and its inverse is 
$\bigtriangleup_{k,j,i}$.

We now take $R'$ and $R$ as given in the proof of Theorem \ref{th:newloopy}, and
using the bijection between ${\mathcal T}^{\ell}(R)$ and ${\mathcal T}(R')$ given there,  we identify the  switches  in ${\mathcal T}^{\ell}(R)$ corresponding to the $3$-cycle switches in ${\mathcal T}(R')$.
There are two possibilities: the switch in ${\mathcal T}(R')$ is in rows and columns (a)  $\{i,j,k\}\subseteq\{1,2,\ldots,n\}$ or (b) $\{0,i,j\}$ where
$\{i,j\}\subset\{1,2,\ldots,n\}$.

If (a), then the switches for ${\mathcal T}(R')$ and  ${\mathcal T}^{\ell}(R)$ are the same and hence  
$\bigtriangleup_{i,j,k}$ is an operation that maintains the tournament in the class ${\mathcal T}^{\ell}(R)$.

If (b), then the  switch $\bigtriangleup_{0,i,j}$ for ${\mathcal T}(R')$ is
\[
\begin{array}{c||c|c|c}
&0&i&j\\ \hline\hline
0&0&1&0\\ \hline
i&0&0&1\\ \hline
j&1&0&0\end{array}\rightarrow
\begin{array}{c||c|c|c}
&0&i&j\\ \hline\hline
0&0&0&1\\ \hline
i&1&0&0\\ \hline
j&0&1&0\end{array},\]
which, by moving the 1
  in column $0$ of the left matrix to column $j$ 
and moving the 1 in column 0 of the right matrix to column $i$, becomes
\[\begin{array}{c||c|c}
&i&j\\ \hline\hline
i&0&1\\ \hline
j&0&1\end{array}
\rightarrow
\begin{array}{c||c|c}
&i&j\\ \hline\hline
i&1&0\\ \hline
j&1&0\end{array}.\]
In terms of the digraph, the operation (b)    reverses the direction of  an edge  from a non-loop vertex $i$ to a loop-vertex $j$ and moves the loop from $j$ to $i$.
%$(i\rightarrow j \rightarrow j )\leftrightarrow (i\leftarrow i\leftarrow j)=\bigtriangleup_{j,i}.$
We call this operation an  {\it edge-loop switch} and denote it by $\hbox{$\rightarrow$}\kern -1.5pt\hbox{$\circ$}_{ij}$. This operation is also reversible  and  the inverse of $\hbox{$\rightarrow$}\kern -1.5pt\hbox{$\circ$}_{ij}$ is $\hbox{$\rightarrow$}\kern -1.5pt\hbox{$\circ$}_{ji}$.
Thus we have the following theorem.

\begin{theorem}\label{loopyswitch} Let $T_1$ and $T_2$ be two  loopy tournaments in ${\mathcal T}^{\ell}(R)$. Then $T_1$ can be brought to $T_2$ by a sequence of $3$-cycle switches and  edge-loop switches where each switch produces a loopy tournament in ${\mathcal T}^{\ell}
(R)$.
\end{theorem}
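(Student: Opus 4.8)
The plan is to leverage the bijection $\Phi\colon {\mathcal T}^{\ell}(R)\to{\mathcal T}(R')$ constructed in the proof of Theorem~\ref{th:newloopy}, together with the classical fact (cited in the introduction) that any two tournaments in ${\mathcal T}(R')$ are connected by a sequence of $3$-cycle switches staying inside ${\mathcal T}(R')$. The point is simply to translate that connectivity statement through $\Phi^{-1}$ and check that each $3$-cycle switch upstairs corresponds, downstairs, to either another $3$-cycle switch or an edge-loop switch, all of which keep us inside ${\mathcal T}^{\ell}(R)$.

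First I would set $T_1'=\Phi(T_1)$ and $T_2'=\Phi(T_2)$, both in ${\mathcal T}(R')$, and invoke the classical theorem to obtain a chain $T_1'=U_0', U_1',\ldots,U_m'=T_2'$ in ${\mathcal T}(R')$ in which $U_{\ell+1}'$ is obtained from $U_\ell'$ by a single $3$-cycle switch $\bigtriangleup_{a,b,c}$ on some $3$-cycle $a\to b\to c\to a$. Pull this chain back by setting $U_\ell=\Phi^{-1}(U_\ell')\in{\mathcal T}^{\ell}(R)$, so $U_0=T_1$ and $U_m=T_2$. It remains to describe each step $U_\ell\to U_{\ell+1}$ as a switch of the claimed type. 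This is exactly the case analysis already carried out in the paragraphs preceding the theorem: if $\{a,b,c\}\subseteq\{1,2,\ldots,n\}$ (case (a)), then since $\Phi$ is the identity on the principal $\{1,\ldots,n\}$-submatrix, the operation is literally the $3$-cycle switch $\bigtriangleup_{a,b,c}$ on $U_\ell$; if one of the indices is $0$, say the cycle is $0\to i\to j$ with $\{i,j\}\subset\{1,\ldots,n\}$ (case (b)), then unwinding the definition of $\Phi^{-1}$ — column $0$ entries become loops, row $0$ entries become one-minus-loops — shows the switch becomes precisely the edge-loop switch $\hbox{$\rightarrow$}\kern -1.5pt\hbox{$\circ$}_{ij}$, reversing the edge from the non-loop vertex $i$ to the loop-vertex $j$ and transferring the loop. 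Each $U_{\ell+1}$ lies in ${\mathcal T}^{\ell}(R)$ because $U_{\ell+1}'\in{\mathcal T}(R')$ and $\Phi^{-1}$ lands in ${\mathcal T}^{\ell}(R)$; equivalently one checks directly that both switch types preserve every row sum.

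The only mild subtlety — and the step I expect to require the most care — is the bookkeeping in case (b): one must confirm that the $3\times 3$ pattern forced on rows/columns $\{0,i,j\}$ by the presence of a $3$-cycle through vertex $0$ is exactly the left-hand configuration displayed in the text (in particular that the diagonal $(0,0)$ entry is $0$, which it is by construction of $T'$), so that after transporting the column-$0$ entries back to the diagonal one really does obtain the two $2\times 2$ blocks $\left[\begin{smallmatrix}0&1\\0&1\end{smallmatrix}\right]\to\left[\begin{smallmatrix}1&0\\1&0\end{smallmatrix}\right]$ describing the edge-loop switch, rather than some other orientation. Once that identification is made, and one notes symmetrically that the inverse switch $\bigtriangleup_{c,b,a}$ (resp.\ $\hbox{$\rightarrow$}\kern -1.5pt\hbox{$\circ$}_{ji}$) handles the reverse direction, the theorem follows immediately: the pulled-back chain $T_1=U_0, U_1,\ldots,U_m=T_2$ is the desired sequence of $3$-cycle switches and edge-loop switches, each producing a member of ${\mathcal T}^{\ell}(R)$.
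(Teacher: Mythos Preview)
Your proposal is correct and follows exactly the paper's own argument: the paper proves the theorem by the discussion immediately preceding it, using the bijection of Theorem~\ref{th:newloopy} together with the classical $3$-cycle connectivity of ${\mathcal T}(R')$, and then carrying out precisely the case analysis (a) and (b) you describe to identify the corresponding switches downstairs.
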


To conclude this section, we remark that, any $n\times n$  tournament 
can always be collapsed to an $(n-1)\times (n-1)$  loopy tournament using any row and column $i$ with the 1s in column $i$ moved horizontally to the positions on the main diagonal, and then deleting row and column $i$. 

\section{Hankel Tournaments}

   We first characterize the score vectors of Hankel tournaments.
 Let $T$ be a Hankel tournament, and let $i$ be an integer with $1\le i\le n$. 
 Since $T$ is a tournament, row $i$ of $T$ determines column $i$, and since $T$ is a Hankel tournament, it also determines row and column $n+1-i$.
 %The  $2\times 2$ principal submatrix $T[\{i,n+1-i\}]$ of $T$ 
%determined by rows and columns $i$ and $n+1-i$ is of the form
%[\left[\begin{array}{cc}
%0&b\\1-b&0\end{array}\right],\]
%where the positions of $b$ and $1-b$ are on the Hankel diagonal. 
We illustrate this important property in the next example.

\begin{example}{\rm
Let $T$ be an $8\times 8$ Hankel tournament with row 3 given. 
%as shown in
%\[\left[\begin{array}{c|c|c|c|c|c|c|c}
%0&&&&&&&\cellcolor[gray]{0.8} \\ \hline
%&0&&&&&\cellcolor[gray]{0.8}&\\ \hline\hline
%a&b&0&c&d&\cellcolor[gray]{0.8}e&f&g\\ \hline\hline
%&&&0 &\cellcolor[gray]{0.8}&&&\\ \hline
%&&&\cellcolor[gray]{0.8}&0&&&\\ \hline
%&&\cellcolor[gray]{0.8}&&&0&&\\ \hline
%&\cellcolor[gray]{0.8}&&&&&0&\\ \hline
%\cellcolor[gray]{0.8}&&&&&&&0\end{array}\right]\]
Then the entries in column 3, and row and column $8+1-3=6$ are determined as shown in
\[\left[\begin{array}{c|c||c||c|c||c||c|c}
0&&1-a&&&\phantom{11}g\phantom{11}&&\cellcolor[gray]{0.8}\\ \hline
&0&1-b&&&f&\cellcolor[gray]{0.8}&\\ \hline\hline
a&b&0&c&d&\cellcolor[gray]{0.8}e&f&g\\ \hline\hline
&&1-c&0&\cellcolor[gray]{0.8}&d&&\\ \hline
&&1-d&\cellcolor[gray]{0.8}&0&c&&\\ \hline\hline
1-g&1-f&\cellcolor[gray]{0.8}1-e&1-d&1-c&0&1-b&1-a\\ \hline\hline
&\cellcolor[gray]{0.8}&1-f&&&b&0&\\ \hline
\cellcolor[gray]{0.8}&&1-g&&&a&&0\end{array}\right],\]
where the  $0$s on the main diagonal have been inserted and the Hankel diagonal has been shaded.}
\end{example}

Let  $R=(r_1,r_2,\ldots,r_n)$ be the score vector of $T$, and let $S=(s_1,s_2,\ldots,s_n)$ be the column sum vector of $T$.  We have $\sum_{i=1}^nr_i={n\choose 2}$.  Since $T$ is symmetric about the Hankel diagonal,  $r_i=s_{n+1-i}$. Since $T$ is a  tournament, 
$r_{n+1-i}=(n-1)-s_{n+1-i}=(n-1)-r_i$ and thus the score vector $R$ satisfies the {\it Hankel property}
\begin{equation}\label{eq:Hankelprop}
r_i+r_{n+1-i}=n-1 \mbox{ for all $i=1,2,\ldots,n$.}\end{equation}
Thus, if $n$ is odd, 
\[r_{(n+1)/2}=\frac{n-1}{2}.\]
% Simultaneously permuting the rows and columns of a Hankel SM-tournament results in another Hankel SM-tournament. Thus we may 
%Hankel SM-tournament,  the $j$ column sum  of $T$ is  $(n-1-r_1,n-1-r_2,\ldots,n-1-r_n)$,  $T^h=T$ implies that $r_i+r_{n+1-i}=n-1$ for $i=1,2,\ldots,n$. 
%Thus if $R$ is the score vector of an $n\times n$  Hankel tournament, then $R$ is of the form
%\[(n \mbox{ even })\;  R=(r_1,\ldots,r_{n/2},r_{n/2},\ldots, r_1)\]
%and 
%\[ (n \mbox{ odd })\; R=(r_1,\ldots,r_{(n-1)/2},r_{(n+1)/2},r_{(n-1)/2}\ldots, r_1).\]
%We show that this is the only condition needed, in addition to the corresponding Landau's conditions,. in order to to guarantee existence of a Hankel tournament with score vector $R$. 

We next  show that  there is no loss of generality in assuming that $R$ is nondecreasing.
%\[r_1\le r_2\le\cdots\le r_{\lfloor n/2\rfloor}\le (n-1)/2.\]
%Since $r_i+r_{n+1-i}=n-1$ for all $i$, this gives a score vector of the form
%\[(n\mbox{ even) }\; r_1'\le r_2'\le r_{n/2}',n-1-r_{n/2}'\ge \cdots\ge n-1-r_2'\ge n-1-r_1'\]
%We note that simultaneously permuting the rows and columns of a matrix does not change the trace of the matrix.

\begin{lemma}\label{lem:monotone}
Let $R=(r_1,r_2,\ldots,r_n)$ be a vector of nonnegative integers, and let $R'=(r_1',r_2',\ldots,r_n')$  be obtained from $R$ by rearranging its entries so that
$r_1'\le r_2'\le\cdots\le r_n'$. Then there exists a Hankel tournament  $T$ with score vector $R$ if and only if there exists a Hankel tournament $T'$ with score vector $R'$. Moreover, $T'$ is obtained by  
simultaneously permuting the rows and columns of $T$. 
\end{lemma}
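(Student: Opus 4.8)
The plan is to realize both implications by a single conjugation $T\mapsto PTP^t$ that sorts the score vector, and the one thing to be careful about is that conjugating a Hankel tournament by an \emph{arbitrary} permutation need not preserve $T^h=T$. Write $L$ for the $n\times n$ permutation matrix with $1$'s on the anti-diagonal (so $\widetilde T=TL$ in the notation above); then $A^h=LA^tL$ for every $n\times n$ matrix $A$, hence
\[
(PTP^t)^h \;=\; L\,(PT^tP^t)\,L \;=\; (LPL)\,T^h\,(LP^tL),
\]
which, since $T^h=T$, equals $PTP^t$ exactly when $P$ commutes with $L$. Commutation $PL=LP$ is equivalent to the underlying permutation $\pi$ being \emph{centrosymmetric}, i.e.\ $\pi(n+1-i)=n+1-\pi(i)$ for all $i$. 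So the lemma reduces to the following: there is a centrosymmetric permutation $\pi$ with $r_{\pi(i)}=r'_i$ for all $i$. Granting this, with $P$ its permutation matrix and $T\in\mathcal{T}_H(R)$ arbitrary, the matrix $T':=PTP^t$ is again a tournament (conjugation by a permutation preserves $T+T^t=J_n-I_n$), satisfies $(T')^h=T'$ by the display, and has score vector $R'$; since $\pi^{-1}$ is also centrosymmetric, the converse follows with $P^{-1}$ in place of $P$. Thus $\mathcal{T}_H(R)\neq\emptyset$ if and only if $\mathcal{T}_H(R')\neq\emptyset$, with $T'$ obtained from $T$ by simultaneously permuting rows and columns, as claimed.

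The construction of the centrosymmetric sorting permutation is where the Hankel property (\ref{eq:Hankelprop}) enters; we may and do assume $R$ satisfies it (it is necessary for $\mathcal{T}_H(R)\neq\emptyset$, and one checks it then also holds for $R'$). Indeed, the involution $v\mapsto n-1-v$ is a bijection of the multiset of entries of $R$ — if $r_i=v$ then $r_{n+1-i}=n-1-v$, so $v$ and $n-1-v$ occur equally often — and since it reverses order it carries the $i$-th smallest entry to the $i$-th largest, giving $r'_i+r'_{n+1-i}=n-1$. Now partition $\{1,\dots,n\}$ into the mirror pairs $\{i,n+1-i\}$ with $i<n+1-i$, together with the singleton $(n+1)/2$ when $n$ is odd (which carries the value $(n-1)/2$ in both $R$ and $R'$). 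Each mirror pair of $R$ carries an unordered value-pair $\{v,n-1-v\}$, and the number of mirror pairs of $R$ carrying a fixed such value-pair depends only on the multiplicity of $v$ in the (common) multiset of entries; hence this count agrees for $R$ and for $R'$.

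Given this agreement, build $\pi$ mirror pair by mirror pair: for each value-pair $\{v,n-1-v\}$ fix a bijection between the mirror pairs of $R'$ carrying it and those of $R$ carrying it, and on a matched pair $\{i,n+1-i\}\mapsto\{j,n+1-j\}$ define $\pi(i),\pi(n+1-i)$ to be $j,n+1-j$ in whichever order makes $r_{\pi(i)}=r'_i$ and $r_{\pi(n+1-i)}=r'_{n+1-i}$ (possible since the value-sets coincide; when $v=n-1-v$ either order works), and set $\pi((n+1)/2)=(n+1)/2$ for $n$ odd. By construction $\pi$ is a bijection, it respects every mirror pair and so is centrosymmetric, and $r_{\pi(i)}=r'_i$ throughout; then $P:=P_\pi$ and $T':=PTP^t$ finish the proof as in the first paragraph.

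I expect the genuine work to be the equinumerosity claim at the end of the second paragraph — that for each value-pair the mirror pairs of $R$ and of $R'$ carrying it match up in number — together with the (easy but fussy) handling of the central index when $n$ is odd. Everything else is routine: the identity $A^h=LA^tL$ and the commutation computation, the fact that $\pi^{-1}$ is centrosymmetric when $\pi$ is, and that conjugation by a permutation matrix preserves the tournament conditions while permuting the score vector.
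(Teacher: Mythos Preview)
Your proof is correct, and the core idea---find a permutation that both sorts $R$ and preserves the Hankel structure---is the same as the paper's. The execution, however, is genuinely different.

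The paper proceeds constructively by transpositions: first swap rows and columns $i\leftrightarrow n+1-i$ wherever $r_i>r_{n+1-i}$, so that the smaller half of each mirror pair sits in the first half of the vector; then sort the first half by \emph{double} transpositions $i\leftrightarrow j$ together with $(n+1-i)\leftrightarrow(n+1-j)$, checking by hand that each such move keeps $T$ a Hankel tournament. No explicit criterion on the permutation is stated; the argument is entirely in terms of these two generating moves.

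You instead isolate the algebraic condition up front: $PTP^t$ stays Hankel exactly when $P$ commutes with the anti-identity $L$, i.e.\ when the permutation is centrosymmetric. You then build a single centrosymmetric sorting permutation globally, by matching mirror pairs of $R$ and $R'$ that carry the same value-pair $\{v,n-1-v\}$. This is cleaner and makes the role of the Hankel property $r_i+r_{n+1-i}=n-1$ completely transparent (it is exactly what guarantees the matching exists). The paper's transpositions are, of course, precisely generators of the centrosymmetric permutations, so the two arguments are equivalent underneath; your framing just makes that structure explicit.

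One remark: both the paper and you tacitly assume $R$ satisfies the Hankel property. As literally stated the biconditional can fail otherwise (e.g.\ $R=(0,2,3,1)$ has no Hankel tournament while its sort $R'=(0,1,2,3)$ does), but in context this is harmless since the lemma is only ever applied to score vectors of Hankel tournaments.
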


\begin{proof}
Let $T$ be a Hankel tournament with score vector $R$.
We first note that if for some $i$, we interchange rows $i$ and $(n+1-i)$
and simultaneously interchange columns $i$ and $(n+1-i)$ of $T$, then the resulting matrix is a Hankel tournament
with score vector obtained from $R$ by interchanging $r_i$ and $r_{n+1-i}$.
Thus using a sequence of pairwise interchanges of this sort, we may assume that $r_i\le r_{n+1-i}$ for $1\le i\le \lfloor n/2\rfloor$.  Since $r_i+r_{n+1-i}=n-1$, we have
\[r_i\le \frac{n-1}{2}\le r_{n+1-i},\mbox{ where, if $n$ is odd, $r_{(n+1)/2}=\frac{n-1}{2}$.}\]
If  for some $i$ and $j$ with $1\le i<j\le \lfloor n/2\rfloor$, we have
$r_i>r_j$, then we also have that $\lceil n/2\rceil\le n+1-j<n+1-i\le n$ and $r_{n+1-j}>r_{n+1-i}$.
Thus we may interchange rows  $i$ and $j$ and columns $i$ and $j$, and also interchange rows  $(n+1-i)$ and $(n+1-j)$ and columns $(n+1-i)$ and $(n+1-j)$, and obtain a Hankel tournament with row sum vector obtained from $R$ by interchanging $r_i$ and $r_j$ and by interchanging $r_{n+1-i}$ and $r_{n+1-j}$. 
Thus by a sequence of double pairwise interchanges of this sort, we are able to get a Hankel tournament whose row sum vector is a monotone rearrangement of $R$.
The converse follows in a similar way.
\end{proof}

Because of Lemma \ref{lem:monotone}, for the existence of Hankel tournaments  with score vector $R$, it suffices to assume that $R$ is nondecreasing. We show that the only condition needed in addition to Landau's inequalities is that $R$ satisfy the Hankel property.

\begin{theorem}\label{th:Hankel}
Let $R=(r_1,r_2,\ldots,r_n)$ be a vector of nonnegative integers with $r_1\le r_2\le \cdots\le r_n$.
Then there exists a Hankel tournament with score vector $R$ if and only if
\begin{equation}\label{eq:Hankel1}
r_i+r_{n+1-i}=n-1,\quad (i=1,2,\ldots,n) \end{equation}
and
\begin{equation}\label{eq:Hankel2}
\sum_{i=1}^k r_i\ge {k\choose {2}},\ (k=1,2,\ldots,n), \mbox{ with equality if $k=n$}.
\end{equation}
\end{theorem}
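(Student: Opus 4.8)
\emph{Necessity} is already contained in the discussion preceding the statement: every tournament satisfies Landau's inequalities $(\ref{eq:Hankel2})$, and for a Hankel tournament the identities $r_i=s_{n+1-i}$ and $r_{n+1-i}=(n-1)-s_{n+1-i}$ give the Hankel property $(\ref{eq:Hankel1})$. So only sufficiency has to be proved, and the plan is an induction on $n$ in steps of two, with the trivial cases $n=1,2$ as the base. In the inductive step, let $R$ be nondecreasing and satisfy $(\ref{eq:Hankel1})$ and $(\ref{eq:Hankel2})$; from $r_1+r_n=n-1$ and $r_1\le r_n$ we get $r_1\le\frac{n-1}{2}$. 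In a Hankel tournament, row $1$ determines column $1$ by the combinatorial skew-symmetry and it determines row and column $n$ by the Hankel symmetry, but it imposes no relation among the entries with both indices in $\{2,\dots,n-1\}$; hence deleting rows and columns $1$ and $n$ leaves an $(n-2)\times(n-2)$ Hankel tournament. Conversely, to build $T$ I must choose a $(0,1)$-vector $(0,t_2,\dots,t_n)$ for row $1$ with $\sum_{j\ge 2}t_j=r_1$ so that the score vector $R'$ of the deleted principal submatrix again satisfies the hypotheses for order $n-2$, and then invoke the inductive hypothesis, using Lemma \ref{lem:monotone} to pass to the nondecreasing rearrangement of $R'$. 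A short computation shows that $R'$ automatically satisfies the Hankel property for order $n-2$ and that the entries of $R'$ sum to $\binom{n-2}{2}$, whatever choice is made for row $1$; so the only thing at issue is Landau's inequalities for $R'$.

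The choice of row $1$ is governed by its effect on the symmetric pairs. For a pair $\{i,n+1-i\}$ with $2\le i<n+1-i\le n-1$, the two reduced entries become $(r_i-1,\,r_{n+1-i}-1)$ when $t_i=t_{n+1-i}$, they become $(r_i,\,r_{n+1-i}-2)$ when $t_i=1,\ t_{n+1-i}=0$, and they become $(r_i-2,\,r_{n+1-i})$ when $t_i=0,\ t_{n+1-i}=1$; the numbers of $1$s in row $1$ consumed are $2$ or $0$, then $1$, then $1$, and the total usable budget $\sum_{2\le j\le n-1}t_j$ can be set to $r_1$ or $r_1-1$ by the choice of $t_n$. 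Since $r_i\le r_{n+1-i}$, the option $(r_i,\,r_{n+1-i}-2)$ is the one that keeps the small entries of $R'$ as large as possible, so the plan is to spend the budget on this option for the pairs of smallest index, leave every remaining pair on the option $(r_i-1,\,r_{n+1-i}-1)$, and, in the small or odd-$n$ cases where $r_1$ exceeds the number of available pairs, absorb the surplus on the budget-$2$ version of that same (effect-preserving) option. With this choice, $R'$ is $2$-nearly nondecreasing: listed nondecreasingly within each of its two halves, it drops only across the single junction between the halves, and by at most $2$.

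The main obstacle is then the verification that, for this choice of row $1$, $R'$ actually satisfies Landau's inequalities $(\ref{eq:landau2})$ in every prefix of that ``nondecreasing within halves'' order; once this is done, Lemma \ref{lem:nearly} carries the inequalities over to the monotone rearrangement of $R'$, Theorem \ref{th:newlandau} is not even needed separately because the inductive hypothesis applies to the rearranged vector, and Lemma \ref{lem:monotone} lets one un-rearrange the resulting order-$(n-2)$ Hankel tournament before reattaching row and column $1$ and row and column $n$. The verification itself is a bookkeeping argument: given a prefix $\{1,\dots,k\}$ one splits into cases according to how $k$ lies relative to the modified pairs and the center, pairs up terms $r_i$ and $r_{n+1-i}$ via $r_i+r_{n+1-i}=n-1$, and reduces to the original inequalities $(\ref{eq:Hankel2})$ for $R$; this is exactly the place where the hypothesis that $R'$ need only be $2$-nearly (rather than exactly) nondecreasing is essential, which is the motivation for having proved Theorem \ref{th:newlandau}. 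Checking that no conflict arises when reattaching the deleted border (the entries in rows/columns $1$ and $n$ are all pinned by row $1$, and the skew- and Hankel-relations among them are mutually consistent) is routine.

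Finally, there is an essentially equivalent alternative available when $n=2m$ is even: writing $T$ in $m\times m$ blocks $\left(\begin{smallmatrix}A&B\\ C&D\end{smallmatrix}\right)$, the conditions $T^t=J_n-I_n-T$ and $T^h=T$ force $A$ to be an $m\times m$ tournament, $B$ to satisfy $B^h=B$, $C=J_m-B^t$, and $D=A^h$, and a computation then shows the Hankel property is automatic with $r_i$ equal to the $i$th row sum of $A$ plus the $i$th row sum of $B$ for $i\le m$. The existence problem thereby reduces to splitting $(r_1,\dots,r_m)$ as the sum of a tournament score sequence and the row-sum sequence of an anti-diagonal-symmetric $(0,1)$-matrix; the odd case reduces to this one by peeling off the (forced) middle row. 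In this formulation the obstacle reappears as a Gale--Ryser-type realizability check for the second summand, so I would expect the row-peeling induction above to be the cleaner route.
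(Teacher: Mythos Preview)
Your approach is genuinely different from the paper's. You propose a direct induction that peels off the border (rows and columns $1$ and $n$) and then invokes the inductive hypothesis on the $(n-2)\times(n-2)$ core; this is essentially the construction the paper presents \emph{later} as its Hankel Algorithm, and the ``bookkeeping'' you defer is precisely the content of the paper's Theorem~\ref{th:alghankel}. The paper's own proof of Theorem~\ref{th:Hankel} does something else entirely: a minimal-counterexample argument with two cases. If every Landau inequality with $k<n$ is strict, it perturbs $R$ to $R'=(r_1-1,r_2,\dots,r_{n-1},r_n+1)$, uses minimality of $r_1$ to obtain a Hankel tournament for $R'$, and repairs it by a local switch (exploiting $r_n'-r_1'\ge 2$ to find a column $p$ with $t_{1p}=0$, $t_{np}=1$, then flipping six or eight entries symmetric about both diagonals). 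If instead equality holds at some $k\le n/2$, the Hankel property forces equality also at $n-k$, and $T$ is built in block form $\left[\begin{smallmatrix}T_1&O&O\\ J&X&O\\ J&J&T_1^h\end{smallmatrix}\right]$ with $T_1$ an ordinary $k\times k$ tournament and $X$ an $(n-2k)\times(n-2k)$ Hankel tournament supplied by minimality of $n$. Your route is constructive and meshes with the algorithm; theirs is shorter and self-contained, since it never needs Lemma~\ref{lem:nearly} or the $2$-nearly-nondecreasing machinery, and the block decomposition in the equality case is a structural idea absent from your sketch. The main caveat on your side is that the deferred verification is not a triviality: in the paper (proof of Theorem~\ref{th:alghankel}) it requires a three-case computation for even $n$, including a strengthened Landau-type inequality $\sum_{i=1}^{l}r_i\ge\binom{l}{2}+(l-n+r_1+1)$ in the last range, plus a separate reduction for odd $n$---so it is the heart of the matter, not a routine check.
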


\begin{proof}
We know that (\ref{eq:Hankel1}) and (\ref{eq:Hankel2}) are necessary for the existence of a Hankel tournament with score vector $R$. 
Suppose, to the contrary, that the converse is false. Then clearly $n\ge 3$. We then choose an $R=(r_1,r_2,\ldots,r_n)$ satisfying  (\ref{eq:Hankel1}) and (\ref{eq:Hankel2})  for which a  Hankel tournament with score vector $R$ does not exist, where $n$ is minimum and for this $n$, $r_1$ is minimum.
We consider two cases.

\smallskip\noindent
{\it Case} $1$: For each $k$ with $1\le k\le n-1$, we have strict inequality in (\ref{eq:Hankel2}).

\smallskip
In this case we have $r_1\ge 1$ and we consider $R'=(r_1',r_2',\ldots,r_n')=(r_1-1,r_2,\ldots,r_{n-1},r_n+1)$. Then  with $r_i'$ replacing $r_i$, $R'$ satisfies the corresponding conditions (\ref{eq:Hankel1}) and (\ref{eq:Hankel2}). Thus by the minimality condition on $r_1$, there exists a Hankel tournament $T=[t_{ij}]$
with score vector $R'$. Since $r_n'-r_1'=(r_n+1)-(r_1-1)=(r_n-r_1)+2\ge 2$, there exists an integer $p$ with $2\le p\le n-1$, such that $t_{1p}=0$ and $t_{np}=1$.  Since $T$ is a tournament, we have
$t_{p1}=1$ and $t_{pn}=0$. Since $T$ is a Hankel tournament, we also have
$t_{n+1-p,n}=0$, $t_{n+1-p,1}=1$, $t_{n,n+1-p}=1$, and $t_{1,n+1-p}=0$. Thus with $q=n+1-p$, we have the structure:
\[\begin{array}{c||c|c|c|c|c|c|c}
&1&\cdots&p&\cdots&q&\cdots&n\\ \hline\hline
1&&&0&&0&&\\ \hline
\vdots&&&&&&&\\ \hline
p&1&&0&&a&&0\\ \hline
\vdots&&&&&&&\\ \hline
q&1&&1-a&&0&&0\\ \hline
\vdots&&&&&&&\\ \hline
n&&&1&&1&& \end{array}  
\] where $a$, on the Hankel diagonal,   is $0$ or $1$.
If $a=1$, then replacing as shown below:
\[
\begin{array}{c||c|c|c|c|c|c|c}
&1&\cdots&p&\cdots&q&\cdots&n\\ \hline\hline
1&&&0&&\cellcolor[gray]{0.8}0&&\\ \hline
\vdots&&&&&&&\\ \hline
p&1&&0&&\cellcolor[gray]{0.8}1&&\cellcolor[gray]{0.8}0\\ \hline
\vdots&&&&&&&\\ \hline
q&\cellcolor[gray]{0.8}1&&\cellcolor[gray]{0.8}0&&0&&0\\ \hline
\vdots&&&&&&&\\ \hline
n&&&\cellcolor[gray]{0.8}1&&1&& \end{array}\quad
\longrightarrow\quad
\begin{array}{c||c|c|c|c|c|c|c}
&1&\cdots&p&\cdots&q&\cdots&n\\ \hline\hline
1&&&0&&\cellcolor[gray]{0.8}1&&\\ \hline
\vdots&&&&&&&\\ \hline
p&1&&0&&\cellcolor[gray]{0.8}0&&\cellcolor[gray]{0.8}1\\ \hline
\vdots&&&&&&&\\ \hline
q&\cellcolor[gray]{0.8}0&&\cellcolor[gray]{0.8}1&&0&&0\\ \hline
\vdots&&&&&&&\\ \hline
n&&&\cellcolor[gray]{0.8}0&&1&& \end{array}\ ,\]
we obtain a Hankel tournament with score vector $R$. If $a=0$, then  replacing as shown below:
\[
\begin{array}{c||c|c|c|c|c|c|c}
&1&\cdots&p&\cdots&q&\cdots&n\\ \hline\hline
1&&&\cellcolor[gray]{0.8}0&&0&&\\ \hline
\vdots&&&&&&&\\ \hline
p&\cellcolor[gray]{0.8}1&&0&&\cellcolor[gray]{0.8}0&&0\\ \hline
\vdots&&&&&&&\\ \hline
q&1&&\cellcolor[gray]{0.8}1&&0&&\cellcolor[gray]{0.8}0\\ \hline
\vdots&&&&&&&\\ \hline
n&&&1&&\cellcolor[gray]{0.8}1&& \end{array}\quad\longrightarrow\quad
\begin{array}{c||c|c|c|c|c|c|c}
&1&\cdots&p&\cdots&q&\cdots&n\\ \hline\hline
1&&&\cellcolor[gray]{0.8}1&&0&&\\ \hline
\vdots&&&&&&&\\ \hline
p&\cellcolor[gray]{0.8}0&&0&&\cellcolor[gray]{0.8}1&&0\\ \hline
\vdots&&&&&&&\\ \hline
q&1&&\cellcolor[gray]{0.8}0&&0&&\cellcolor[gray]{0.8}1\\ \hline
\vdots&&&&&&&\\ \hline
n&&&1&&\cellcolor[gray]{0.8}0&&\end{array}\ ,\]
we obtain  a Hankel tournament with score vector $R$. 
Both possibilities give a contradiction. Note that if $p=q$, then their common value is $(n+1)/2$ and the number of replacements above is only four.

\smallskip\noindent
{\it Case} $2$: There is a $k$ with $1\le k\le n-1$ such that we have equality in (\ref{eq:Hankel2}), that is, $\sum_{i=1}^kr_i={k\choose 2}$.

\smallskip
We  calculate that
\begin{eqnarray*}
\sum_{i=1}^{n-k}r_i &=&{n\choose 2}-\sum_{i=n-k+1}^nr_i
={n\choose 2}-\sum_{i=1}^k(n-1-r_i)\\
&=&{n\choose 2}-k(n-1)+{k\choose 2}
={{n-k}\choose 2}.
\end{eqnarray*}
Hence we may assume that $k\le n/2$, and we also have that $\sum_{i=1}^{n-k}r_i ={{n-k}\choose 2}$.

We now  calculate that
\begin{eqnarray*}
r_{k+1}&=&\sum_{i=1}^{k+1}r_i-\sum_{i=1}^k r_i
= \sum_{i=1}^{k+1}r_i-{k\choose 2}\\
&\ge& {{k+1}\choose 2}-{k\choose 2}=k.\end{eqnarray*}
Thus by the monotonicity assumption, $r_i\ge k$ for $k+1\le i\le n$. Similarly, $r_k\le k-1$.

It follows from Landau's theorem that there exists a tournament $T_1$ with score vector
$R_1=(r_1,r_2,\ldots,r_k)$. Since $r_i+r_{n-i}=n-1$ for all $i$ and $r_k\le k-1$, we have
$r_{n-k+1}\ge (n-1)-(k-1)=n-k$. The monotonicity assumption on $R$ now implies
that $R_2=(r_{n-k+1}-(n-k),r_{n-k+2}-(n-k),\ldots,r_n-(n-k))$ is a vector of nonnegative integers.
Since for $1\le i\le k$, we have
\[r_i+(r_{n+1-i}-(n-k))=r_i+r_{n+1-i}-(n-k)=(n-1)-(n-k)=k-1,\]
then $s_i=r_{n+1-i}-(n-k)$ is the $i$th column sum of $T_1$.
Hence the row sum vector of $T_1^h$ equals $R_2$.

To summarize thus far, the matrix
\[T=\left[\begin{array}{c|c|c}
T_1&O_{k,n-2k}&O_{k,k}\\ \hline
J_{n-2k,k}&X&O_{n-2k,k}\\ \hline
J_{k,k}&J_{k,n-2k}&T_1^h
\end{array}\right]\]
will be a Hankel tournament  with score vector $R$, provided we can choose $X$ as an $(n-2k)\times (n-2k)$ Hankel tournament  with score vector $(r_{k+1}-k,r_{k+2}-k,\ldots,r_{n-k}-k)$, %known to be a nonnegative integral vector with $r_{k+1}-k\le r_{k+2}-k\le \ldots\le r_{n-k}-k$.
which we know is nonnegative.
We calculate that for $1\le l\le n-2k$,
\begin{eqnarray*}
 \sum_{j=1}^l (r_{k+j}-k) &=&\left(\sum_{i=1}^{k+l}r_i-\sum_{i=1}^kr_i\right)-lk
 =\sum_{i=1}^{k+l}r_i-{k\choose 2}-lk\\
 &\ge& {{k+l}\choose 2}-{k\choose 2}-lk
 ={l\choose 2}.
 \end{eqnarray*}
Since by assumption $r_{k+j}+r_{n+1-(k+j)}=n-1$, we have
\[(r_{k+j}-k)+(r_{n+1-(k+j)}-k)=(n-2k)-1.\]
The minimality assumption now implies that the Hankel tournament  $X$ exists, and hence we have a Hankel tournament  with score vector $R$. This contradiction completes this case, and thus the proof of the theorem is complete.
\end{proof}

As a corollary we show that the nondecreasing assumption in Theorem \ref{th:Hankel} can be weakened to 2-nearly nondecreasing.

\begin{corollary}\label{cor:Hankel}
Let $R=(r_1,r_2,\ldots,r_n)$ be a $2$-nearly nondecreasing vector of nonnegative integers.
Then there exists a Hankel tournament with score vector $R$ if and only if
\begin{equation}\label{eq:Hankel1a}
r_i+r_{n+1-i}=n-1,\quad (i=1,2,\ldots,n) \end{equation}
and
\begin{equation}\label{eq:Hankel2a}
\sum_{i=1}^k r_i\ge {k\choose {2}},\ (k=1,2,\ldots,n), \mbox{ with equality if $k=n$}.
\end{equation}
\end{corollary}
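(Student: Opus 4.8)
The plan is to reduce Corollary~\ref{cor:Hankel} to Theorem~\ref{th:Hankel} exactly as Theorem~\ref{th:newlandau} was reduced to Landau's theorem via Lemma~\ref{lem:nearly}: show that the nondecreasing rearrangement of a $2$-nearly nondecreasing $R$ satisfying the two displayed conditions still satisfies both conditions, and then invoke the permutation argument of Lemma~\ref{lem:monotone} (or simply Theorem~\ref{th:Hankel}) to produce the Hankel tournament. Necessity is immediate (it is contained in Theorem~\ref{th:Hankel} and the derivation of the Hankel property in \eqref{eq:Hankelprop}), so the content is sufficiency.

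First I would observe that the Hankel property \eqref{eq:Hankel1a} is a statement about the \emph{multiset} of pairings $\{r_i,r_{n+1-i}\}$, but more usefully it is automatically preserved under rearrangement only in the weak sense that the multiset $\{r_1,\ldots,r_n\}$ is closed under $x\mapsto n-1-x$; this is what I actually need, and it follows from \eqref{eq:Hankel1a}. Hence the nondecreasing rearrangement $R'$ of $R$ automatically satisfies \eqref{eq:Hankel1a}: if $x$ occurs with some multiplicity in $R$ then so does $n-1-x$ with the same multiplicity, so after sorting, the $i$th smallest and the $i$th largest entries sum to $n-1$. Then I would apply Lemma~\ref{lem:nearly} to conclude that $R'$ satisfies the Landau inequalities \eqref{eq:Hankel2a} (this is exactly the statement of Lemma~\ref{lem:nearly}, since \eqref{eq:Hankel2a} is \eqref{eq:landau2}). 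So $R'$ is a nondecreasing vector of nonnegative integers satisfying both \eqref{eq:Hankel1a} and \eqref{eq:Hankel2a}, and Theorem~\ref{th:Hankel} gives a Hankel tournament $T'$ with score vector $R'$.

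Finally I would transport $T'$ back to a Hankel tournament with score vector $R$. This is where a little care is needed: an arbitrary simultaneous row/column permutation of a Hankel tournament need not be a Hankel tournament, so I cannot just conjugate by the sorting permutation. Instead I appeal to Lemma~\ref{lem:monotone}, which already establishes that a Hankel tournament with score vector $R'$ exists if and only if one with score vector $R$ exists, for $R'$ the nondecreasing rearrangement of $R$ --- and its proof shows the passage is achieved by the special structure-preserving pairwise and double-pairwise interchanges. Since $R'$ here is precisely that rearrangement, Lemma~\ref{lem:monotone} converts $T'$ into the desired Hankel tournament with score vector $R$, completing the proof.

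The main obstacle is purely bookkeeping: confirming that ``closed under $x\mapsto n-1-x$ as a multiset'' is genuinely invariant under sorting and is implied by \eqref{eq:Hankel1a}, and then noticing that Lemma~\ref{lem:monotone} (not a bare conjugation) is the right tool for the last step. There is no new combinatorial construction required; the heavy lifting was done in Lemma~\ref{lem:nearly}, Theorem~\ref{th:Hankel}, and Lemma~\ref{lem:monotone}.
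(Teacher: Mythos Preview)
Your proposal is correct and follows essentially the same route as the paper: invoke Lemma~\ref{lem:nearly} to see that the nondecreasing rearrangement still satisfies the Landau inequalities, check that the Hankel property is preserved under sorting (your multiset argument is the paper's ``$k$th smallest plus $k$th largest equals $n-1$'' argument), and then apply Theorem~\ref{th:Hankel}. You are in fact slightly more careful than the paper in explicitly citing Lemma~\ref{lem:monotone} to pass from the sorted $R'$ back to the original $R$; the paper simply says ``the corollary follows from Theorem~\ref{th:Hankel}'' and leaves that step implicit.
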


\begin{proof}
As before the conditions (\ref{eq:Hankel1a}) and (\ref{eq:Hankel2a})  are necessary for a Hankel tournament with score vector $R$. Now assume that (\ref{eq:Hankel1a}) and (\ref{eq:Hankel2a})  hold. By Lemma \ref{lem:nearly}  the nondecreasing arrangement of $R$
also satisfies (\ref{eq:Hankel2a}). Thus we need only check that the nondecreasing rearrangement of $R$ also satisfies (\ref{eq:Hankel1a}). Then the corollary follows from Theorem \ref{th:Hankel}. 

Let $x_1,x_2,\ldots,x_n$ be the nondecreasing rearrangement of $R$. Then for each $k$,  $x_k$ is the $k$th smallest of $r_1,r_2,\ldots,r_n$ and $x_{n+1-k}$ is the $k$th largest. Suppose that $x_k=a_j$. It follows from (\ref{eq:Hankel1a}) that $a_{n+1-j}$ is the $k$ largest of $a_1,a_2,\ldots,a_n$ and hence $x_{n+1-k}=n+1-a_j$. Therefore
$x_k+x_{n+1-k}=a_j+a_{n+1-j}=n-1$.
\end{proof}

 We now provide an algorithm to construct  a Hankel tournament with score vector $R$.
This algorithm is a variant of Ryser's algorithm \cite{Ry} to determine a tournament with a prescribed score vector (see also pages 220--222 of \cite{Br}). The last column of such a tournament determines the last row by  the combinatorial skew symmetry property of a tournament and thus, by the symmetry property of a Hankel tournament, determines the first row and first column.

\bigskip
\centerline{\bf Hankel Algorithm for a $T=[t_{ij}]\in {\mathcal T}_H(R)$ with $R$ nondecreasing}
\medskip
Let $R=(r_1,r_2,\ldots,r_n)$  be a nearly nondecreasing vector of nonnegative integers satisfying the Hankel property $r_i+r_{n+1-i}=n-1$ for $i=1,2,\ldots,n$  and the inequalities $\sum_{i=1}^kr_i\ge {k\choose 2}$ with equality for $k=n$ of (\ref{eq:Hankel2}).
We let $S_n=(s_1,s_2,\ldots,s_n)$  where $s_i=n-1-r_i$ for $i=1,2,\ldots,n$. 

\smallskip\noindent
Remarks: 
$S_n$ is the column sum vector of a Hankel tournament with score vector $R$. If $n$ is odd, then we know that $r_{(n+1)/2}=
(n-1)/2$, $r_n\ge (n-1)/2$, and $s_n\le (n-1)/2$.
 If $n$ is even, $r_n\ge n/2$  and  $s_n\le (n-2)/2$.

\begin{enumerate}
\item[(1)] If $n$ is even:
\begin{enumerate}
\item  Let $v=(v_1,v_2,\ldots,v_n)$ be the vector
$(\underbrace{0,\ldots,0}_{n-r_1-1},\underbrace{1,\ldots,1}_{r_1},0)$, and let $R_{n-2}'=(r_1',r_2',\ldots,r_{n-2}')$ be the 2-nearly nondecreasing vector with the  Hankel property defined by
\[r_i'=r_{i+1}-v_{i+1}-(1-v_{n-i}) \mbox{ for $1\le i\le n-2$}.\]
\item Let $Q_{n-2}$ be the permutation matrix of the type used in the proof of Lemma \ref{lem:monotone} such that $R_{n-2}'Q_{n-2}^t$ is a  nondecreasing sequence $R_{n-2}$ with the Hankel property.
\item Let $T_{n-2}$ be the $(n-2)\times (n-2)$ Hankel tournament obtained by this algorithm applied to the score vector $R_{n-2}$.
\item Let $T_n$ be the Hankel tournament with score vector $R$ defined by
\[T_n=\left[\begin{array}{c|c|c}
0&v_{n-1}\cdots v_2&v_1\\ \hline
1-v_{n-1}&&v_2\\
\vdots&Q_{n-2}^tT_{n-2}Q_{n-2}&\vdots \\
1-v_2&&v_{n-1}\\ \hline
1-v_1&1-v_2\cdots 1-v_{n-1}&0\end{array}\right].\]
\end{enumerate}

\item[(2)] If $n$ is odd:
\begin{enumerate}
\item Let $R_{n-1}'=(r_1',r_2',\ldots,r_{n-1}')$ 
be the nearly nondecreasing vector defined by
\[r_i'=\left\{\begin{array}{cl}
r_i,&\mbox{if $1\le i\le \frac{n-1}{2}$,}\\
r_{i+1}-1,&\mbox{if $\frac{n+1}{2}\le i\le n-1$.}\end{array}\right.\]
\item Let $Q_{n-1}$ be the permutation matrix of the type used in Lemma \ref{lem:monotone}  such that $R_{n-1}'Q_{n-1}^t$ is a 
nondecreasing vector $R_{n-1}$.
\item Let $T_{n-1}$ be the $(n-1)\times (n-1)$ Hankel tournament obtained by this algorithm 
applied to the score vector $R_{n-1}$, and let $T_{n-1}'$ be the Hankel tournament with score vector $R_{n-1}'$ obtained by applying the permutation matrix $Q_{n-1}$:
\[T_{n-1}'=Q_{n-1}^tT_{n-1} Q_{n-1}=\left[\begin{array}{c|c}
A&B\\ \hline J_{(n-1)/2}-B^{t}&A^{h}\end{array}\right].\]
\item Let $T_n$ be the Hankel tournament with score vector $R$ defined by
\[T_n=\left[\begin{array}{ccc|c|ccc}
&&&0&&&\\
&A&&\vdots&&B&\\
&&&0&&&\\ \hline
1&\cdots&1&0&0&\cdots&0\\ \hline
&&&1&&&\\
&J_{(n-1)/2}-B^{t} &&\vdots&&A^{h}&\\
&&&1&&&\end{array}\right].\]
\end{enumerate}
\item[(3)] Output $T_n$.
\end{enumerate}

\begin{example}{\rm Let $n=7$ and let $R=(1,2,2,3,4,4,5)$. Applying the Hankel algorithm, we obtain the following Hankel tournament in ${\mathcal T}_H(R)$:
\[\left[\begin{array}{c|c|c||c||c|c|c}
0&1&0&0&0&0&0\\  \hline
0&0&1&0&0&1&0\\ \hline
1&0&0&0&1&0&0\\ \hline\hline
1&1&1&0&0&0&0\\ \hline\hline
1&1&0&1&0&1&0\\ \hline
1&0&1&1&0&0&1\\ \hline
1&1&1&1&1&0&0\end{array}\right].\]
In carrying out the algorithm, the resulting score vectors are illustrated below where $\rightarrow_{\pi}$ means a permutation is used  and $\rightarrow_a$ means the resulting score vector after  a step of the algorithm:
\[(1,2,2,3,4,4,5)\rightarrow_a (1,2,2,3,3,4)\rightarrow_a (2,1,2,1)\rightarrow_{\pi}
(1,1,2,2) \rightarrow_a(1,0)\rightarrow_{\pi} (0,1).\]
}
\end{example}
\begin{theorem}\label{th:alghankel} The Hankel algorithm constructs a Hankel tournament in ${\mathcal T}_H(R)$ when $R$ satisfies the given conditions.
\end{theorem}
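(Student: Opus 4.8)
The plan is to argue by induction on $n$, following the recursive structure of the algorithm. The base cases $n=1,2$ are immediate: the hypotheses force $R=(0)$ for $n=1$ and, after the monotone rearrangement, $R=(0,1)$ for $n=2$, and in each case the algorithm returns the obvious Hankel tournament. For the inductive step two things must be checked: (i) the score vector handed to the recursive call --- $R_{n-2}$ in the even case, $R_{n-1}$ in the odd case --- again satisfies the hypotheses of the theorem in the smaller dimension, so the induction hypothesis applies; and (ii) the matrix $T_n$ assembled from the recursively produced Hankel tournament is itself a Hankel tournament with score vector $R$.

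Consider the odd case. The reduced vector $R_{n-1}'$ deletes the central entry $r_{(n+1)/2}=\frac{n-1}{2}$ and subtracts $1$ from each of $r_{(n+3)/2},\dots,r_n$; since $R$ is nondecreasing with the Hankel property, $r_j\ge\frac{n-1}{2}\ge 1$ for $j\ge\frac{n+1}{2}$, so $R_{n-1}'$ is a nonnegative integer vector, it is plainly nearly nondecreasing, and $r_i'+r_{n-i}'=n-2$ follows from $r_i+r_{n+1-i}=n-1$. Writing $R_k=\sum_{i=1}^k r_i$, one checks $\sum_{j=1}^{l}r_j'=R_l\ge\binom{l}{2}$ for $l\le\frac{n-1}{2}$, and $\sum_{j=1}^{l}r_j'=R_{l+1}-l\ge\binom{l+1}{2}-l=\binom{l}{2}$ for $\frac{n+1}{2}\le l\le n-1$, with equality at $l=n-1$ because $R_n=\binom{n}{2}$. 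By Lemma~\ref{lem:nearly} and the fact (used in the proof of Corollary~\ref{cor:Hankel}) that a vector whose entries pair up to a constant keeps this property when sorted, the nondecreasing rearrangement $R_{n-1}$ of $R_{n-1}'$ still satisfies the hypotheses, so the induction hypothesis produces $T_{n-1}\in{\mathcal T}_H(R_{n-1})$ and hence $T_{n-1}'=Q_{n-1}^tT_{n-1}Q_{n-1}\in{\mathcal T}_H(R_{n-1}')$. The tournament and Hankel symmetries of $T_{n-1}'$ force the displayed block form, with $A$ of order $\frac{n-1}{2}$ and $B^h=B$. Bordering by the prescribed central cross yields $T_n$; a direct check shows $T_n$ has zero main diagonal, is skew-symmetric, satisfies $T_n^h=T_n$, and has score vector $R$: the new central row has sum $\frac{n-1}{2}=r_{(n+1)/2}$, each row above keeps its sum $r_i$ (its new central-column entry is $0$), and each row below gains exactly $1$ from the new central column, so its sum becomes $r_{i-1}'+1=r_i$.

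The even case is the delicate one. Since $r_1+r_n=n-1$ with $r_1\le r_n$, we have $r_1\le\frac{n-2}{2}$, so $n-r_1-1\ge 1$ and $v$ is a genuine $(0,1)$-vector with $v_1=v_n=0$ and exactly $r_1$ ones, in positions $n-r_1,\dots,n-1$. Because $r_i'=r_{i+1}-v_{i+1}-(1-v_{n-i})$ differs from the nondecreasing vector $(r_2,\dots,r_{n-1})$ by a $(0,1,2)$-vector, $R_{n-2}'$ is $2$-nearly nondecreasing; the Hankel property $r_i'+r_{n-1-i}'=n-3$ is immediate since the $v_{i+1}$ and $v_{n-i}$ terms cancel; and $r_i'\ge 0$ because whenever $r_{i+1}$ would be too small for the subtracted amount the relevant entries of $v$ are $0$ (small scores sit at the front of $R$, away from the block of ones of $v$; this uses $R$'s Landau inequalities and the Hankel property). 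For Landau, using the positions of the ones of $v$ one computes, for $1\le l\le n-2$,
\[\sum_{j=1}^{l}r_j'=R_{l+1}-r_1-(l+r_1-n+2)^+-l+\min(l,r_1).\]
If $l\le r_1$ the $(\cdot)^+$ term vanishes and this equals $R_{l+1}-r_1=\sum_{i=2}^{l+1}r_i\ge l\,r_1\ge l^2\ge\binom{l}{2}$. If $r_1<l\le n-r_1-2$ it still vanishes and this equals $R_{l+1}-l\ge\binom{l+1}{2}-l=\binom{l}{2}$. If $l\ge n-r_1-1$, put $m=n-1-l$, so $1\le m\le r_1$ and $l>r_1$; the expression equals $R_{l+1}-2l-r_1+n-2$, and substituting the identity $R_{n-m}=\binom{n}{2}-m(n-1)+R_m$ (a consequence of $R_n=\binom{n}{2}$ and $r_{n+1-i}=n-1-r_i$) and simplifying gives $\sum_{j=1}^{l}r_j'-\binom{l}{2}=R_m-r_1-\binom{m-1}{2}$; since $R_m-r_1=\sum_{i=2}^{m}r_i\ge(m-1)r_1$ and $m\le r_1$, this is $\ge(m-1)r_1-\binom{m-1}{2}\ge 0$. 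One also checks equality holds at $l=n-2$. Thus $R_{n-2}'$ meets the hypotheses; passing to its nondecreasing rearrangement as above, the induction hypothesis produces $T_{n-2}$ and hence the central block $Q_{n-2}^tT_{n-2}Q_{n-2}\in{\mathcal T}_H(R_{n-2}')$ of $T_n$. Finally $T_n$ is a Hankel tournament with score vector $R$: the four corner entries and the central block give the correct main diagonal; the border is skew-symmetric because the first and last rows and columns are $v$ and $1-v$ arranged complementarily; $T_n^h=T_n$ because the first row of $T_n$ equals its last column (both list the $v_i$) and the central block is already Hankel; and the row sums are $\sum_{i\le n-1}v_i=r_1$, then $(1-v_{n+1-i})+r_{i-1}'+v_i=r_i$ for $2\le i\le n-1$ by the definition of $r_{i-1}'$, then $(n-1)-r_1=r_n$.

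The only step that is not a routine verification is the Landau inequality for the reduced vector in the even case; the main obstacle is the bookkeeping in the range $l\ge n-r_1-1$, where the bare bound $R_{l+1}\ge\binom{l+1}{2}$ is off by one and one must instead feed in the global equality $R_n=\binom{n}{2}$ together with the Hankel property to control the tail sum $\sum_{i>l+1}r_i$. Everything else --- the base cases, the whole odd case, the preservation of the hypotheses under sorting (Lemma~\ref{lem:nearly} and the argument of Corollary~\ref{cor:Hankel}), and the verification that the bordered matrix $T_n$ is a Hankel tournament of the prescribed score vector --- is a direct computation.
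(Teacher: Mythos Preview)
Your proof is correct and follows the same overall induction/reduction scheme as the paper's proof: verify that the reduced vector handed to the recursive call again satisfies the hypotheses, and check that the bordered matrix $T_n$ is a Hankel tournament with score vector $R$. The odd case and the first two ranges of the even-case Landau check are essentially identical to the paper's.

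The one genuine difference is in the third range $l\ge n-r_1-1$ of the even case. The paper invokes a \emph{stronger} inequality
\[
\sum_{i=1}^{l+1}r_i\ \ge\ \binom{l+1}{2}+(l-n+r_1+2),
\]
justified combinatorially by counting the ones in the $l\times(l+1)$ submatrix on rows $1,\dots,l$ and columns $1,\dots,l,n$ of an actual Hankel tournament with score vector $R$; this implicitly leans on Theorem~\ref{th:Hankel}, already proved. You instead substitute the algebraic identity $R_{n-m}=\binom{n}{2}-m(n-1)+R_m$ (a direct consequence of the Hankel property and the global equality $R_n=\binom{n}{2}$) and then use only the monotonicity bound $R_m-r_1\ge(m-1)r_1$ with $m\le r_1$. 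Both routes land on $\sum_{j\le l}r_j'\ge\binom{l}{2}$, but yours is more self-contained: it avoids appealing to the existence theorem and works purely from the numerical hypotheses on $R$. The paper's route is shorter once the stronger inequality is granted, but the source of that inequality is less transparent. Your explicit verification that $T_n$ is a Hankel tournament with the correct row sums is also more thorough than the paper, which leaves this to inspection.
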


\begin{proof} We first assume that  $n$  is even. The entries of $R'$ satisfy:
\[r_i'=\left\{\begin{array}{cl}
r_{i+1},&\mbox{ if $1\le i\le r_1$},\\
r_{i+1}-1,&\mbox{ if $r_1+1\le i\le n-r_1-2$},\\
r_{i+1}-2,&\mbox{ if $n-r_1-1\le i\le n-2$.}\end{array}\right.\]
Since $R$ satisfies the Hankel property, so does $R'$.
Thus to  verify that the algorithm gives a Hankel tournament with score vector $R$, by Corollary \ref{cor:Hankel} we need only verify that $R_{n-2}'$ is 2-nearly nondecreasing and satisfies the corresponding inequalities of (\ref{eq:Hankel2}). 
Since $r_i'\le r_i$ and $r_j'\ge r_j-2$, we have $r_i'\le r_i\le r_j\le r_j'+2$ for $i<j$, and hence
$R_{n-2}'$
is 2-nearly nondecreasing.

We now verify that $R_{n-2}'$ satisfies the corresponding inequalities of (\ref{eq:Hankel2}).
Again we consider three cases,

\smallskip\noindent
Case $1\le k\le r_1$: We have
\[\sum_{i=1}^kr_i'=\sum_{i=1}^k r_{i+1}\ge \sum_{i=1}^kr_i\ge {k\choose 2}.\]

\smallskip
\noindent
Case $r_1+1\le k\le n-r_1-2$: We calculate that
\begin{eqnarray*}
\sum_{i=1}^k r_i'&=&\sum_{i=1}^{r_1}r_{i+1}+
\sum_{i=r_1+1}^k (r_{i+1}-1)\\
&=&\left(\sum_{i=1}^kr_{i+1}\right)-(k-r_1)
=\left(\sum_{i=1}^{k+1}r_i\right)-k\\
&\ge& {{k+1}\choose 2}-k={k\choose 2}.
\end{eqnarray*}

\smallskip
\noindent
Case $n-r_1-1\le k\le n-2$: We first observe that for $n-r_1\le l\le n-1$, we have the stronger inequality
\[\sum_{i=1}^lr_i\ge {l\choose 2} +(l-n+r_1+1)\]
by considering  the minimum number of 1s in the $l\times (l+1)$ 
submatrix determine by rows $1,2,\ldots,l$ and columns $1,2,\ldots,l,n$.
Using this inequality we calculate that
\begin{eqnarray*}
\sum_{i=1}^kr_i' &=& \sum_{i=1}^{r_1}r_{i+1}+\sum_{i=r_1+1}^{n-r_1-2} (r_{i+1}-1)+\sum_{i=n-r_1-1}^k (r_{i+1}-2)\\
&=& \left(\sum_{i=1}^k r_{i+1}\right)-(n-2r_1-2)-2(k-n+r_1+2)\\
&=&\left(\sum_{i=1}^{k+1} r_i\right)- (2k+2+r_1-n)\\
&\ge& {{k+1}\choose 2}+(k-n+r_1+2)-(2k+2+r_1-n)\\
&=& {{k+1}\choose 2}-k={k\choose 2}.
\end{eqnarray*}

Now we  assume that  $n$  is odd.

The entries of the vector $R_{n-1}'$ are given by
\[r_i'=\left\{\begin{array}{cl}
r_i,& \mbox{ if $1\le i\le \frac{n-1}{2}$,}\\
r_{i+1}-1,&\mbox{ if $\frac{n+1}{2}\le i\le n-1$.}\end{array}\right. \]
It follows by inspection that the vector $R_{n-1}'$ is nearly nondecreasing, and we have $r_i'+r_{n-i}'=(n-1)-1$ for each $i$. Now we verify that  Landau's inequalities 
$\sum_{i=1}^kr_i'\ge {k\choose 2}$ hold for $1\le k\le n-1$ with equality if $k=n-1$. 
If $1\le k\le (n-1)/2$, then this is clear. Suppose that  $(n-1)/2\le k\le n-1$. Then 
\begin{eqnarray*}
\sum_{i=1}^k r_i'&=&\left(\sum_{i=1}^{(n-1)/2}r_i\right)+\left(\sum_{i=(n+3)/2}^{k+1} r_i\right)-
\left(k+1-\frac{n+3}{2}+1\right)\\
&=& \left(\sum_{i=1}^{k+1}r_i\right)-k+\frac{n-1}{2}-r_{(n+1)/2}\\
&=&\left( \sum_{i=1}^{k+1}r_i\right)-k
\ge  {{k+1}\choose 2}-k={k\choose 2},
\end{eqnarray*}
with equality if $k=n-1$.
Since $R'$ is nearly nondecreasing, after rearranging its terms to form a nondecreasing vector, we can apply the algorithm for the even case of $n-1$ and as indicated in the algorithm   construct a Hankel tournament with score vector $R$.
\end{proof}

Let $T^*$ be the Hankel tournament in ${\mathcal T}_H(R)$ constructed by the Hankel algorithm.
We next identify certain switches and pairs of switches that allow one to move from any Hankel tournament $T\in{\mathcal T}_H(R)$  to $T^*$ where each switch  and pairs of switches produces another Hankel  tournament in ${\mathcal T}_H(R)$. Since these switches are reversible,  this allows one to move from any   $T_1\in {\mathcal T}_H(R)$ to any other
$T_2\in {\mathcal T}_H(R)$ where each switch produces a Hankel tournament in  ${\mathcal T}_H(R)$. 
We collect all these switches, including the switches used for loopy tournament and a switch to be used for combinatorially skew-Hankel tournaments, in Table 1.

\begin{center}\begin{tabular}{|c|c|c|}\hline
edge-loop switch & $\hbox{$\rightarrow$}\kern -1.5pt\hbox{$\circ$}_{ij}$ &
$\begin{array}{c||c|c} &i&j \\ \hline \hline i&0&1 \\ \hline j&0&1  \end{array} \rightarrow
\begin{array}{c||c|c} &i&j \\ \hline \hline i&1&0 \\ \hline j&1&0  \end{array}$ \\ \hline
3-cycle switch & $\triangle_{i,j,k}$ &
$\begin{array}{c||c|c|c} &i&j&k \\ \hline \hline i&x&1&0 \\ \hline j&0&y&1 \\ \hline k&1&0&z\end{array} \rightarrow
\begin{array}{c||c|c|c} &i&j&k \\ \hline \hline i&x&0&1 \\ \hline j&1&y&0 \\ \hline k&0&1&z \end{array}$ \\ \hline
3-cycle switch & $\triangle_{i,j,k}$ & no loops:  $x=y=z=0$ above \\ \hline
\begin{tabular}{c}
3-cycle Hankel\\  switch \end{tabular}& $\triangle_{i,\frac{n+1}{2},n+1-i}$ &
$j=(n+1)/2$ and $k=n+1-i$ above \\ \hline
4-cycle switch & $\square_{i,j,k,l}$ &
$\begin{array}{c||c|c|c|c} &i&j&k&l \\ \hline \hline i&0&1&a&0 \\ \hline j&0&0&1&b \\ \hline k&1-a&0&0&1 \\ \hline l&1&1-b&0&0 \end{array} \rightarrow \begin{array}{c||c|c|c|c} &i&j&k&l \\ \hline \hline i&0&0&a&1 \\ \hline j&1&0&0&b \\ \hline k&1-a&1&0&0 \\ \hline l&0&1-b&1&0 \end{array}$ \\ \hline
\begin{tabular}{c}
4-cycle Hankel \\switch\end{tabular} & $\square_{i,j,n+1-j,n+1-i}$ & $k=n+1-j$, $l=n+1-i$, and $a=b$ above \\ \hline
\begin{tabular}{c} 
4-cycle skew-Hankel\\ switch\end{tabular} & $\square_{i,j,n+1-i,n+1-j}$ & 
$\begin{array}{c||c|c|c|c} &i&j&k&l\\ \hline \hline i&0&1&0&0 \\ \hline j&0&0&1&0 \\ \hline k&0&0&0&1 \\ \hline l&1&0&0&0 \end{array} \rightarrow \begin{array}{c||c|c|c|c} &i&j&k&l\\ \hline \hline i&0&0&0&1 \\ \hline j&1&0&0&0\\ \hline k&0&1&0&0 \\ \hline l&0&0&1&0 \end{array}$  $\begin{array}{c}k=n+1-i\\ l=n+1-j\end{array}$
 \\ \hline
\end{tabular}\end{center}

\medskip
\centerline{Table 1: Summary of switches.}

Let $i,j,k,l$ be four distinct indices and consider the $4\times4$  matrix $T[i,j,k,l]$ of $T$ 
whose row and column indices are $\{i,j,k,l\}$. 
%Thus $T[i,j,k,l]$ is a matrix of $T$ or a simultaneous permutation thereof.
If we have a directed cycle $i\rightarrow j\rightarrow k\rightarrow l\rightarrow i$ in a tournament $T$, then $T[i,j,k,l]$ has the form on the left in (\ref{eq:spring}), and  we define $\square_{i,j,k,l}$ to be the switch that replaces $T[i,j,k,l]$  with the matrix on the right:
\begin{equation}\label{eq:spring}
\begin{array}{c||c|c|c|c} &i&j&k&l \\ \hline \hline i&&1&a&0 \\ \hline j&0&&1&b \\ \hline k&1-a&0&&1 \\ \hline l&1&1-b&0&\end{array} \rightarrow \begin{array}{c||c|c|c|c} &i&j&k&l \\ \hline \hline i&&0&a&1 \\ \hline j&1&&0&b \\ \hline k&1-a&1&&0 \\ \hline l&0&1-b&1&  \end{array}\ .\end{equation}
%Here $a$ equals 0 or 1. 
The switch $\square_{i,j,k,l}$ reverses a 4-cycle of $T$: $$(i\rightarrow j\rightarrow k\rightarrow l\rightarrow i) \rightarrow (i\leftarrow j\leftarrow k\leftarrow l\leftarrow i)$$ and we call it a \emph{$4$-cycle switch}. It is easy to see that this reversal of a 4-cycle can also be accomplished
by the reversal of two 3-cycles: if, for instance, we have $a=b=0$, we reverse two 3-cycles using the \emph{$3$-cycle switches} $\triangle_{i,j,k}$ and then $\triangle_{i,k,l}$.
We define $\triangle^c_{i,j,k}$ to be the switch $\triangle_{n+1-i,n+1-j,n+1-k}$ and we refer to it as the \emph{complementary switch} of $\triangle_{i,j,k}$ (the indices are  complementary to $i,j,k$). In the same way, we define $\square^c_{i,j,k,l}$ to be the switch $\square_{n+1-i,n+1-j,n+1-k,n+1-l}$ and call it the \emph{complementary switch} of $\square_{i,j,k,l}$.

For Hankel tournaments, we have that $$T[i,j,k,l]=T[n+1-l,n+1-k,n+1-j,n+1-i]^h.$$ We will consider two possibilities for the indices $i,j,k,l$:
\smallskip

(i) $j\neq n+1-i,n+1-k$ and  $l\neq n+1-i,n+1-k$ with at most one of the equalities $k=n+1-i$ and  $l=n+1-j$; \smallskip

(ii)  $k=n+1-j$ and $l=n+1-i$.

\smallskip\noindent
In case (i), there is no overlap on the entries of $T[i,j,k,l]$ and $T[n+1-l,n+1-k,n+1-j,n+1-i]$ in the positions we are changing when applying $\square_{i,j,k,l}$ and $\square^c_{l,k,j,i}$. We say that $\square_{i,j,k,l}$ is a \emph{pure $4$-cycle switch}. Applying the  pure 4-cycle switch $\square_{i,j,k,l}$ and the complementary switch in the reverse order $\square^c_{l,k,j,i}=\square_{n+1-l,n+1-k,n+1-j,n+1-i}$ to a $T\in\mathcal{T}_H(R)$ results in another Hankel tournament in $\mathcal{T}_H(R)$.

In case (ii), the matrix $T[i,j,n+1-j,n+1-i]$ has the form 
$$\begin{array}{c||c|c|c|c}
&i&j&n+1-j&n+1-i \\ \hline \hline
i&0&1&a&0 \\ \hline
j&0&0&1&a \\ \hline
n+1-j&1-a&0&0&1 \\ \hline
n+1-i&1&1-a&0&0
\end{array}.$$
If we apply $\square_{i,j,n+1-j,n+1-i}$ to a $T\in\mathcal{T}_H(R)$ we obtain another Hankel tournament in $\mathcal{T}_H(R)$. In this case $\square_{i,j,n+1-j,n+1-i}=\square^c_{n+1-i,n+1-j,j,i}$ and we just need to apply one 4-cycle switch.

If $n$ is odd and $j=\frac{n+1}{2}$, then $T[i,j,n+1-j,n+1-i]$ collapses to the $3\times3$ matrix $T[i,\frac{n+1}{2},n+1-i]$, and if there is a  3-cycle we can apply the 3-cycle switch $\triangle_{i,\frac{n+1}{2},n+1-i}$, which reverses the orientation of the 3-cycle. We call such a switch a \emph{$3$-cycle Hankel switch}, when we have these  special indices. Applying a 3-cycle Hankel switch $\triangle_{i,\frac{n+1}{2},n+1-i}$ to a $T\in\mathcal{T}_H(R)$ results in another Hankel tournament in $\mathcal{T}_H(R)$.

When $R$ is nondecreasing, we now show how to use these switches to move  from any tournament $T\in\mathcal{T}_H(R)$ to $T^*\in\mathcal{T}_H(R)$ never leaving the class ${\mathcal T}_H(R)$. We consider two cases according to whether $n$ is even or odd.

\smallskip\noindent
{\bf Case $n$ even}:
 If the  first column of $T$ equals the first column of $T^*$, then their last columns, first rows, and last rows are equal, respectively.  Thus the {\it borders} of $T$ and $T^*$ agree; deleting their borders, we proceed by induction on $n$. Now suppose that the  borders of $T$ and $T^*$ are not equal. The first column of $T^*$ consists of $n-s_1=r_1+1$ 0s followed $s_1$ 1s. Since $t_{11}=0$,  there is a first $i\ne 1$ such that  $t_{i1}=1$ and $t_{j1}=0$ for some $j>i$.

First suppose that we may take $j=n$, that is, $t_{n1}=0$. Since $r_1$ is the smallest score, and $r_1+s_1=n-1$, we have that $s_1\ge (n/2)$.  Since $t_{n1}=0$, it follows that there exists an $1<i\le (n/2)$ such that $t_{i1}=t_{n+1-i,1}=1$. Since $T$ is a Hankel tournament,  we have
\[T[1,i,n+1-i,n]=\left[\begin{array}{c|c|c|c}
0&0&0&1\\ \hline
1&0&a&0\\ \hline
1&1-a&0&0\\ \hline
0&1&1&0\end{array}\right],\]
that is,
\[(a=0)\ \left[\begin{array}{c|c|c|c}
0&0&0&1\\ \hline
1&0&0&0\\ \hline
1&1&0&0\\ \hline
0&1&1&0\end{array}\right] \mbox{ or } (a=1)\ 
\left[\begin{array}{c|c|c|c}
0&0&0&1\\ \hline
1&0&1&0\\ \hline
1&0&0&0\\ \hline
0&1&1&0\end{array}\right],\]
In  the case that $a=0$,  we apply the $4$-cycle Hankel switch $\Box_{1,n,n+1-i,i}$ and the result  is a  tournament in ${\mathcal T}_H(R)$ in which $t_{i1}$ has been replaced with 0 and $t_{n1}$ has been replaced with 1.  In the case that $a=1$, we apply the 4-cycle Hankel switch $\Box_{1,n,i,n+1-i}$ and the result is a   tournament in ${\mathcal T}_H(R)$ in which $t_{n+1-i,1}$ has been replaced with 0 and $t_{n1}$ has been replaced with 1.

Now assume that $t_{n1}=1$ so that there exists  a $j$ with $i<j<n$ such that $t_{j1}=0$. First suppose that we may take $j=n+1-i$. Since $r_i<r_{n+1-i}$ there exists a $k$ with $k\ne 1,n+1-i$, such that $t_{ik}=0$ and $t_{n+1-i,k}=1$. Since $t_{i1}=1$ and $T$ is a Hankel tournament,  $t_{n,n+1-i}=1$  and hence $t_{n+1-i,n}=0$. Therefore $k\ne n$. In addition, because $r_i<r_{n+1-i}$, we may choose $k\ne i$. 
We now have
\[T[1,i,n+1-i,k]=\left[\begin{array}{c|c|c|c}
0&0&1&a\\ \hline
1&0&b&0\\ \hline
0&1-b&0&1\\ \hline
1-a&1&0&0\end{array}\right].\]
The switch $\Box_{1,n+1-i,k,i}$ is a pure 4-cycle switch, and $\Box_{1,n+1-i,k,i}$ followed by  $\Box_{i,k,n+1-i,1}^{c}$
produces a tournament in ${\mathcal T}_H(R)$ in which $t_{i1}$ has been replaced with 0 and $t_{n+1-i,1}$ has been replaced with 1.

The remaining possibility in the  $n$ even case is that   $t_{i1}=1$ implies that $t_{n+1-i,1}=1$, so we cannot take $j=n+1-i$. If $t_{ji}=1$, then
\[T[1,i,j]=\left[\begin{array}{c|c|c}
0&0&1\\ \hline
1&0&0\\ \hline
0&1&0\end{array}\right].\]
The  switch $\triangle_{1,j,i}$ 
is a pure 3-cycle switch, and $\triangle_{1,j,i}$ 
followed by $\triangle_{i,j,1}^c$ produces a tournament in ${\mathcal T}_H(R)$ in which $t_{i1}$ has been replaced with 0 and $t_{j1}$ has been replaced with 1.
Now suppose that $t_{ji}=0$ so that $t_{ij}=1$. Since $r_i\le r_j$, there exists at least two values of $k$  such that $k\ne 1,i,j$, and  $t_{ik}=0$ and $t_{jk}=1$. If $t_{j,n+1-i}=0$, then there exists such a $k\ne n+1-i,n+1-j$. If $t_{j,n+1-i}=1$, then  since $T$ is a Hankel tournament, $t_{i,n+1-j}=1$ and hence $k\ne n+1-j$, and since there are two possible values of  $k$, we may choose $k\ne n+1-i$.
Thus we have
\[T[1,i,j,k]=\left[\begin{array}{c|c|c|c}
0&0&1&a\\ \hline
1&0&1&0\\ \hline
0&0&0&1\\ \hline
1-a&1&0&0\end{array}\right].\]
The switch $\Box_{1,j,k,i}$ is a pure 4-cycle switch, and $\Box_{1,j,k,i}$ followed by  $\Box_{i,k,j,1}^{c}$
produces a tournament in ${\mathcal T}_H(R)$ in which $t_{i1}$ has been replaced with 0 and $t_{j1}$ has been replaced with 1.

Hence by induction, if $n$ is even, we can move from $T$ to $T^*$ by a sequence of $4$-cycle Hankel switches, and  pairs of switches consisting of a  pure $4$-cycle switch $\square_{i,j,k,l}$ and its complementary switch  in the reverse order
$\square_{l,k,j,i}^c$.

\smallskip\noindent
{\bf Case $n$ odd}:
 Then row $(n+1)/2$ of the  matrix $T^*$ constructed by the Hankel algorithm 
contains $(n-1)/2$ $1$s and they occur in its  first $(n-1)/2$ columns, thus determining both row and column $(n+1)/2$. We show that by $3$-cycle Hankel switches and pairs of switches consisting of  a pure $4$-cycle switch and the complementary switch in the reverse order, we can bring  any tournament $T\in {\mathcal T}(R)$ into a tournament $T'\in {\mathcal T}(R)$ which agrees with $T^*$ on row $(n+1)/2$, that is, has all its 1s in columns $1,2,\ldots,(n-1)/2$. We may then delete row and column $(n+1)/2$ of $T'$ and $T^*$ leaving  two $(n-1)\times (n-1)$ Hankel tournaments  $T''$ and $T^{**}$ with the same score vector $R'$. By what we have proved, $T''$ can be brought to $T^{**}$ 
by a sequence of $4$-cycle Hankel switches and  pairs of switches consisting of  a pure $4$-cycle switch and its complementary switch in the reverse order. It
 then follows that $T$ can be brought to $T^*$ by a sequence of 
 $3$-cycle Hankel switches, $4$-cycle Hankel switches, and pairs of switches consisting of  a pure $4$-cycle switch and its complementary switch in the reverse order. 
 
So suppose that $t_{(n+1)/2,i}=0$ for some $i\le (n-1)/2$. 
We consider the digraph $D$ with vertex set $\{1,2,\ldots,n\}$ with an edge from $i$ to $j$ if and only if $t_{ij}=1$ and $t_{ij}^*=0$. Since $T$ and $T^*$ have the same score vector, the outdegree of a vertex equals its indegree and hence 
the edges of $D$ can be partitioned into cycles. One of these cycles $C$  uses the vertex $(n+1)/2$:
\[\frac{n+1}{2}\rightarrow i_1\rightarrow i_2\rightarrow\cdots\rightarrow i_k\rightarrow \frac{n+1}{2}.\]
Since we are dealing with Hankel tournaments, $D$ also contains the edge $(n+1-i_1)\rightarrow \frac{n+1}{2}$. If $T$ also contains the edge $i_1\rightarrow (n+1-i_1)$, then we can apply the $3$-cycle Hankel switch  $\bigtriangleup_{i_1,\frac{n+1}{2},n+1-i_1}$ and move $T$ closer to $T^*$. If $T$ contains the edge $ (n+1-i_1)\rightarrow i_1$ (and so does not contain the edge $i_1\rightarrow (n+1-i_1)$), then we consider the edge $i_1\rightarrow i_2$. Then $n+1-i_2\rightarrow n+1-i_1$ is also an edge of $D$. If $i_2\rightarrow (n+1-i_2)$, then the $4$-cycle Hankel switch
$\Box_{n+1-i_1,i_1,i_2,n+1-i_2}$ reverses $(n+1-i_1)\rightarrow i_1$, and then we can apply $\bigtriangleup_{i_1,\frac{n+1}{2},n+1-i_1}$  as above. If, on the other hand, $(n+1-i_2)\rightarrow i_2$ is an edge of $T$, we consider the pair of vertices $(n+1-i_1)$ and $i_2$ (see Figure 1), and proceed as follows.

 If $i_2\rightarrow (n+1-i_1)$ is an edge of $T$, then we apply the 4-cycle Hankel switch $\Box_{i_2,n+1-i_1,i_1,n+1-i_2}$ to reverse $(n+1-i_1)\rightarrow i_1$ and then  $\bigtriangleup_{i,\frac{n+1}{2},n+1-i_1}$ as above. 

If $(n+1-i_1)\rightarrow i_2$ is an edge of $T$, then we consider the edge $i_2\rightarrow i_3$ where now $i_3\ne (n+1-i_1),(n+1-i_2)$.

\begin{center}\begin{tikzpicture}
  [scale=1,auto=left,every node/.style={circle,draw,scale=0.8}]
  \node[label=above:$\frac{n+1}{2}$] (0) at (0,1) {};
  \node[label=right:$i_1$] (1) at (1,0) {};
  \node[label=left:$n+1-i_1$] (2) at (-1,0) {};
  \node[label=right:$i_2$] (3) at (1,-1) {};
  \node[label=left:$n+1-i_2$] (4) at (-1,-1) {};
  \node[label=right:$i_3$] (5) at (1,-2) {};
  \draw[->-] (0) to (1);
  \draw[->-] (2) to (0);
  \draw[dashed,->-] (2) to (1);
  \draw[->-] (1) to (3);
  \draw[->-] (4) to (2);
  \draw[dashed,->-] (4) to (3);
  \draw[dashed,-->-] (3) to (2);
  \draw[dashed,-->-] (1) to (4);
  \draw[->-] (3) to (5);
\end{tikzpicture}\end{center}

\medskip
\centerline{Figure 1: Part of the digraph $D$.}

\noindent
We continue like this until we are able to find an $i_j\ne n+1-i_1,\ldots,n+1-i_{j-1}$ such that $i_j
\rightarrow (n+1-i_h)$ for some $h\le j$. There are two possibilities to consider. If we also have the edge $(n+1-i_j)\rightarrow i_j$ (only if $h<j$) , then we apply the $4$-cycle Hankel switch $\Box_{i_h,n+1-i_j, i_j,n+1-i_h}$ to reverse $(n+1-i_h)\rightarrow i_h$, followed by the $4$-cycle Hankel switches $\Box_{i_{l-1}i_l,n+1-i_l,n+1-i_{l-1}}$ with $l=h,h-1,\ldots,2$ to finally reverse $(n+1-i_1)\rightarrow i_1$, and then $\bigtriangleup_{i,\frac{n+1}{2},n+1-i_1}$.
If for all $j$ with $2\le j\le k$, $i_j\rightarrow (n+1-i_h)$ is not an edge of $T$ for a $h$ with $1\le h\le j$, then since $i_k\rightarrow
(n+1)/2$ is an edge of cycle $C$, the cycle $C$ does not contain both a vertex $p$ and a vertex $n+1-p$. Thus using a sequence of pairs of pure 3-cycle switches and their complementary 3-cycle switches in the reverse order, we can reverse $C$ and its complementary cycle of opposite orientation. By these operations we have obtained from $T$ a Hankel tournament in ${\mathcal T}_H(R)$
which agrees with $T^*$ in row and column $(n+1)/2$.

 Putting the preceding arguments together and having in mind that a pure 4-cycle switch can be accomplished by two 3-cycle switches,  we have proved the following theorem.

\begin{theorem}\label{th:switch}
Let $T_1$ and $T_2$ be two tournaments in ${\mathcal T}_H(R)$. Then there exists a sequence of moves consisting of $4$-cycle Hankel switches, pairs consisting of a pure $3$-cycle switch
and its complementary $3$-cycle switch in the reverse order, and $3$-cycle Hankel switches, which  brings $T_1$ to $T_2$ with all intermediary tournaments in ${\mathcal T}_H(R)$.
\end{theorem}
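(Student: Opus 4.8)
The plan is to pin down the canonical Hankel tournament $T^{*}\in\mathcal T_H(R)$ produced by the Hankel algorithm (Theorem~\ref{th:alghankel}) and to show that an arbitrary $T\in\mathcal T_H(R)$ can be carried to $T^{*}$ using only $4$-cycle Hankel switches, $3$-cycle Hankel switches, and pairs consisting of a pure $3$-cycle switch immediately followed by its complementary $3$-cycle switch, never leaving $\mathcal T_H(R)$. Since every one of these moves is reversible and its inverse is again a move of the same kind, the statement for a general pair $T_1,T_2$ then follows by concatenating a sequence $T_1\to T^{*}$ with the reverse of a sequence $T_2\to T^{*}$. I would also reduce to $R$ nondecreasing: a general score vector is a simultaneous row/column permutation of a nondecreasing one by the interchanges used in the proof of Lemma~\ref{lem:monotone}, each such interchange commutes with $i\mapsto n+1-i$, and hence conjugating by a product of them turns every listed switch into a switch of the same type (in particular it preserves the property that the index set is invariant under $i\mapsto n+1-i$, which is exactly what distinguishes the Hankel switches, and it preserves disjointness of the two complementary index sets, i.e.\ purity). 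So fix $R$ nondecreasing and argue by induction on $n$, the cases $n\le 2$ being trivial.

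\textbf{Case $n$ even.} Here the Hankel algorithm outputs a tournament whose first column is $r_1+1$ zeros followed by $s_1$ ones, and by the combinatorial skew-symmetry together with the Hankel symmetry this first column forces the whole \emph{border} (first and last rows and columns). It therefore suffices to transform the first column of $T$ into that of $T^{*}$: once the borders agree one deletes them and applies the induction hypothesis to the $(n-2)\times(n-2)$ Hankel tournament that remains. If the first column of $T$ is not yet sorted, let $i>1$ be least with $t_{i1}=1$ and let $j>i$ with $t_{j1}=0$; one distinguishes the subcases $j=n$, $j=n+1-i$, and the remaining case, together with the value of the Hankel-diagonal entry inside the relevant $3\times 3$ or $4\times 4$ submatrix and of the adjacency $t_{ji}$. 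In each subcase one reads off the forced form of the small submatrix $T[1,i,n+1-i,n]$, of $T[1,i,n+1-i,k]$ for a $k$ chosen using $r_i<r_{n+1-i}$ or $r_i\le r_j$, or of $T[1,i,j]$, $T[1,i,j,k]$, and applies either a single $4$-cycle Hankel switch or a pure $4$-cycle (resp.\ pure $3$-cycle) switch immediately followed by its complementary switch in the reverse order; in each subcase this produces a Hankel tournament in $\mathcal T_H(R)$ in which $t_{i1}$ has become $0$ and one later first-column entry has become $1$, without disturbing the already-fixed top rows. Iterating sorts the first column, and induction finishes this case.

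\textbf{Case $n$ odd.} Now the algorithm fixes row and column $(n+1)/2$: row $(n+1)/2$ of $T^{*}$ has its $(n-1)/2$ ones in the first $(n-1)/2$ columns. I would first bring $T$ to a $T'\in\mathcal T_H(R)$ agreeing with $T^{*}$ on row $(n+1)/2$, and then delete row and column $(n+1)/2$, leaving an $(n-1)\times(n-1)$ Hankel tournament handled by the even case. To fix row $(n+1)/2$, suppose $t_{(n+1)/2,i}=0$ for some $i\le(n-1)/2$ and form the difference digraph $D$ on $\{1,\dots,n\}$ with an arc $p\to q$ iff $t_{pq}=1$ and $t^{*}_{pq}=0$. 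Since $T$ and $T^{*}$ share the score vector $R$, every vertex of $D$ is balanced, so $E(D)$ splits into cycles; pick the cycle $C\colon \tfrac{n+1}{2}\to i_1\to i_2\to\cdots\to i_k\to\tfrac{n+1}{2}$. By Hankel symmetry $D$ also contains $(n+1-i_1)\to\tfrac{n+1}{2}$, and one walks along $C$ tracking the partner $n+1-i_1$: at each step either one finds, for some $h\le j$, an arc $i_j\to(n+1-i_h)$ of $T$, which lets one ``unwind'' the arc $(n+1-i_1)\to i_1$ by a bounded chain of $4$-cycle Hankel switches $\square_{i_{l-1},i_l,\,n+1-i_l,\,n+1-i_{l-1}}$ and then fire the $3$-cycle Hankel switch $\triangle_{i_1,(n+1)/2,\,n+1-i_1}$; or no such arc ever appears, and then $C$ contains no antipodal pair $\{p,n+1-p\}$, so $C$ and its opposite-orientation complement can be reversed together by a sequence of pairs of pure $3$-cycle switches and complementary $3$-cycle switches in the reverse order. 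Either way $t_{(n+1)/2,i}$ is corrected; repeating over the finitely many bad entries of row $(n+1)/2$ and then invoking the even case yields the odd case.

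I expect the main obstacle to be the bookkeeping in the odd case: one must verify that, while walking along $C$, the partners $n+1-i_1,\dots,n+1-i_{j-1}$ stay distinct from the vertices $i_1,\dots,i_j$ met so far, so that the successive $4$-cycle Hankel switches are genuinely applicable and do not overwrite one another's data, and that failure of the ``antipodal'' condition really forces the alternative in which $C$ avoids every pair $\{p,n+1-p\}$. One must also check throughout the even case that each complementary switch applied in the reverse order restores the Hankel symmetry and touches only entries of the $4\times4$ submatrix in question --- precisely the content of the purity hypothesis in each subcase. Once these verifications are in place, combining the two inductions with the reduction to $T^{*}$ and reversibility proves the theorem; and since a pure $4$-cycle switch equals two $3$-cycle switches, the list of permitted moves could even be shortened.
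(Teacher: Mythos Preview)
Your proposal is correct and follows essentially the same approach as the paper: reduce to the canonical $T^{*}$ produced by the Hankel algorithm, handle the even case by sorting the first column via the same subcase analysis ($j=n$, $j=n+1-i$, and the remaining case split on $t_{ji}$), and handle the odd case by fixing row $(n+1)/2$ via the difference digraph and the walk along a cycle through $(n+1)/2$, applying $4$-cycle Hankel switches to unwind $(n+1-i_1)\to i_1$ or, failing that, reversing the antipodal-free cycle by paired pure $3$-cycle switches. Your identified obstacle---the bookkeeping that the $4$-cycle Hankel switches in the odd case are genuinely applicable and that failure of the antipodal condition forces $C$ to avoid every pair $\{p,n+1-p\}$---is exactly where the paper's argument is most delicate, and your closing remark about replacing pure $4$-cycle switches by two $3$-cycle switches is also made in the paper.
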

 
\smallskip\noindent
\begin{example}{\rm Let $n=7$ and $R=(1,2,2,3,4,4,5)$, and consider the two Hankel tournaments $T_1$ and $T_2$ in ${\mathcal T}_H(R)$ given by
\[
T_1=\left[\begin{array}{c|c|c|c|c|c|c}
0&0&0&0&0&0&1\\ \hline
1&0&0&0&0&1&0\\ \hline
1&1&0&0&0&0&0\\ \hline
1&1&1&0&0&0&0\\ \hline
1&1&1&1&0&0&0\\ \hline
1&0&1&1&1&0&0\\ \hline
0&1&1&1&1&1&0\end{array}\right]
\mbox{ and }
T_2=\left[\begin{array}{c|c|c|c|c|c|c}
0&1&0&0&0&0&0\\ \hline
0&0&1&1&0&0&0\\ \hline
1&0&0&0&1&0&0\\ \hline
1&0&1&0&0&1&0\\ \hline
1&1&0&1&0&1&0\\ \hline
1&1&1&0&0&0&1\\ \hline
1&1&1&1&1&0&0\end{array}\right].
\]
We can move from $T_1$ to $T_2$ by the 4-cycle Hankel switch
$\square_{7,5,3,1}$, followed by  the pair of pure 3-cycle switches $\bigtriangleup_{3,2,1}$ and $\bigtriangleup_{1,2,3}^c=\bigtriangleup_{7,6,5}$,  followed by the 3-cycle Hankel switch
$\bigtriangleup_{6,4,2}$.

}
\end{example}

For completeness we mention the following. It is natural as well to  consider $n\times n$  {\it Hankel loopy tournaments} $T=[t_{ij}]$, that is, Hankel tournaments
with possible $1$s on the main diagonal. By the Hankel property, we now have that  for all $i$,
$t_{ii}=t_{n+1-i,n+1-i}$ where the common value is either 0 or 1.
The score vector $R=(r_1,r_2,\ldots,r_n)$ of a  Hankel loopy tournament satisfies
$r_i+r_{n+1-i}= n+1\mbox{ or } n-1$  depending on whether or not $t_{ii}=t_{n+1-i,n+1-i}=1$.
Thus the score vector of a  Hankel loopy tournament determines which elements on the main diagonal equal 1 and which equal 0. Thus if $R'$ is obtained from $R$ by subtracting 1 from those $i$ and $n+1-i$ for which $r_i+r_{n+1-i}=n+1$, then there is a  Hankel loopy tournament with score vector $R$ if and only if there is a Hankel tournament with score vector $R'$. 

\begin{example}{\rm The  Hankel loopy tournament
\[T=\left[\begin{array}{c|c|c|c|c}
0&0&0&1&1\\ \hline
1&1&0&0&1\\ \hline
1&1&1&0&0\\ \hline
0&1&1&1&0\\ \hline
0&0&1&1&0\end{array}\right]\]
with score vector $R=(2,3,3,3,2)$ corresponds to the Hankel tournament
\[T'=\left[\begin{array}{c|c|c|c|c}
0&0&0&1&1\\ \hline
1&0&0&0&1\\ \hline
1&1&0&0&0\\ \hline
0&1&1&0&0\\ \hline
0&0&1&1&0\end{array}\right]\]
with score vector $R'=(2,2,2,2,2)$.
}
\end{example}

\section{Combinatorially skew-Hankel Tournaments}

Let $T=[t_{ij}]$ be an $n\times n$ combinatorially skew-Hankel tournament, and let the score vector of $T$ be $R=(r_1,r_2,\ldots,r_n)$.  Since $T$ is invariant under a rotation by 180 degrees, we have that for each $i$, row $n+1-i$ is obtained by reversing row $i$, and thus  the score vector of $T$ is {\it palindromic}, that is,
\[R=(r_1,r_2,r_3,\ldots,r_3,r_2,r_1).\]
Since $T$ is combinatorially skew with respect to the main diagonal, row $i$ of $T$ not only determines row $n+1-i$ but it  also determines columns $i$ and $n+1-i$.
 %The  $2\times 2$ principal submatrix $T[\{i,n+1-i\}]$ of $T$ 
%determined by rows and columns $i$ and $n+1-i$ is of the form
%[\left[\begin{array}{cc}
%0&b\\1-b&0\end{array}\right],\]
%where the positions of $b$ and $1-b$ are on the Hankel diagonal. 
We illustrate this important property in the next example.

\begin{example}{\rm
Let $T$ be an $8\times 8$ combinatorially skew-Hankel tournament with row 3 given.
Then the entries in column 3, and row and column $8+1-3=6$ are as shown in
\[\left[\begin{array}{c|c||c||c|c||c||c|c}
\phantom{11}0\phantom{11}&\phantom{1-a}&1-a&\phantom{1-a}&\phantom{1-a}&1-f&\phantom{1-1}&\phantom{11}0\phantom{11}\\ \hline
&0&1-b&&&1-e&0&\\ \hline\hline
a&b&0&c&d&0&e&f\\ \hline\hline
&&1-c&0&0&1-d&&\\ \hline
&&1-d&0&0&1-c&&\\ \hline\hline
f&e&0&d&c&0&b&a\\ \hline\hline
&0&1-e&&&1-b&0&\\ \hline
0&&1-f&&&1-a&&0\end{array}\right],\]
where the 0s on the main and Hankel diagonals have been inserted.}
\end{example}

The score vector of a combinatorially skew-Hankel tournament is determined by $n/2$ integers if $n$ is even, and by $(n+1)/2$ integers if $n$ is odd. If we reorder the  rows $1,2,\ldots,\lfloor n/2\rfloor$ of $T$ and then reorder rows $\lceil 
n/2\rceil, \ldots, n-1,n$ in the same way, and similarly reorder columns $1,2,\ldots,\lfloor n/2\rfloor$ and columns $\lceil  n/2\rceil, \ldots,n-1,n$ in the corresponding way, the result is another combinatorially skew-Hankel tournament. Hence there is no loss of generality in assuming that the first half $(r_1,r_2,\ldots,r_{\lfloor n/2\rfloor})$ of $R$ is nondecreasing and thus that the second half $(r_{\lfloor n/2\rfloor},\ldots,r_2,r_1)$ is nonincreasing.  If $n$ is odd, $R$ has a middle term $r_{(n+1)/2}$ which is even 
since $t_{(n+1)/2,(n+1)/2}=0$,  and both row  $(n+1)/2$ and column $(n+1)/2$  of $T$ are palindromic. 

\begin{example}{\rm A $7\times 7$ combinatorially
 skew-Hankel tournament  with palindromic score vector $R=(1,3,4,2,4,3,1)$ is given by
\[\left[\begin{array}{c|c|c|c|c|c|c}
0&0&1&0&0&0&0\\ \hline
1&0&0&1&0&0&1\\ \hline
0&1&0&1&0&1&1\\ \hline
1&0&0&0&0&0&1\\ \hline
1&1&0&1&0&1&0\\ \hline
1&0&0&1&0&0&1\\ \hline
0&0&0&0&1&0&0\end{array}\right].\]

}
\end{example}

We first treat the case where $n$ is even.  In this case a combinatorially skew-Hankel tournament is of the form
\[
\left[\begin{array}{cc}
T_1&T_2\\
T_2^{th}& T_1^{ht}\end{array}\right]\]
where $T_1$ is an $\frac{n}{2}\times\frac{n}{2}$ tournament and $T_2$, when its columns are taken from last to first,
is an $\frac{n}{2}\times\frac{n}{2}$ tournament.

Let 
$R=(r_1,r_2,\ldots,r_{n/2},r_{n/2},\ldots,r_2,r_1)$
satisfy $r_1\le r_2\le\cdots\le r_{n/2}$. The nondecreasing rearrangement of $R$ is
$R^*=(r_1^*,r_2^*,\ldots,r_n^*)=(r_1,r_1,r_2,r_2,\ldots,r_{n/2},r_{n/2})$ where therefore
$r^*_{2k-1}=r^*_{2k}=r_k$ for $k=1,2,\ldots,n/2$. Since in an $n\times n$ combinatorially skew-Hankel tournament  $T=[t_{ij}]$, $t_{kk}=t_{k,n+1-k}=t_{n+1-k,n+1-k}=t_{n+1-k,k}=0$, the sum of the $l$ smallest elements of $R$, that is, the first $l$ elements of $R^*$, must be at least the number 
${l\choose 2}-\left\lfloor\frac{l}{2}\right\rfloor$
of games played amongst themselves, and thus
\begin{equation}\label{eq:self}
\sum_{i=1}^l r_i^*\ge {l\choose 2}-\left\lfloor\frac{l}{2}\right\rfloor,\ (l=1,2,\ldots,n),  \mbox{ with equality if $l=n$.}\end{equation}

\begin{lemma}\label{lem:skew1}
Let  $h$,  with $1\le h\le n/2$, be such that 
equality holds in $(\ref{eq:self})$  for $l=2h-1$. Then equality also holds in $(\ref{eq:self})$ for $l=2h$.
\end{lemma}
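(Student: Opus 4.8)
I would reduce the lemma to arithmetic on the partial sums of $R^*$, using the pairing $r^*_{2k-1}=r^*_{2k}=r_k$ recorded just above.

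The first step is to evaluate the right-hand side of $(\ref{eq:self})$ in closed form for $l=2h-2$, $2h-1$, $2h$. Using $\lfloor(2h-2)/2\rfloor=\lfloor(2h-1)/2\rfloor=h-1$ and $\lfloor 2h/2\rfloor=h$, one obtains respectively $2(h-1)(h-2)$, $2(h-1)^2$, and $2h(h-1)$. Thus the hypothesis says $\sum_{i=1}^{2h-1}r_i^*=2(h-1)^2$, and the goal is to show $\sum_{i=1}^{2h}r_i^*=2h(h-1)$.

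The second step is to pin down the single entry $r_h=r^*_{2h-1}=r^*_{2h}$ by a two-sided bound. Since $r_h=\sum_{i=1}^{2h-1}r_i^*-\sum_{i=1}^{2h-2}r_i^*$, the assumed equality at $l=2h-1$ combined with the $l=2h-2$ instance of $(\ref{eq:self})$ gives $r_h\le 2(h-1)^2-2(h-1)(h-2)=2(h-1)$ (for $h=1$ this reads $r_1\le0$, using the empty sum). On the other hand, the $l=2h$ instance of $(\ref{eq:self})$ gives $\sum_{i=1}^{2h}r_i^*=2(h-1)^2+r_h\ge 2h(h-1)$, hence $r_h\ge2(h-1)$.

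Combining the two bounds forces $r_h=2(h-1)$, so $\sum_{i=1}^{2h}r_i^*=2(h-1)^2+2(h-1)=2h(h-1)$, which is precisely equality in $(\ref{eq:self})$ at $l=2h$. There is no serious obstacle here: the only point needing care is that the instances $l=2h-2,2h-1,2h$ of $(\ref{eq:self})$ are all legitimate, which holds because $1\le h\le n/2$ (so $0\le 2h-2$ and $2h\le n$). Everything else is bookkeeping with binomial coefficients and the consecutive-equal-pairs structure of $R^*$.
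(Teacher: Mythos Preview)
Your proof is correct and follows essentially the same approach as the paper: both squeeze $r_h=r^*_{2h-1}=r^*_{2h}$ between $2(h-1)$ from below (via the $l=2h$ instance of $(\ref{eq:self})$) and $2(h-1)$ from above (via the $l=2h-2$ instance), then substitute back. The only cosmetic difference is that the paper treats $h=1$ as a separate base case, whereas you fold it into the general computation by invoking the empty-sum convention at $l=0$; both are fine.
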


\begin{proof}
We have
$\sum_{i=1}^{2h-1} r_i^*= {2h-1\choose 2}-(h-1)$. If $h=1$, then $r_1^*=0$ and hence $r_2^*=0$,  and so equality holds for $l=2$.
Now let $h\ge 2$. Then
\[r_{2h}^*=\sum_{i=1}^{2h}r_i^*-\sum_{i=1}^{2h-1}r_i^*\ge 
{{2h}\choose 2}-h-\left({{2h-1}\choose 2}-(h-1)\right)=2h-2,\]
and
\[r_{2h-1}^*=\sum_{i=1}^{2h-1}r_i^*-\sum_{i=1}^{2h-2}r_i^*\le 
\left({{2h-1}\choose 2}-(h-1)\right)-\left({{2h-2}\choose 2}-(h-1)\right)=2h-2.\]
Thus
$2h-2\le r_{2h}^*=r_h=r_{2h-1}^*\le 2h-2$, and we conclude that
$r_{2h}^*=2h-2$. With this value for $r_{2h}^*$, we get
\[\sum_{i=1}^{2h}r_i^*=\left(\sum_{i=1}^{2h-1}r_i^*\right)+r_{2h}^*=
{{2h-1}\choose 2}-(h-1)+(2h-2)={{2h}\choose 2}-h,\]
and thus equality holds in (\ref{eq:self}).
\end{proof}

By taking $l=2k$ in (\ref{eq:self}), we see that  the inequalities (\ref{eq:self}) for $l$ even are equivalent to
\begin{equation}\label{eq:self2}
\sum_{i=1}^kr_i\ge k(k-1),\ (k=1,2,\ldots,\frac{n}{2}), \mbox{ with equality for $k=\frac{n}{2}$.}\end{equation}
In the next lemma we show that (\ref{eq:self}) is equivalent to (\ref{eq:self2}) even if $l$ is odd.
  
  \begin{lemma}\label{lem:skew2}
  The vector $R^*$ satisfies $(\ref{eq:self})$ if and only if $R$ satisfies $(\ref{eq:self2})$.
  \end{lemma}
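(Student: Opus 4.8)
The plan is to treat the two implications separately, exploiting the fact that the partial sums of $R^*$ are built from partial sums of $R$ in a completely explicit way (recall throughout that $n$ is even here).

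First I would record the arithmetic identities that make everything transparent. Since $r^*_{2k-1}=r^*_{2k}=r_k$, we have $\sum_{i=1}^{2k}r_i^* = 2\sum_{j=1}^k r_j$ and $\sum_{i=1}^{2h-1}r_i^* = \bigl(\sum_{j=1}^{h-1}r_j\bigr)+\bigl(\sum_{j=1}^{h}r_j\bigr)$ for all admissible $k$ and $h$. On the right-hand side of $(\ref{eq:self})$ one computes $\binom{2k}{2}-\lfloor 2k/2\rfloor = k(2k-1)-k = 2k(k-1)$ and $\binom{2h-1}{2}-\lfloor(2h-1)/2\rfloor = (2h-1)(h-1)-(h-1) = 2(h-1)^2$.

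For the forward implication I would simply restrict $(\ref{eq:self})$ to even indices $l=2k$: using the identity above it becomes $2\sum_{j=1}^k r_j \ge 2k(k-1)$, i.e.\ $\sum_{j=1}^k r_j\ge k(k-1)$, and the equality hypothesis at $l=n$ gives equality at $k=n/2$, which is exactly $(\ref{eq:self2})$. For the converse, assuming $(\ref{eq:self2})$, I would verify $(\ref{eq:self})$ for every $l$. For even $l=2k$ the previous computation runs backwards. For odd $l=2h-1$ I would use the two instances $\sum_{j=1}^{h-1}r_j\ge (h-1)(h-2)$ and $\sum_{j=1}^{h}r_j\ge h(h-1)$ of $(\ref{eq:self2})$ (the first being the vacuous statement $0\ge 0$ when $h=1$), add them, and obtain $\sum_{i=1}^{2h-1}r_i^* \ge (h-1)(h-2)+h(h-1) = (h-1)(2h-2)=2(h-1)^2$, which is precisely the bound demanded by $(\ref{eq:self})$; equality in $(\ref{eq:self})$ at $l=n$ is again the even case $k=n/2$.

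There is essentially no hard step: the whole lemma collapses to the two elementary identities for the $\binom{\cdot}{2}-\lfloor\cdot/2\rfloor$ expressions together with the observation that an odd-length partial sum of $R^*$ equals the sum of two consecutive partial sums of $R$. The only points requiring a moment's care are the boundary case $h=1$ (empty sum equal to $0$) and checking the floors, namely $\lfloor 2k/2\rfloor = k$ and $\lfloor(2h-1)/2\rfloor=h-1$. One could alternatively route the odd-index inequalities through Lemma~\ref{lem:skew1}, but the direct estimate above is shorter and also delivers the equality cases for free.
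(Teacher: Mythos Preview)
Your proof is correct. The difference from the paper's argument lies in how you handle the odd-index inequalities in the converse direction. The paper argues by contradiction: assuming $\sum_{i=1}^{2k+1}r_i^*<\binom{2k+1}{2}-k$, it bounds $r_{2k+1}^*<2k$, uses $r_{2k+2}^*=r_{2k+1}^*$, and pushes the deficit forward to contradict the already-established even case $l=2k+2$. You instead observe that an odd partial sum of $R^*$ decomposes as $\sum_{j=1}^{h-1}r_j+\sum_{j=1}^{h}r_j$ and simply add two consecutive instances of $(\ref{eq:self2})$, matching the target $2(h-1)^2$ exactly. Your route is shorter and entirely direct, avoiding the indirection through $r_{2k+1}^*$; the paper's route, on the other hand, is the same mechanism used in Lemma~\ref{lem:skew1} and so fits the surrounding narrative. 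Either way the content is the same elementary arithmetic, and your treatment of the boundary case $h=1$ and the equality at $l=n$ is fine.
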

  
  \begin{proof}
  The inequalities (\ref{eq:self}) for $l$ even  are equivalent to  the inequalities
  (\ref{eq:self2}) for $k=1,2,\ldots, n/2$. Thus  we need only show that if the inequalities (\ref{eq:self2}) hold, then the inequalities (\ref{eq:self}) hold for odd $l$.
  Let $l=2k+1$ and suppose to the contrary that
  $\sum_{i=1}^{2k+1}r_i^*<{{2k+1}\choose 2}-k$. Then   
  \[r_{2k+1}^*=\sum_{i=1}^{2k+1}r_i^*-\sum_{i=1}^{2k}r_i^*
  <\left({{2k+1}\choose 2}-k\right)-\left({{2k}\choose 2}-k\right)=2k.
\]
  Since $r_{2k+2}^*=r_{2k+1}^*$, we obtain
  \[\sum_{i=1}^{2k+2}r_i^*=\left(\sum_{i=1}^{2k+1}r_i^*\right)+r_{2k+2}^*<
 \left( {{2k+1}\choose 2}-k\right)+2k={{2k+2}\choose 2}-(k+1),\]
  and this contradicts (\ref{eq:self}) and (\ref{eq:self2}).
  \end{proof}
  
  We now verify that it is enough for us  to assume that
   $(r_1,r_2,\ldots,r_{(n+1)/2})$ is 3-nearly nondecreasing.

  \begin{lemma}\label{lem:skew3}
  Let  $(r_1,r_2,\ldots,r_{m})$ be a sequence of nonnegative integers that satisfies $(\ref{eq:self2})$ such that $R$ is $3$-nearly nondecreasing. Then its nondecreasing rearrangement also satisfies $(\ref{eq:self2})$.
  \end{lemma}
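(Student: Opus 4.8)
The plan is to imitate the proof of Lemma~\ref{lem:nearly}, with the Landau-type bound $\binom{k}{2}$ replaced by the bound $k(k-1)$ of~(\ref{eq:self2}) and with the admissible ``jump size'' enlarged from $2$ to $3$ to match the hypothesis that $R=(r_1,r_2,\ldots,r_m)$ is $3$-nearly nondecreasing. I would induct on the number $\alpha$ of pairs $(i,j)$ with $i<j$ and $r_i>r_j$ (equivalently, since $R$ is $3$-nearly nondecreasing, with $r_i-3\le r_j\le r_i-1$). If $\alpha=0$ then $R$ is already nondecreasing and there is nothing to prove. If $\alpha>0$ then $R$ has an adjacent descent, i.e.\ an index $p$ with $1\le p\le m-1$ and $r_p>r_{p+1}$; since $R$ is $3$-nearly nondecreasing, $r_{p+1}\ge r_p-3$, so $l:=r_p-r_{p+1}\in\{1,2,3\}$. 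Interchanging $r_p$ and $r_{p+1}$ keeps the vector $3$-nearly nondecreasing (a routine check on each pair of indices meeting $\{p,p+1\}$), leaves the multiset of entries unchanged, and decreases $\alpha$ by exactly $1$ (for $q\notin\{p,p+1\}$ the two pairs joining $q$ to $\{p,p+1\}$ contribute the same number of inversions before and after, while the pair $(p,p+1)$ stops being an inversion). So the induction will close once I check that the interchanged vector $R'$ still satisfies~(\ref{eq:self2}).

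The interchange affects only the $p$-th partial sum; in particular, since $p\le m-1$, it leaves $\sum_{i=1}^{m}r_i$ unchanged, so the equality required at $k=m$ is preserved. Thus it suffices to show that the new $p$-th partial sum $\sum_{i=1}^{p-1}r_i+r_{p+1}=\sum_{i=1}^{p}r_i-l$ is at least $p(p-1)$. Suppose not. Writing $\sum_{i=1}^{p}r_i=p(p-1)+h$, we have $h\ge 0$ by~(\ref{eq:self2}) at $k=p$, and the assumed failure gives $h\le l-1$. Now~(\ref{eq:self2}) at $k=p+1$ yields
\[
r_{p+1}=\sum_{i=1}^{p+1}r_i-\sum_{i=1}^{p}r_i\ge (p+1)p-\bigl(p(p-1)+h\bigr)=2p-h,
\]
and~(\ref{eq:self2}) at $k=p-1$ yields
\[
r_p=\sum_{i=1}^{p}r_i-\sum_{i=1}^{p-1}r_i\le \bigl(p(p-1)+h\bigr)-(p-1)(p-2)=2p-2+h.
\]
Combining these with $r_p=r_{p+1}+l$ gives $2p-h+l\le 2p-2+h$, i.e.\ $l\le 2h-2$; together with $h\le l-1$ this forces $l\le 2(l-1)-2=2l-4$, so $l\ge 4$, contradicting $l\le 3$. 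Hence $R'$ satisfies~(\ref{eq:self2}), and since $\alpha$ has dropped, the lemma follows by induction.

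I do not expect a genuine obstacle here. The two points deserving a line of care are that the jump $l$ is indeed at most $3$ (this is exactly the $3$-nearly nondecreasing hypothesis applied to the consecutive indices $p<p+1$) and that the interchange strictly lowers $\alpha$ without disturbing the $k=m$ equality (which holds because the swapped entries are adjacent and lie among the positions $1,\ldots,m-1$). The displayed inequalities are what pin the constant at $3$: the same computation carried out with the bound $\binom{k}{2}$ only tolerates a jump of $2$ (as in Lemma~\ref{lem:nearly}), and the passage to $k(k-1)$ --- forced by the combinatorially skew-Hankel constraints $t_{kk}=t_{k,n+1-k}=t_{n+1-k,n+1-k}=t_{n+1-k,k}=0$ --- is precisely what makes a jump of $3$ affordable.
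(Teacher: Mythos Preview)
Your proof is correct and follows essentially the same approach as the paper's: both argue by swapping an adjacent descent $r_p>r_{p+1}$ with drop $l\in\{1,2,3\}$, assume the new $p$-th partial sum violates~(\ref{eq:self2}), bound $r_{p+1}\ge 2p-h$ and $r_p\le 2p-2+h$ from~(\ref{eq:self2}) at $k=p\pm1$, and derive $l\ge 4$. Your write-up is in fact more careful than the paper's, which simply asserts ``it is enough to show'' the single-swap step without spelling out the induction on $\alpha$, the preservation of the $3$-nearly nondecreasing property, or the invariance of the $k=m$ equality.
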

  
  \begin{proof}
  It is enough to show that if for some $k$, $r_k=a$ and $r_{k+1}=a-p$ where $1\le p\le 3$, then interchanging $r_k$ and $r_{k+1}$ results in a vector also satisfying (\ref{eq:self2}).  If this was false, then we have that
  $\sum_{i=1}^kr_i=k(k-1)+h$ where $0\le h\le p-1$. Then
  \[r_{k+1}=\sum_{i=1}^{k+1}r_i-\sum_{i=1}^kr_i\ge (k+1)k-k(k-1)-h=2k-h.\]
  But we also have
  \[r_{k}=\sum_{i=1}^kr_i-\sum_{i=1}^{k-1}r_i\le k(k-1)+h-(k-1)(k-2)=2k-2+h.\]
  Therefore
  \[(2k-h)+p\le r_{k+1}+p=r_k\le 2k-2+h,\]
  implying that $h\ge (p/2)+1$, a contradiction since $p\le3$.
  \end{proof}

\begin{theorem}\label{th:skew}
Let $n$ be an even integer, and let $R=(r_1,r_2,\ldots,r_{n/2},r_{n/2},\ldots,r_2,r_1)$ be a vector of nonnegative integers such that $(r_1,r_2,\ldots, r_{n/2})$ is nondecreasing. Then there exists a combinatorially skew-Hankel tournament with score vector $R$ if and only if
%\begin{equation}\label{eq:skew1}
%\sum_{i=1}^kr_i\ge k(k-1), \  (k=1,2,\ldots,\frac{n}{2}),  \mbox{ with equality when $k=\frac{n}{2}$}.
$(\ref{eq:self2})$ holds.
%\end{equation}
\end{theorem}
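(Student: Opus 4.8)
The plan is to follow closely the scheme of the proof of Theorem~\ref{th:Hankel}, arguing by a minimal counterexample. Necessity is already in hand: if a combinatorially skew-Hankel tournament with score vector $R$ exists, the $l$ lowest-scoring players play exactly ${l\choose 2}-\lfloor l/2\rfloor$ games among themselves, so the nondecreasing rearrangement $R^*$ satisfies $(\ref{eq:self})$, and Lemma~\ref{lem:skew2} then gives $(\ref{eq:self2})$ for $R$. For sufficiency, suppose the converse fails; choose a counterexample $R=(r_1,\dots,r_{n/2},r_{n/2},\dots,r_1)$ with nondecreasing first half, satisfying $(\ref{eq:self2})$, for which no combinatorially skew-Hankel tournament exists, with $n$ minimum and, among those, with $r_1$ minimum. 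Write $m=n/2$; the case $n\le 2$ forces $R=(0,0)$, realized by the zero matrix, so $m\ge 2$. I would then split into the same two cases as in Theorem~\ref{th:Hankel}.

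\emph{Case 1: $(\ref{eq:self2})$ is strict for every $k$ with $1\le k\le m-1$.} Then $r_1\ge 1$. Let $R'$ be obtained from $R$ by replacing $r_1$ by $r_1-1$ and $r_m$ by $r_m+1$ (and symmetrically in the second half). One checks that $R'$ again has nondecreasing first half and satisfies $(\ref{eq:self2})$, and $r_1'<r_1$, so by minimality there is a combinatorially skew-Hankel tournament $T'=[t_{ij}']$ with score vector $R'$. Since row $m$ of $T'$ has $(r_m-r_1)+2\ge 2$ more $1$s than row $1$, I can pick a column $p\notin\{1,m,n+1-m,n\}$ with $t'_{mp}=1$ and $t'_{1p}=0$. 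The operation to apply is the switch that toggles the eight entries forming the orbits of $(1,p)$ and of $(m,p)$ under the group generated by the transpose-skew and the $180^{\circ}$ rotation, namely $(1,p),(p,1),(n,n+1-p),(n+1-p,n)$ and $(m,p),(p,m),(n+1-m,n+1-p),(n+1-p,n+1-m)$; because the six indices $1,m,n+1-m,n,p,n+1-p$ are distinct, these eight positions are distinct and off both diagonals. The toggled matrix is again combinatorially skew-Hankel, and a direct bookkeeping shows that the only scores changed are those of players $1$ and $n$ (each $+1$) and players $m$ and $n+1-m$ (each $-1$); hence it lies in ${\mathcal T}_{H^*}(R)$, a contradiction.

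\emph{Case 2: equality holds in $(\ref{eq:self2})$ for some $k$ with $1\le k\le m-1$} (this includes the subcase $r_1=0$, where $k=1$). Then the $2k$ lowest-scoring players $L=\{1,\dots,k\}\cup\{n+1-k,\dots,n\}$ play exactly ${2k\choose 2}-k$ games among themselves and their scores sum to that number, so every player of $L$ loses all its games against $M=\{k+1,\dots,n-k\}$. Thus $T$ has the block form in which the $M\times L$ block is all $1$s and the $L\times M$ block all $0$s, the $L\times L$ block is a combinatorially skew-Hankel tournament of order $2k$ with score vector $(r_1,\dots,r_k,r_k,\dots,r_1)$, and the $M\times M$ block is a combinatorially skew-Hankel tournament of order $n-2k$ with score vector $(r_{k+1}-2k,\dots,r_m-2k,r_m-2k,\dots,r_{k+1}-2k)$, the latter being nonnegative because equality at $k$ forces $r_{k+1}\ge 2k$. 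Exactly as in Case~2 of the proof of Theorem~\ref{th:Hankel}, both of these vectors have nondecreasing first halves and satisfy their respective instances of $(\ref{eq:self2})$; since $0<2k<n$ and $0<n-2k<n$, minimality of $n$ produces both blocks, and reassembling them gives a combinatorially skew-Hankel tournament with score vector $R$, a contradiction. This exhausts the cases.

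The main obstacle is the verification in Case~1: one must confirm that an admissible column $p$ exists outside the four ``diagonal-linked'' columns $\{1,m,n+1-m,n\}$ (here $p=m$ and $p=n+1-m$ cannot satisfy $t'_{mp}=1$ since they meet a diagonal, $p=1$ is unusable because $(1,1)$ is on the main diagonal, and $p=n$ is merely excluded, so since row $m$ has at least two more $1$s than row $1$ an admissible $p$ remains), that the eight toggled positions are genuinely distinct and off both the main and Hankel diagonals, that toggling them preserves membership in the class (it does, since they form a union of symmetry orbits), and that the induced changes cancel at every player except $1,n,m,n+1-m$. A secondary point, routine but needing care, is the computation in Case~2 that each sub-block's score vector again satisfies the hypotheses of the theorem at the smaller order. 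I would also note that, combined with Lemma~\ref{lem:skew3}, this yields the same conclusion under the weaker assumption that $(r_1,\dots,r_{n/2})$ is only $3$-nearly nondecreasing, though that strengthening belongs to a corollary.
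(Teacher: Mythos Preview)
Your proposal follows essentially the same approach as the paper's proof: a minimal counterexample argument splitting into the ``all strict'' and ``some equality'' cases (your labeling is swapped from the paper's, but the content is identical). Your Case~2 decomposition matches the paper's Case~1 exactly, and your Case~1 switch is the same eight-position toggle the paper uses in its Case~2. One simplification you make: you verify $(\ref{eq:self2})$ for $R'$ directly, whereas the paper takes a detour through $(\ref{eq:self})$ and so needs an auxiliary ``gap $\geq 2$'' claim that you avoid.

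There is one point where your verification is incomplete (and the paper is equally loose here). You claim an admissible column $p\notin\{1,m,m+1,n\}$ exists because row $m$ has at least two more $1$s than row $1$. But both columns $1$ and $n$ can lie in $P=\{j:t'_{mj}=1,\,t'_{1j}=0\}$ (since $t'_{1,1}=t'_{1,n}=0$ are forced by the diagonals), so $|P|\ge 2$ does not by itself rule out $P=\{1,n\}$. This actually occurs: when $r_1=\cdots=r_m$ (so $r_m'-r_1'=2$), one can build a $T'$ with $P=\{1,n\}$. The fix is easy---if the only available $p$ is $n$ (or $1$), use the four-position orbit $(m,n),(n,m),(m+1,1),(1,m+1)$ instead of the eight-position one; toggling these four entries still preserves the skew-Hankel structure and changes exactly the scores of players $1,m,m+1,n$ by $+1,-1,-1,+1$---but your stated justification (``$p=n$ is merely excluded, so \ldots\ an admissible $p$ remains'') does not cover this.
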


\begin{proof} We have already verified that (\ref{eq:self2}) holds for a combinatorially skew-Hankel tournament with score vector $R$. For the  converse, assume that (\ref{eq:self2}) holds but there does not exist a combinatorially skew-Hankel tournament with score vector $R$. We let $R$ be a counterexample with $n$ minimum and for this $n$ with $r_1$ minimum.

 As above, we let $R^*$ be the nondecreasing vector $(r_1,r_1,r_2,r_2,\ldots, r_{n/2},r_{n/2})$.
By Lemma \ref{lem:skew2}, $R^*$ satisfies (\ref{eq:self}).
By Lemma \ref{lem:skew1} if equality holds in (\ref{eq:self}) for an odd integer, it also holds for the next even integer. We consider two cases depending on whether or not equality holds in (\ref{eq:self}) for some even integer $l=2h<n$.

\smallskip\noindent
Case 1: There exists an integer $l=2h<n$ such that equality holds in (\ref{eq:self}).

Let $R'=(r_1,\ldots,r_h,r_h,\ldots,r_1)$. Since $r_{h+1}\ge 2h$, the vector
\[R''=(r_1'',\ldots,r_{(n/2)-h}'',r_{(n/2)-h}'',\ldots,r_1'')=(r_{h+1}-2h,\ldots,r_{n/2}-2h,r_{n/2}-2h,\ldots,r_{h+1}-2h)\]
is nonnegative.
Then $R'$ satisfies the inequality in (\ref{eq:self2}) for $1\le k\le h$ with equality if $k=h$. Also
\[\sum_{i=1}^k (r_i''+2h)=\sum_{i=1}^{h+k} r_i-\sum_{i=1}^h r_i\ge (h+k)(h+k-1) -h(h-1)=k(k-1)+2hk,\]
with equality when $k=(n/2)-h$. Thus
\[\sum_{i=1}^k r_i''\ge k(k-1)\mbox{ with equality if $k=(n/2)-h$,}\]
and $R''$ satisfies the corresponding condition (\ref{eq:self2}).
By our minimality assumption on $n$, there exist combinatorially skew-Hankel tournaments $T'$ and $T''$ with score vectors $R'$ and $R''$, respectively. We can write
\[T'=\left[\begin{array}{c|c}
T'_1&T'_2\\ \hline
(T'_2)^{th}& (T'_1)^{th}\end{array}\right],\]
where $T_1'$ and $T_2'$ are $h\times h$.
Then
\[T=\left[\begin{array}{c|c|c}
T'_1&O_{h,n-2h}&T'_2\\ \hline
J_{n-2h,h}&T'' &J_{n-2h,h}\\ \hline
(T'_2)^{th}&O_{h,n-2h}&(T'_1)^{th}\end{array}\right]\]
is a combinatorially skew-Hankel tournament with score vector $R$, a contradiction.

\smallskip\noindent
Case 2: There does not exists an integer $l<n$ such that equality holds in (\ref{eq:self}).

We first claim that the difference between the left hand side of (\ref{eq:self}) and its right hand side is at least 2 for $l<n$. If $l$ is even both sides are even so this holds.
Now suppose that $l=2k+1$ and, to the contrary, that
\[\sum_{i=1}^{2k+1}r_i^*=\left({{2k+1}\choose 2}-k\right) +1.\]
We calculate that
\[r_{2k+1}^*=\sum_{i=1}^{2k+1}r_i^*-\sum_{i=1}^{2k}r_i^*\le \left({{2k+1}\choose 2}-k\right) +1 -\left(
{{2k}\choose 2}-k+2\right)=2k-1,\]
and
\[r_{2k+2}^*=\sum_{i=1}^{2k+2}r_i^*-\sum_{i=1}^{2k+1}r_i^*\ge \left({{2k+2}\choose 2}-(k+1)+2 \right)  -\left(
{{2k+1}\choose 2}-k+1\right)=2k+1.\]
Thus
\[2k+1\le r_{2k+2}^*=r_{k+1}=r_{2k+1}^*\le 2k-1,\] a contradiction.
This verifies our claim.
 Now we consider the palindromic $R'=(r_1',\ldots,r_{n/2}',r_{n/2}',\ldots,r_1')$ where
 $r_1'=r_1-1$, $r_{n/2}'=r_{n/2}+1$, and $r_i'=r_i$ for $2\le i\le (n/2)-1$.
 Then the first half of $R'$ is nondecreasing. Since equality does not occur in (\ref{eq:self})
 for $l<n$, we have $r_1\ge 1$, and so $r_1'\ge 0$, and for $l\ge 2$, the conditions corresponding to (\ref{eq:self}) hold for $R'$. By the minimality assumption on $r_1$, there is a combinatorially skew-Hankel tournament $T'=[t_{ij}']$ with score vector $R'$. Since $r_{n/2}'-r_1'\ge 2$, there is a  column $p\ne 1$ such that $t_{1p}'=0$ and $t_{n/2,p}'=1$. Since $T'$ is a combinatorially skew-Hankel tournament, we have $p\ne n/2,(n/2)+1$. Assuming without loss of generality that $p<n/2$, the $6\times 6$ principal submatrix $T'[1,p,n/2,n/2+1,n+1-p,n]$ has the form
 \[\begin{array}{c||c|c|c||c|c|c}
 &1&p&n/2&n/2+1&n+1-p&n\\ \hline\hline
 1&0&\cellcolor[gray]{0.8}0&&&&0\\ \hline
 p&\cellcolor[gray]{0.8}1&0&\cellcolor[gray]{0.8}0&&0&\\ \hline
 n/2&&\cellcolor[gray]{0.8}1&0&0&&\\ \hline\hline
 n/2+1&&&0&0&\cellcolor[gray]{0.8}1&\\ \hline
 n+1-p&&0&&\cellcolor[gray]{0.8}0&0&\cellcolor[gray]{0.8}1\\ \hline
 n&0&&&&\cellcolor[gray]{0.8}0&0\end{array}.\]
 If we now change 0s to 1s and 1s to 0s in the shaded cells in the positions of $T'$, we obtain a combinatorially skew-Hankel tournament with score vector $R$, a contradiction completing the proof of the theorem.
\end{proof}

In case $n$ is odd we have the following characterization of score vectors of $n\times n$ combinatorially skew-Hankel tournaments.

\begin{theorem}\label{th:skew2}
Let $n$ be an odd integer, and let $R=(r_1,\ldots,r_{(n-1)/2},r_{(n+1)/2},r_{(n-1)/2},\ldots,r_1)$ be a vector of nonnegative integers such that $(r_1,r_2,\ldots, r_{(n-1)/2})$ is nondecreasing. Then there exists a combinatorially skew-Hankel tournament with score vector $R$ if and only if
\begin{equation}\label{eq:skew2}
\sum_{i=1}^kr_i\ge k(k-1) +\left(k-\frac{r_{(n+1)/2}}{2}\right)^+,\  \left(k=1,2,\ldots,\frac{n-1}{2}\right), \mbox{ with equality if $k=\frac{n-1}{2}$}.
\end{equation}
\end{theorem}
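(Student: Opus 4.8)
The plan is to prove necessity by directly counting the $1$s in the first $k$ rows, and sufficiency by deleting the central row and column so as to reduce to the even-order case, Theorem~\ref{th:skew}. Throughout write $m=(n-1)/2$. Since the central entry of a combinatorially skew-Hankel tournament lies on both the main diagonal and the Hankel diagonal, row $(n+1)/2$ has only one forced zero, so $\rho:=r_{(n+1)/2}/2$ is an integer with $0\le\rho\le m$.

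\textbf{Necessity.} Let $T=[t_{ij}]\in\mathcal{T}_{H^*}(R)$ and fix $k$ with $1\le k\le m$. Since $k\le m$, the column sets $A=\{1,\dots,k\}$, $B=\{\,n+1-i:1\le i\le k\,\}$, $C=\{(n+1)/2\}$, and the complementary set are pairwise disjoint. No Hankel-diagonal position lies inside $T[A,A]$, so $T[A,A]$ is an ordinary tournament and contributes ${k\choose 2}$ ones to rows $1,\dots,k$. For the columns in $B$ the defining relations give $t_{i,\,n+1-i}=0$ and $t_{i,\,n+1-j}+t_{j,\,n+1-i}=1$ when $i\ne j$; pairing $\{i,n+1-j\}$ with $\{j,n+1-i\}$ shows these columns also contribute exactly ${k\choose 2}$. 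With $c_i:=t_{i,\,(n+1)/2}$, column $C$ contributes $\sum_{i=1}^{k}c_i$; because column $(n+1)/2$ is palindromic and equals the complement of (the central) row $(n+1)/2$ off the central position, $(c_1,\dots,c_m)$ has exactly $m-\rho$ ones, hence $\sum_{i=1}^{k}c_i\ge(m-\rho)-(m-k)=k-\rho$. The remaining columns contribute at least $0$. Adding, $\sum_{i=1}^{k}r_i\ge{k\choose 2}+{k\choose 2}+(k-\rho)^{+}=k(k-1)+(k-\rho)^{+}$, with equality forced at $k=m$ since then the complementary column set is empty and $C$ contributes exactly $m-\rho$. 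This is $(\ref{eq:skew2})$.

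\textbf{Sufficiency.} Assume $(\ref{eq:skew2})$. Let $\widehat R$ be the palindromic vector of order $2m$ whose first half is $(\widehat r_1,\dots,\widehat r_m)$ with $\widehat r_i=r_i$ for $i\le\rho$ and $\widehat r_i=r_i-1$ for $\rho<i\le m$; its entries are nonnegative, for if $\rho<m$ then $r_{\rho+1}\ge1$ (otherwise $r_1=\cdots=r_{\rho+1}=0$, violating $(\ref{eq:skew2})$ at $k=\rho+1$). Using $(\ref{eq:skew2})$ one checks that $\sum_{i=1}^{k}\widehat r_i\ge k(k-1)$ for $1\le k\le m$ with equality at $k=m$, i.e.\ $(\widehat r_1,\dots,\widehat r_m)$ satisfies $(\ref{eq:self2})$, and that it is nearly nondecreasing; by Lemma~\ref{lem:skew3} its nondecreasing rearrangement still satisfies $(\ref{eq:self2})$. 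Theorem~\ref{th:skew} then yields a combinatorially skew-Hankel tournament of order $2m$ with that rearranged score vector, and reordering rows and columns symmetrically (in the sense of the paragraph preceding Theorem~\ref{th:skew}) produces some $T'\in\mathcal{T}_{H^*}(\widehat R)$. Now reinsert a central line: let $d_i=r_i-\widehat r_i\in\{0,1\}$, let column $m+1$ of $T$ be the palindromic $(0,1)$-vector with entries $d_1,\dots,d_m$ in rows $1,\dots,m$, a $0$ in row $m+1$, and $d_m,\dots,d_1$ in rows $m+2,\dots,n$, let row $m+1$ be the complement of that column off the central entry, and let the remaining $(n-1)\times(n-1)$ submatrix be $T'$ with indices $\ge m+1$ shifted up by one. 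Since this shift sends Hankel-partner pairs of $\{1,\dots,2m\}$ to Hankel-partner pairs of $\{1,\dots,n\}\setminus\{m+1\}$ and the inserted column is palindromic, a direct check of the defining relations shows $T$ is combinatorially skew-Hankel with zeros on both diagonals; and its $i$th row sum is $\widehat r_i+d_i=r_i$ for $i\le m$ (hence also for $n+1-i$), while its central row sum is $(n-1)-2\sum_{i=1}^{m}d_i=2\rho=r_{(n+1)/2}$. Thus $T\in\mathcal{T}_{H^*}(R)$.

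\textbf{Main obstacle.} The one genuinely fiddly point is the reinsertion step: one must verify, while tracking both the index shift and the palindromy of the new row and column, that every combinatorially skew-Hankel relation and the zero patterns on the two diagonals are preserved. In the necessity argument the only subtlety is extracting the extra term $(k-\rho)^{+}$ instead of merely $k(k-1)$, which comes down to the inequality $\sum_{i=1}^{k}c_i\ge(m-\rho)-(m-k)$.
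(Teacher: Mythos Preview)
Your proof is correct and follows essentially the same approach as the paper's: necessity by counting $1$s in rows with small score, and sufficiency by deleting the central row and column, applying Lemma~\ref{lem:skew3} to the resulting nearly nondecreasing vector, and invoking Theorem~\ref{th:skew}. Your necessity argument is more explicit than the paper's one-sentence sketch (and you count over $k$ rows rather than the paper's $2k$ rows, exploiting the exact $\binom{k}{2}$ contribution from the $A\times B$ block via the relation $t_{i,n+1-j}+t_{j,n+1-i}=1$), but the underlying idea is the same.
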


\begin{proof} The inequalities (\ref{eq:skew2}) are  necessary  for the existence of a $T\in{\mathcal T}_{H^*}(R)$. This is because in the $2k$ rows of $T$ with the smallest row sums, the minimum number of 1s that could be in column $(n+1)/2$ is $(2k-r_{(n+1)/2})^+$.

If we take an $n\times n$ combinatorially skew-Hankel tournament $T$ with score vector $R$ and delete the middle row and middle column, we are left with an $(n-1)\times (n-1) $ combinatorially 
skew-Hankel tournament with a nearly nondecreasing score vector. 
The conditions (\ref{eq:skew2})
imply that if we choose the middle column to have its 1s in the rows with the largest row sums (thereby determining the middle column) and delete the middle row and middle column, the row sum vector of the resulting matrix is a nearly nondecreasing vector $R'$ satisfying the conditions corresponding to (\ref{eq:self2}); by Lemma \ref{lem:skew3},  the nondecreasing rearrangement $R''$ of $R'$ also satisfies the same conditions. Hence by Theorem \ref{th:skew} there exists a combinatorially skew-Hankel tournament with score vector $R''$ and thus one with
$R'$. Hence there is a combinatorially skew-Hankel  tournament with score vector $R$.
\end{proof}

We now describe an algorithm to produce a combinatorially skew-Hankel tournament with a  prescribed score vector $R$ when the conditions of Theorems \ref{th:skew} and \ref{th:skew2} hold.

\medskip
\centerline{\bf Skew-Hankel Algorithm for a $T=[t_{ij}]\in {\mathcal T}_{H^*}(R)$ with $R$ nondecreasing}
\medskip

Let $R_n=(r_1,r_2,\ldots,r_n)$ be a nondecreasing  vector of nonnegative integers such that $r_i=r_{n+1-i}$ for all $i$. If $n$ is even, assume that (\ref{eq:self2}) holds. If $n$ is odd, assume that 
(\ref{eq:skew2}) holds.

\begin{enumerate}
\item If $n$ is odd:
\begin{enumerate}
\item[(a)] Let $v=(v_1,v_2,\ldots,v_n)$ be a $(0,1)$-vector with $v_i=v_{n+1-i}=1$ if and only if $
\frac{r_{(n+1)/2}}{2}+1\le i\le \frac{n-1}{2}$, and let the nearly nondecreasing vector 
 $R_{n-1}'=(r_1',r_2',\ldots,r_{n-1}')$ be defined by $
r_i'=r_{(n-1)+1-i}'=r_i-v_i$ for $1\le i\le (n-1)/2$.
\item[(b)] Let $Q_{n-1}$ be an $(n-1)\times (n-1)$ permutation matrix of the form
\[\left[\begin{array}{cc}
P&O_{(n-1)/2,(n-1)/2}\\
O_{(n-1)/2,(n-1)/2}&P^{th}\end{array}\right]\]
such that the first half of $R_{n-1}=R_{n-1}'Q_{n-1}^t$ is  nondecreasing. 
\item[(c)] Let $T_{n-1}$ be a combinatorially skew-Hankel tournament with score vector $R_{n-1}$ obtained by applying this algorithm, and let $T_{n-1}'$ be the combinatorially skew-Hankel tournament  with score vector $R_{n-1}'$ written in the form
\[T_{n-1}'=Q_{n-1}^t T_{n-1}Q_{n-1}=\left[\begin{array}{c|c}
A&B\\ \hline
B^{th}&A^{th}\end{array}\right].\]
\item[(d)] Let the combinatorially skew-Hankel matrix $T_n$ be defined by
\[T_n=\left[\begin{array}{ccc|c|ccc}
&&&v_1&&&\\
&A&&\vdots&&B&\\
&&&v_{(n-1)/2}&&&\\ \hline
(1-v_1)&\cdots&(1-v_{(n-1)/2})&0&(1-v_{(n+3)/2})&\cdots& (1-v_n)\\ \hline
&&&v_{(n+3)/2}&&&\\
&B^{th}&&\vdots&&A^{th}&\\
&&&v_n&&&\end{array}\right].\]
\end{enumerate}
\item If $n$ is even:
\begin{enumerate}
\item[(a)]  Let $v=(v_1,v_2,\ldots,v_n)$ be a $(0,1)$-vector with $v_i=1$ if and only if $
2+\lceil\frac{r_{1}}{2}\rceil\le i\le n-1-\lfloor\frac{r_1}{2}\rfloor$,
and let the 2-nearly nondecreasing vector 
 $R_{n-2}'=(r_1',r_2',\ldots,r_{n-2}')$ be defined by $
r_i'=r_{i+1}-v_{i+1}-v_{n-i}$ for $1\le i\le n-2$.
\item[(b)] Let $Q_{n-2}$ be an $(n-2)\times (n-2)$ permutation matrix such that the first half of  $R_{n-2}=R_{n-2}'Q_{n-2}^t$ is  nondecreasing and satisfies the skew-Hankel property.
\item[(c)] Let $T_{n-2}$ be a combinatorially skew-Hankel tournament with score vector $R_{n-2}$ obtained by applying this algorithm, and let $T_n$ be the combinatorially skew-Hankel tournament  with score vector $R_n$  defined by

\[T_n=\left[\begin{array}{c|c|c}
0&(1-v_{n-1})\cdots(1-v_2)&0\\ \hline
v_{n-1}&&v_2\\
\vdots&Q_{n-2}^tT_{n-2}Q_{n-2}&\vdots\\ 
v_2&&v_{n-1}\\ \hline
0&(1-v_2) \cdots (1-v_{n-1})&0\end{array}\right].\]
\end{enumerate}
(3) Output $T_n$.
\end{enumerate}

\begin{example}{\rm Let $n=7$ and let $R=(2,2,4,2,4,2,2)$. Applying the skew-Hankel algorithm, we obtain the following combinatorially skew-Hankel tournament in ${\mathcal T}_H(R)$:
\[\left[\begin{array}{c|c|c||c||c|c|c}
0&1&0&0&1&0&0\\  \hline
0&0&0&1&0&0&1\\ \hline
1&1&0&1&0&1&0\\ \hline\hline
1&0&0&0&0&0&1\\ \hline\hline
0&1&0&1&0&1&1\\ \hline
1&0&0&1&0&0&0\\ \hline
0&0&1&0&0&1&0\end{array}\right].\]
In carrying out the algorithm, the resulting score vectors are illustrated below where $\rightarrow_{\pi}$ means a permutation is used  and $\rightarrow_a$ means the result of a step of the algorithm:
\[(2,2,4,2,4,2,2)\rightarrow_a (2,1,3,3,1,2)\rightarrow_{\pi} (1,2,3,3,2,1)\rightarrow_a
(1,1,1,1) \rightarrow_a (0,0).\]
}\end{example}

\begin{theorem}\label{th:skewalg}
The skew-Hankel algorithm constructs a combinatorially skew-Hankel tournament 
when $R$ satisfies the given conditions.
\end{theorem}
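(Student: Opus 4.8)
The plan is to follow the pattern of the proof of Theorem~\ref{th:alghankel}: treat the even and odd cases separately, in each case showing that the reduced half-vector produced by the algorithm satisfies the hypotheses needed to invoke the algorithm recursively, and that the matrix $T_n$ reassembled from the smaller tournament is indeed a combinatorially skew-Hankel tournament with score vector $R$. The whole argument then runs by induction on $n$, with trivial base cases $n\in\{0,1,2\}$, where $R$ is forced to be a string of $0$s and $T_n$ is the zero matrix.

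For $n$ even, set $a=\lceil r_1/2\rceil$ and $b=\lfloor r_1/2\rfloor$, so $a+b=r_1$ and $v_k=1$ exactly for $2+a\le k\le n-1-b$. First, since $r_{i+1}-2\le r_i'\le r_{i+1}$ and $(r_1,\dots,r_{n/2})$ is nondecreasing, the first half $(r_1',\dots,r_{n/2-1}')$ of $R_{n-2}'$ is $2$-nearly nondecreasing; moreover $R_{n-2}'$ is palindromic because $R$ is (one checks $r_i'=r_{n-1-i}'$ directly from $r_{n-i}=r_{i+1}$). Next I would verify $\sum_{i=1}^k r_i'\ge k(k-1)$ for $1\le k\le n/2-1$, with equality at $k=n/2-1$. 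In that range (which lies inside $k\le n-2-a$, since $r_1\le n/2-1$ forces $a\le n/2-1$) one has $\sum_{i=1}^k r_i'=\bigl(\sum_{i=1}^{k+1}r_i\bigr)-r_1-(k-a)^+-(k-b)^+$, and combining this with $\sum_{i=1}^{k+1}r_i\ge(k+1)k$ from $(\ref{eq:self2})$ reduces the claim to $r_1+(k-a)^++(k-b)^+\le 2k$; this holds in each of the cases $k\le b$, $k=a$ (only when $r_1$ is odd), and $k>a$, using $a+b=r_1$, and it becomes an equality at $k=n/2-1$ because $(\ref{eq:self2})$ is an equality at $k=n/2$. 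By Lemma~\ref{lem:skew3} the nondecreasing rearrangement of $(r_1',\dots,r_{n/2-1}')$, which is the first half of $R_{n-2}$, still satisfies $(\ref{eq:self2})$, so by the inductive hypothesis the algorithm produces a combinatorially skew-Hankel tournament $T_{n-2}$ with score vector $R_{n-2}$. Finally I would check that the bordered matrix $T_n$ is combinatorially skew-Hankel: the four corners lie on the main or Hankel diagonal and equal $0$; the border identities $t_{1i}=1-v_{n+1-i}=1-t_{i1}$, $t_{ni}=1-v_i=1-t_{in}$, and their Hankel counterparts give combinatorial skew-symmetry about both diagonals on the border; and $Q_{n-2}^t T_{n-2}Q_{n-2}$ keeps the interior combinatorially skew-Hankel because $Q_{n-2}$ has the block form commuting with reversal. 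For the score vector, row $1$ of $T_n$ has sum $(n-2)-\#\{k\in[2,n-1]:v_k=1\}=(n-2)-(n-2-r_1)=r_1$, and for $2\le i\le n-1$ row $i$ has sum $v_{n+1-i}+r_{i-1}'+v_i=v_{n+1-i}+(r_i-v_i-v_{n+1-i})+v_i=r_i$.

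For $n$ odd the argument is similar and reduces to the (now established) even case $n-1$. Here $v_i=v_{n+1-i}=1$ iff $\tfrac{r_{(n+1)/2}}{2}+1\le i\le\tfrac{n-1}{2}$, so $v$ is a nondecreasing threshold vector on the first half with $\sum_{i=1}^k v_i=\bigl(k-\tfrac{r_{(n+1)/2}}{2}\bigr)^+$. Then $r_i'=r_i-v_i$ is nearly nondecreasing, and subtracting $\sum_{i=1}^k v_i$ from $(\ref{eq:skew2})$ gives exactly $\sum_{i=1}^k r_i'\ge k(k-1)$ with equality at $k=(n-1)/2$, so the first half of $R_{n-1}'$ satisfies $(\ref{eq:self2})$; Lemma~\ref{lem:skew3} together with the even case then supply $T_{n-1}$, and its conjugate $T_{n-1}'=Q_{n-1}^t T_{n-1}Q_{n-1}$ (with $Q_{n-1}$ of the prescribed block form) remains combinatorially skew-Hankel with score vector $R_{n-1}'$. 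Inserting the middle row and column of $T_n$ with entries $v_i$ in column $(n+1)/2$ and $1-v_i$ in row $(n+1)/2$: the symmetry $v_i=v_{n+1-i}$ makes row and column $(n+1)/2$ palindromic and makes the skew relations hold, row $(n+1)/2$ has sum $2\bigl(\tfrac{n-1}{2}-\sum_i v_i\bigr)=r_{(n+1)/2}$, and every other row picks up precisely the entry $v_i$ from the new column, so its sum becomes $(r_i-v_i)+v_i=r_i$.

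The bulk of the write-up is the routine checking that the reassembled $T_n$ is combinatorially skew-Hankel and has the claimed score vector. The one place needing a little care is confirming that the $(k-a)^+,(k-b)^+$ bookkeeping for $\sum v_k$ is valid over the whole range $1\le k\le n/2-1$, i.e.\ that $a,b\le n/2-1$, which follows from $r_1\le n/2-1$ (a consequence of the equality in $(\ref{eq:self2})$ at $k=n/2$), and in checking that the permutations $Q_{n-2}$ and $Q_{n-1}$ can be chosen of the required block form while making the relevant half nondecreasing, which is just the within-halves reordering introduced before Theorem~\ref{th:skew}. I expect the even-case inequality verification to be the only part with any real content; the rest is bookkeeping.
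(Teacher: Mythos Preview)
Your overall plan mirrors the paper's proof closely: treat the parities separately, verify that the reduced half-vector satisfies $(\ref{eq:self2})$, invoke Lemma~\ref{lem:skew3}, and recurse. The odd case is handled correctly and is essentially the paper's argument. The border and score-vector checks you sketch are more explicit than what the paper writes out, but they are routine and correct.

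There is, however, a genuine gap in the even case. Your reduction
\[
\sum_{i=1}^k r_i' \;=\; \Bigl(\sum_{i=1}^{k+1} r_i\Bigr) - r_1 - (k-a)^+ - (k-b)^+ \;\ge\; (k+1)k - r_1 - (k-a)^+ - (k-b)^+
\]
is correct, and so is the conclusion that it suffices to check $r_1+(k-a)^++(k-b)^+\le 2k$. But this last inequality is \emph{false} for $k\le b$ (and also for $k=b$ when $r_1$ is odd): in that range both positive parts vanish and the claim becomes $r_1\le 2k$, whereas $r_1=a+b\ge 2b\ge 2k$. Concretely, take $n=10$ and $R=(4,4,4,4,4,4,4,4,4,4)$; then $a=b=2$, and for $k=1$ your inequality reads $4\le 2$. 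The bound $\sum_{i=1}^{k+1}r_i\ge (k+1)k$ from $(\ref{eq:self2})$ is simply too weak here.

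The paper avoids this by using a different estimate in the range $1\le k\le\lfloor r_1/2\rfloor$: since there is no subtraction in that range ($r_i'=r_{i+1}$), monotonicity gives $\sum_{i=1}^k r_i'=\sum_{i=2}^{k+1}r_i\ge \sum_{i=1}^k r_i\ge k(k-1)$, applying $(\ref{eq:self2})$ at index $k$ rather than $k+1$. If you patch your case $k\le b$ (and $k=b$ for odd $r_1$) with this monotonicity argument, the remaining two cases $k=a$ (odd $r_1$) and $k>a$ go through exactly as you wrote, since there your inequality $r_1+(k-a)^++(k-b)^+\le 2k$ becomes an equality. With that fix, your proof is complete and essentially identical to the paper's.
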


\begin{proof} First assume that $n$ is even. It then suffices to verify that $R_{n-2}'$ 
is 2-nearly nondecreasing  and that $\sum_{i=1}^kr_i'\ge k(k-1)$
for $1\le k\le (n/2)-1$ (we have equality for $k=(n/2)-1$ by construction).
By construction
\[r_i'=r_{(n-2)+1-i}'=
\left\{\begin{array}{cl}
r_{i+1},&\mbox{ if $1\le i\le \lfloor r_1/2\rfloor$,}\\
r_{i+1}-1,&\mbox{ if $\lfloor r_1/2\rfloor+1\le i\le \lceil r_1/2\rceil$,}\\
r_{i+1}-2,&\mbox{ if $\lceil r_1/2\rceil+1\le i\le (n/2)-1$.}
\end{array}\right.\]
The entries of $R_{n-2}'$ are nonnegative, since $r_2=1$ (and so $r_1=1$) implies that $v_2=0$ and thus $r_1'\ge 0$.  We calculate that

\noindent
Case $1\le k\le \lfloor r_1/2\rfloor$: 
\[\quad \sum_{i=1}^kr_i'=\sum_{i=2}^{k+1} r_i\ge \sum_{i=1}^kr_i\ge k(k-1).\]
\noindent
Case $k=(r_1+1)/2$  and  $r_1$ odd, implying that $r_{k+1}\ge r_1=2k-1$:
\[\sum_{i=1}^kr_i'=\sum_{i=1}^{k-1}+r_{k+1}-1\ge (k-1)(k-2)+(2k-1)-1=k(k-1).\]
\noindent
Case $\lceil r_1/2\rceil+1\le k\le n/2-1$ \mbox{ with $r_1$ even}:
\begin{eqnarray*}
\sum_{i=1}^kr_i'&=&\left(\sum_{i=1}^{r_1/2+1}r_i\right)+\left(\sum_{i=r_1/2+2}^{k+1}r_i\right)-2\left((k+1)-((r_1/2)+2)+1\right)\\
&=&\left(\sum_{i=2}^{k+1}r_i\right)-2k+r_1=\left(\sum_{i=1}^{k+1}r_i\right)-2k\ge (k+1)k-2k=k(k-1).\\
\end{eqnarray*}
\noindent
Case $\lceil r_1/2\rceil+1\le k\le n/2-1$ \mbox{ with $r_1$ odd}:
\begin{eqnarray*}
\sum_{i=1}^k r_i'&=&\left(\sum_{i=2}^{k+1}r_i\right)-1-2\left((k+1)-((r_1+1)/2 +2)+1\right)\\
&=& \left(\sum_{i=1}^{k+1}r_i\right)-1-2k+1\ge (k+1)k-2k=k(k-1).\\
\end{eqnarray*}
Since $R$ is  nondecreasing, it follows that $R_{n-2}'$ is 2-nearly nondecreasing. By Lemma \ref{lem:skew3} the nondecreasing vector $R_{n-2}$ satisfies the corresponding inequalities. Therefore there is a combinatorially skew-Hankel tournament with score vector $R_{n-2}$ and also one with score vector $R_{n-2}'$.

Now assume that $n$ is odd.   The entries of $R_{n-1}'$  are
\[r_i'=r_{n-i}'=\left\{\begin{array}{cl}
r_i,&\mbox{ if  $1\le i\le (r_{(n+1)/2})/2$,}\\
r_i-1,& \mbox{ if $ (r_{(n+1)/2})/2+1\le i\le (n-1)/2$,}\end{array}\right.\]
and thus $R_{n-1}'$ is nearly nondecreasing. 
If $1\le k\le (r_{(n+1)/2})/2$, then $\sum_{i=1}^k r_i'\ge k(k-1)$. If $ (r_{(n+1)/2})/2+1\le k \le (n-1)/2$, then
\[\sum_{i=1}^kr_i'=\sum_{i=1}^k r_i-(k-(r_{(n+1)/2})/2)\ge k(k-1).\]
Therefore, there exists a combinatorially skew-Hankel tournament with nondeceasing score vector $R_{n-1}$ and also one with score vector $R_{n-1}'$.
\end{proof}

We next identify certain moves that allow one to move from any combinatorially skew-Hankel tournament $T\in{\mathcal T}_{H^*}(R)$  to any other where each move  produces another combinatorially skew-Hankel tournament in ${\mathcal T}_{H^*}(R)$. The switches used in the moves  are the 4-cycle switches and 3-cycle switches used in the case of Hankel tournaments, but their types are different. We consider 4-cycle switches
$\Box_{i,j,n+1-i,n+1-j}$ that reverse the 4-cycle $i\rightarrow j\rightarrow n+1-i\rightarrow n+1-j\rightarrow i$, 
called {\it $4$-cycle skew-Hankel switches},
and pure  3-cycle switches $\bigtriangleup_{i,j,k}$ that reverse $i\rightarrow j\rightarrow k\rightarrow i$, that is, 
where $\{i,j,k\}\cap\{n+1-i,n+1-j,n+1-k\}=\emptyset \mbox{ or }\{n/2\}$.
%called {\it $3$-cycle skew-Hankel switches}.
A 4-cycle skew-Hankel switch
$\Box_{i,j,n+1-i,n+1-j}$ is illustrated  by 
\[\begin{array}{c||c|c|c|c}
&i&j&n+1-i&n+1-j\\ \hline\hline
i&0&1&&0\\ \hline
j&0&0&1&\\ \hline
n+1-i&&0&0&1\\ \hline
n+1-j&1&&0&0\end{array}\rightarrow
\begin{array}{c||c|c|c|c}
&i&j&n+1-i&n+1-j\\ \hline\hline
i&0&0&&1\\ \hline
j&1&0&0&\\ \hline
n+1-i&&1&0&0\\ \hline
n+1-j&0&&1&0\end{array}.\]

First assume that $n$ is even.
Let $T_1,T_2\in {\mathcal T}_{H^*}$ and consider $T_1$ and $T_2$ as digraphs. For an $n\times n$  combinatorially skew-Hankel tournament, if there is an edge $i\rightarrow j$, then there is also an edge $n+1-i\rightarrow n+1-j$. Consider the digraph $D$  with vertex set $\{1,2,\ldots,n\}$  whose edges   are the edges of $T_1$ that are not edges of  $T_2$ (corresponding to the 1s in the difference matrix $T_1-T_2$).
If a vertex has positive outdegree then it has positive indegree, and vice versa. 
Starting at any vertex, determine a longest  path $\gamma$: $i\rightarrow j\rightarrow \cdots\rightarrow k$ that does not contain both a vertex $p$ and a vertex $n+1-p$. There is an arc leaving vertex $k$ and it must then go either to an earlier vertex on $\gamma$ or else to a vertex $n+1-p$ where $p$ is a vertex on $\gamma$.  Thus we either get  a cycle
\[({\rm a})\quad i_1\rightarrow i_2\rightarrow\cdots\rightarrow i_q\rightarrow i_1 \mbox{ where $i_j\ne n+1-i_l$
for any $ i_j$ and $i_l$,} \]
or a cycle
\[({\rm b})\quad i_1\rightarrow i_2\rightarrow \cdots\rightarrow i_q\rightarrow n+1-i_1\rightarrow n+1-i_2\rightarrow\cdots\rightarrow n+1-i_q\rightarrow i_1.\]
In case (a) we also have the cycle
\[({\rm a}') \quad n+1-i_1\rightarrow n+1-i_2\rightarrow\cdots\rightarrow n+1-i_q\rightarrow n+1-i_1.\]

\smallskip\noindent
Case (a):  We show how to reverse the cycle of (a) and of (a$'$) by a sequence of 3-cycle switches and their complementary switches.  
There is a sequence of pairs of switches, each consisting of a 3-cycle switch and its complementary switch, which   reverses the cycles (a) and (a$'$)  resulting in a  combinatorially skew-Hankel tournament  in ${\mathcal T}_{H^*}(R)$. We can find the first 3-cycle by considering all the edges of $T_1$ between $i_1$ and the other vertices of the cycle (a). Reversing this 3-cycle and its complementary 3-cycle, we obtain one or two shorter cycles. Repeating on these shorter cycles we eventually will reverse cycles (a) and (a$'$).

\smallskip\noindent
Case (b):
Consider, for instance, the edges $i_1\rightarrow i_2$ and $n+1-i_1\rightarrow n+1-i_2$. If the edge $n+1-i_2\rightarrow i_1$ is in $T_1$,  and thus so is the edge $i_2\rightarrow n+1-i_1$, then a 4-cycle skew-Hankel switch $\square_{i_1,i_2,n+1-i_1,n+1-i_2}$ yields a tournament in ${\mathcal T}_{H^*}$  where our cycle has become two cycles of the forms (a) and (a$'$), and we proceed inductively. If  the edge $i_1\rightarrow n+1-i_2$ is  in $T_1$, and thus so is the edge
$n+1-i_1\rightarrow i_2$, then we have two cycles of the forms (a) and (a$'$), which inductively by pairs of 3-cycle switches we can reverse, resulting in the reversal of $i_1\rightarrow n+1-i_2$ and $n+1-i_1\rightarrow i_2$.  Now a 4-cycle skew-Hankel switch  completes the reversal of the cycle of (b) and   gives a tournament in ${\mathcal T}_{H^*}(R)$ which is closer to $T_2$. 

For $n$ odd, we have the following lemma.

\begin{lemma}\label{lem:oddskew}
Let $T_1$ and $T_2$ be two combinatorially skew-Hankel matrices with the same score vector $R$. Then there exists  a sequence of moves, where each move is  a pair consisting of a pure $3$-cycle  switch
 $\bigtriangleup_{i,j,k}$ and its   complementary  switch $\bigtriangleup_{i,j,k}^c$, which transforms  row $($respectively, column$)$ $(n+1)/2$ of $T_1$ into row $($respectively, column$)$ $(n+1)/2$ of $T_2$.
\end{lemma}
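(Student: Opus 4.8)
The plan is to reduce the problem to controlling a single ``difference digraph'' restricted to how it touches the central index $m := (n+1)/2$, and then to clear those discrepancies one at a time using cancelling pairs of $3$-cycle switches. First I would set up notation: let $T_1,T_2\in{\mathcal T}_{H^*}(R)$ and form the digraph $D$ on vertex set $\{1,\dots,n\}$ with an arc $i\to j$ whenever $(t_1)_{ij}=1$ and $(t_2)_{ij}=0$. Because $T_1$ and $T_2$ have the same score vector, at every vertex the out-degree in $D$ equals the in-degree, so $D$ decomposes into arc-disjoint directed cycles. Because both matrices are combinatorially skew-Hankel, $D$ is invariant under the central symmetry $i\mapsto n+1-i$; moreover row $m$ and column $m$ of a combinatorially skew-Hankel tournament are palindromic, so the discrepancies incident with $m$ come in symmetric pairs $(m\to i_1, m\to n+1-i_1)$ or $(i_1\to m, n+1-i_1\to m)$. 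Thus it suffices to show: given one discrepancy arc incident with $m$ — say $m\to i_1$ (the case $i_1\to m$ being the reverse) — we can, using pairs of a pure $3$-cycle switch and its complement, reduce the number of arcs of $D$ incident with $m$, while never changing row/column $m$ except in the intended way and never leaving ${\mathcal T}_{H^*}(R)$.

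Next I would run essentially the argument already given in the $n$-odd part of the Hankel-switch discussion preceding this lemma, but with pure $3$-cycle switches in place of the $4$-cycle Hankel machinery. Since at $m$ the out-degree equals the in-degree in $D$, the arc $m\to i_1$ lies on a cycle $C:\ m\to i_1\to i_2\to\cdots\to i_t\to m$. By central symmetry $D$ also contains $n+1-i_1\to m$. If $C$ (together with $i_1$) already gives us a suitable short configuration through $m$ avoiding the pair $\{i_1,n+1-i_1\}$, we reverse a $3$-cycle at $m$ directly; $m$ is a fixed point of the symmetry, so $\bigtriangleup_{i_1,m,n+1-i_1}^c=\bigtriangleup_{n+1-i_1,m,i_1}$, and applying $\bigtriangleup_{i_1,m,n+1-i_1}$ together with its complement is a legitimate move fixing $m$'s palindromy, provided the relevant entries form a $3$-cycle. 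In general we walk along $C$: $i_1\to i_2\to i_3\to\cdots$, and I would argue that we can always find the first index $i_j$ (with $i_j\notin\{n+1-i_1,\dots,n+1-i_{j-1}\}$) for which some arc $i_j\to n+1-i_h$, $h\le j$, is present in $T_1$; reversing that $3$-cycle $\bigtriangleup_{i_h,i_j,n+1-i_h}$ and its complement, and then a cascade of such switches for $l=h,h-1,\dots,2$, unwinds the walk so that finally $\bigtriangleup_{i_1,m,n+1-i_1}$ (and its complement) can be applied. If no such $i_j$ exists, then by the time we reach $i_t\to m$ the cycle $C$ contains no pair $\{p,n+1-p\}$, so $C$ and its mirror image $C'$ can be reversed together by a sequence of pairs of pure $3$-cycle switches and their complements, and this kills the arcs of $C$ (hence those incident with $m$). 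In all cases the number of discrepancy arcs at $m$ strictly decreases, and no arc incident with $m$ is created in a non-symmetric way, so by induction on that number we transform row and column $m$ of $T_1$ into those of $T_2$.

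The main obstacle I anticipate is the bookkeeping that makes the cascade of $3$-cycle switches actually close up: one must check that at each stage the chosen indices are genuinely distinct (in particular $i_j\ne n+1-i_h$ for the relevant $h$), that the $3\times 3$ submatrices really have the $3$-cycle pattern so the switch is defined, and that each switch together with its complement keeps the matrix combinatorially skew-Hankel rather than merely skew-symmetric about one diagonal — this is where purity of the $3$-cycle (i.e.\ $\{i,j,k\}\cap\{n+1-i,n+1-j,n+1-k\}=\emptyset$ or $=\{m\}$) is essential, since a non-pure $3$-cycle switch would clash with its own complement. A second, subtler point is guaranteeing that these reductions at $m$ do not disturb the symmetry we rely on: because every switch is paired with its complement in reverse order and the palindromic structure of row/column $m$ is preserved by such pairs (and by the central $3$-cycle switch through $m$), the invariant ``the current matrix lies in ${\mathcal T}_{H^*}(R)$'' is maintained throughout. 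Once these verifications are in place, the inductive reduction on the number of $D$-arcs incident with $m$ finishes the proof.
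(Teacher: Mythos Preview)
Your cascade argument, borrowed from the $n$-odd part of the Hankel-tournament switch discussion, does not transfer to the combinatorially skew-Hankel setting. The switches you plan to use at the end of the cascade, namely $\bigtriangleup_{i_h,i_j,n+1-i_h}$ and finally $\bigtriangleup_{i_1,m,n+1-i_1}$, all require an arc between a vertex and its symmetric image (for instance $n+1-i_h\to i_h$, or an arc between $i_1$ and $n+1-i_1$). But in a combinatorially skew-Hankel tournament the Hankel diagonal is identically zero: $t_{i,n+1-i}=t_{n+1-i,i}=0$ for every $i$. Hence the submatrix $T[i_h,i_j,n+1-i_h]$ can never be a $3$-cycle, and the switch $\bigtriangleup_{i_1,m,n+1-i_1}$ is never available. (Note also that $\{i_1,m,n+1-i_1\}$ equals its own image under $p\mapsto n+1-p$, so this switch would not be pure in the required sense even if the arc existed.) The cascade therefore cannot terminate the way you describe, and the reduction step fails precisely in the case where $C$ does meet a symmetric pair.

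The paper handles this differently: rather than trying to switch through symmetric pairs, it works entirely inside the difference digraph $D$ and \emph{reroutes} the cycle $C$ to avoid symmetric pairs before any switching is done. Concretely, if $C$ passes through both $i_a$ and $i_b=n+1-i_a$ (and likewise $i_c,i_d=n+1-i_c$ with $a,c$ extremal), one uses the invariance of $D$ under $p\mapsto n+1-p$ to replace a segment of $C$ by the mirror image of another segment, obtaining a shorter cycle $C'$ through $m$ with strictly fewer symmetric pairs; iterating yields a cycle $C^*$ through $m$ with $V^*=\emptyset$. Only then are $C^*$ and its mirror reversed by pairs of pure $3$-cycle switches --- and now purity is automatic because $C^*$ contains no vertex together with its complement. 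Your fallback paragraph (``if no such $i_j$ exists \ldots'') is exactly this good case, but you need a mechanism to reach it that does not rely on nonexistent Hankel-diagonal arcs.
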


\begin{proof}
As above we consider the digraph $D$. If row $(n+1)/2$ of $T_1$ agrees with row $(n+1)/2$ of $T_2$, there is nothing to prove. Otherwise there is a cycle $C$ containing vertex $(n+1)/2$.
We claim there is such a cycle  for which   if $i$ is a vertex of the cycle, then $(n+1-i)$ is not a vertex of the cycle.

Let $C$ be the cycle $(n+1)/2\rightarrow i_1\rightarrow i_2\rightarrow\cdots\rightarrow i_k\rightarrow (n+1)/2$. Let $V=\{j:i_j,(n+1-i_j)\in C, 1\le j\le k\}$. If $V\ne \emptyset$, let $a$ be the minimum and $c$ be the maximum of the integers in $V$.
Let $b$ and $d$ be the indices such that $i_b=n+1-i_a$ and $i_d=n+1-i_c$. We consider two cases (see Figure 2).

\smallskip\noindent
Case $b<d$: We consider the cycle $C'$ given by
\[\frac{n+1}{2}\rightarrow i_1\rightarrow\cdots\rightarrow
i_a\rightarrow (n+1-i_{b+1})\rightarrow\cdots\rightarrow
(n+1-i_{d-1})\rightarrow i_c\rightarrow \cdots\rightarrow i_k\rightarrow \frac{n+1}{2}.\]
It follows from the skew-Hankel property that the edges of $C'$ are in $D$, and for the corresponding $V'$, we have $|V'|<|V|$.

\smallskip\noindent
Case $b>d$: We then consider the cycle $C'$ given by
\[\frac{n+1}{2}\rightarrow i_1\rightarrow\cdots\rightarrow
i_a\rightarrow \cdots\rightarrow i_d\rightarrow (n+1-i_{c+1})\rightarrow\cdots\rightarrow
(n+1-i_{k})\rightarrow \frac{n+1}{2}.\]
(If $d=a$ and thus $c=b$, $i_a\rightarrow \cdots\rightarrow i_d$ reduces to just the vertex $i_a$.) As in the above case, we have $|V'|<|V|$. Repeating this process, we eventually produce a cycle $C^*$ such that $V^*=\emptyset$. This verifies our claim.

\begin{center}\begin{tabular}{c|c}
\begin{tikzpicture}
  [scale=0.8,auto=left,every node/.style={circle,draw,scale=1}]
  \node[label=above:$\frac{n+1}{2}$] (0) at (2,6) {};
  \node[label=above:$i_1$] (1) at (4,6) {};
  \node[label={right:$i_2=i_a$}] (2) at (6,4) {};
  \node[label=right:$i_3$] (3) at (6,2) {};
  \node[label={below:$i_4=i_b$}] (4) at (4,0) {};
  \node[label={below:$i_5=i_d$}] (5) at (2,0) {};
  \node[label=left:$i_6$] (6) at (0,2) {};
  \node[label={left:$i_7=i_c$}] (7) at (0,4) {};
  \node[label={[shift={(-1,-0.3)}]right:\rotatebox{-71.5}{$(n+1-i_1)$}}] (8) at (3,3) {};
  \draw[->-] (0) to (1);
  \draw[->-] (1) to (2);
  \draw[->-] (2) to (3);
  \draw[->-] (3) to (4);
  \draw[->-] (4) to (5);
  \draw[->-] (5) to (6);
  \draw[->-] (6) to (7);
  \draw[->-] (7) to (0);
  \draw[dashed,->-] (0) to (8);
  \draw[dashed,->-] (8) to (4);
  \draw[dashed,->-] (5) to (0);
  \draw[dashed,->--] (2) to (7);
\end{tikzpicture}
 &
\begin{tikzpicture}
  [scale=0.8,auto=left,every node/.style={circle,draw,scale=1}]
  \node[label=above:$\frac{n+1}{2}$] (0) at (2,6) {};
  \node[label=above:$i_1$] (1) at (4,6) {};
  \node[label={right:$i_2=i_a$}] (2) at (6,4) {};
  \node[label=right:$i_3$] (3) at (6,2) {};
  \node[label={below:$i_4=i_d$}] (4) at (4,0) {};
  \node[label={below:$i_5=i_b$}] (5) at (2,0) {};
  \node[label=left:$i_6$] (6) at (0,2) {};
  \node[label={left:$i_7=i_c$}] (7) at (0,4) {};
  \node[label={[shift={(1,0)}]left:\rotatebox{90}{$(n+1-i_1)$}}] (8) at (2,3) {};
  \draw[->-] (0) to (1);
  \draw[->-] (1) to (2);
  \draw[->-] (2) to (3);
  \draw[->-] (3) to (4);
  \draw[->-] (4) to (5);
  \draw[->-] (5) to (6);
  \draw[->-] (6) to (7);
  \draw[->-] (7) to (0);
  \draw[dashed,->-] (0) to (8);
  \draw[dashed,->-] (8) to (5);
  \draw[dashed,->-] (4) to (0);
\end{tikzpicture}
\end{tabular}\end{center}

\medskip

\centerline{Figure 2: Two possibilities in Lemma \ref{lem:oddskew} ($b<d$ and $b>d$)}

Now if row and column $(n+1)/2$ of $T_1$ do not agree with row and column $(n+1)/2$ of $T_2$,  we have a cycle $C$ 
\[(n+1)/2\rightarrow i_1\rightarrow i_2\rightarrow\cdots\rightarrow i_k\rightarrow (n+1)/2\] in $D$ (and so its reverse is in $T_2$) such that for each vertex $i$ of $C$, $(n+1-i)$ is not a vertex of $C$. By the skew-Hankel property, the cycle $C^{*}$ given by 
\[(n+1)/2\rightarrow (n+1- i_1)\rightarrow (n+1-i_2)\rightarrow\cdots\rightarrow (n+1- i_k\rightarrow (n+1)/2\] is also in  $D$ 
(and so its reverse is in $T_2$). Thus we can apply a sequence of moves, each of which is a 
pair consisting of a  3-cycle switch $\bigtriangleup_{i,j,k}$ and its complementary switch  $\bigtriangleup_{i,j,k}^c$,  to $T_1$ and  produce a $T_1'\in {\mathcal T}_{H^*}(R)$ 
such that   row and column $(n+1)/2$ of $T'$ are closer to
 row and column $(n+1)/2$ of $T_2$.
We continue like this until we get  row and column $(n+1)/2$ of $T_2$.
\end{proof}

Thus we have proved the following theorem.

\begin{theorem}\label{th:skewswitch}
Let $T_1$ and $T_2$ be two combinatorially skew-Hankel tournaments in ${\mathcal T}_{H^*}(R)$. Then there exists a sequence of $4$-cycle skew-Hankel switches $\Box_{i,j,n+1-i,n+1-j}$ and pairs consisting of a pure  $3$-cycle  switch 
$\bigtriangleup_{i,j,k}$ and  its complementary  switch $\bigtriangleup_{i,j,k}^c$ which brings $T_1$ to $T_2$, such that each of the moves produces  a combinatorially skew-Hankel tournament in 
 ${\mathcal T}_{H^*}(R)$.
\end{theorem}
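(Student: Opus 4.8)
The statement is exactly what the discussion preceding it establishes, so the plan is to package that discussion as a clean induction on the number $d(T_1,T_2)$ of positions in which $T_1$ and $T_2$ differ. When $d=0$ there is nothing to prove, and each move described below will be shown to decrease $d$ strictly while keeping the current matrix in ${\mathcal T}_{H^*}(R)$. The cases $n$ even and $n$ odd are handled separately, with the odd case reduced to the even one via Lemma \ref{lem:oddskew}.

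For $n$ even, the first step is to form the \emph{difference digraph} $D$ on vertex set $\{1,\dots,n\}$ whose arcs are the $1$s of $T_1-T_2$, i.e. $i\to j$ is an arc of $D$ precisely when $(T_1)_{ij}=1$ and $(T_2)_{ij}=0$. Two observations drive the argument: since $T_1$ and $T_2$ share the score vector $R$, the out-degree equals the in-degree at every vertex of $D$, so the arc set of $D$ partitions into directed cycles; and since $T_1$ and $T_2$ are both combinatorially skew-Hankel, the map $i\mapsto n+1-i$ is an automorphism of $D$. I would then take a longest directed path $\gamma\colon i\to j\to\cdots\to k$ in $D$ containing no antipodal pair $\{p,n+1-p\}$; the arc leaving $k$ must return either to an earlier vertex of $\gamma$ or to $n+1-p$ for some vertex $p$ of $\gamma$, which yields a cycle of type (a) (no antipodal pair, together with its vertex-disjoint antipodal copy (a$'$)) or a cycle of type (b) of the form $i_1\to\cdots\to i_q\to n+1-i_1\to\cdots\to n+1-i_q\to i_1$.

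In case (a) the plan is to reverse the cycle $i_1\to\cdots\to i_q\to i_1$ together with its antipodal copy (a$'$) by a sequence of moves, each a pure $3$-cycle switch $\bigtriangleup_{a,b,c}$ paired with its complementary switch $\bigtriangleup_{a,b,c}^c=\bigtriangleup_{n+1-a,n+1-b,n+1-c}$: examining the $T_1$-arcs among the cycle vertices incident to a fixed vertex of (a) produces a $3$-cycle whose reversal splits (a) into one or two shorter cycles still free of antipodal pairs, while the complementary switch does the same to (a$'$); because (a) and (a$'$) are vertex-disjoint the two switches do not interfere, and iterating reverses both cycles. In case (b), if the arc $n+1-i_2\to i_1$ (equivalently $i_2\to n+1-i_1$) lies in $T_1$, then a single $4$-cycle skew-Hankel switch $\square_{i_1,i_2,n+1-i_1,n+1-i_2}$ breaks (b) into an (a)/(a$'$) pair and we reduce to the previous case; if instead $i_1\to n+1-i_2$ lies in $T_1$, then $T_1$ contains the antipodal cycles $i_1\to n+1-i_2\to\cdots$ and $n+1-i_1\to i_2\to\cdots$, which by the case-(a) argument can be reversed by pairs of $3$-cycle switches, after which one $4$-cycle skew-Hankel switch completes the reversal of (b). In every instance $d(T_1,T_2)$ drops, and the outcome stays in ${\mathcal T}_{H^*}(R)$ because each primary switch is either paired with the complementary switch that restores the skew-Hankel symmetry, or is self-complementary, as with the $4$-cycle skew-Hankel switch.

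For $n$ odd, I would first apply Lemma \ref{lem:oddskew} to transform $T_1$, via pairs consisting of a pure $3$-cycle switch and its complementary switch, into a $T_1'\in{\mathcal T}_{H^*}(R)$ whose row and column $(n+1)/2$ coincide with those of $T_2$; deleting row and column $(n+1)/2$ from $T_1'$ and from $T_2$ then leaves two $(n-1)\times(n-1)$ combinatorially skew-Hankel tournaments with a common score vector, and since $n-1$ is even the case just handled applies and lifts back, because all switches used there involve only indices $\ne(n+1)/2$ and hence leave the fixed middle row and column untouched. I expect the genuine obstacle to lie in case (b) of the even argument: one must argue that the ``peeling'' of the antipodal cycles by $3$-cycle switch pairs can always be arranged so that the concluding $4$-cycle skew-Hankel switch is legal, and — throughout — one must verify that pairing each primary switch with its complement really keeps every intermediate matrix combinatorially skew-Hankel and not merely a tournament. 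This bookkeeping, rather than any single hard computation, is the crux.
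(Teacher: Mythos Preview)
Your proposal is correct and follows essentially the same route as the paper: form the difference digraph $D$, exploit that $i\mapsto n+1-i$ is a $D$-automorphism, extract a cycle of type (a) (no antipodal pair, plus its disjoint antipodal copy) or type (b) (the self-antipodal $2q$-cycle), dispatch (a) by pairs of pure $3$-cycle switches and (b) by one $4$-cycle skew-Hankel switch combined with the case-(a) machinery, and handle odd $n$ via Lemma \ref{lem:oddskew}. The only cosmetic difference is that you frame the argument as an explicit induction on $d(T_1,T_2)$, whereas the paper leaves this implicit; note that in the second subcase of (b) the intermediate pairs of $3$-cycle switches need not individually decrease $d$ (they reverse a cycle of $T_1$ rather than of $D$), so your induction should be on $d$ after the full treatment of a cycle, not after each elementary move.
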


\begin{example}{\rm 
Let $n=5$ and $R=(1,2,2,2,1)$, and consider the two combinatorially skew-Hankel tournaments $T_1$ and $T_2$ in ${\mathcal T}_{H^*}(R)$ given by
\[T_1=\left[
\begin{array}{c|c|c|c|c}
0&0&1&0&0\\ \hline
1&0&0&0&1\\ \hline
0&1&0&1&0\\ \hline
1&0&0&0&1\\ \hline
0&0&1&0&0\end{array}\right]\mbox{ and }
T_2=\left[
\begin{array}{c|c|c|c|c}
0&0&0&1&0\\ \hline
1&0&1&0&0\\ \hline
1&0&0&0&1\\ \hline
0&0&1&0&1\\ \hline
0&1&0&0&0\end{array}\right].\]
We can move from $T_1$ to $T_2$ by the  pair of pure 3-cycle switches $\bigtriangleup_{1,3,4}$ and $\bigtriangleup_{5,3,2}$.

}\end{example}

Again for completeness we mention the following. It is natural  to also consider $n\times n$  {\it combinatorially skew-Hankel loopy tournaments} $T=[t_{ij}]$, that is, combinatorially skew-Hankel tournaments
with possible $1$s on the main diagonal. By the skew-Hankel property, we now have that
$t_{ii}=1-t_{n+1-i,n+1-i}$ for all $i$.
The score vector $R=(r_1,r_2,\ldots,r_n)$ of a  combinatorially skew-Hankel loopy tournament satisfies
$r_i=r_{n+1-i}+1$ if $t_{ii}=1$ and $t_{n+1-i,n+1-i}=0$. 
 Thus the score vector of a  combinatorially skew-Hankel loopy tournament determines which elements on the main diagonal equal 1 and which equal 0. If $R'$ is obtained from $R$ by subtracting 1 from those $i$ for which $r_i=r_{n+1-i}+1$, then  there is a  combinatorially skew-Hankel loopy tournament with score vector $R$ if and only if there is a combinatorially  skew-Hankel tournament with score vector $R'$. 

By taking the columns of a combinatorially skew-Hankel loopy tournament in the reverse order, we obtain a {\it combinatorially skew-Hankel H-loopy  tournament}, that is, a combinatorially skew-Hankel tournament with possible 1s on the Hankel diagonal. Thus combinatorially skew-Hankel H-loopy  tournaments are equivalent to combinatorially skew-Hankel loopy tournaments.
But we may also consider {\it combinatorially skew-Hankel doubly-loopy tournaments}, that is, combinatorially skew-Hankel tournaments $T=[t_{ij}]$ with possible 1s on both the main diagonal and the Hankel diagonal. Thus $t_{ii}=1-t_{n+1-i,n+1-i}$ for all $i$ and $t_{i,n+1-i}=t_{n+1-i,i}$ for all $i$. 
The score vector $R=(r_1,r_2,\ldots,r_n)$ of a  combinatorially skew-Hankel doubly-loopy tournament satisfies
\[r_i=\left\{\begin{array}{cl}
r_{n+1-i},&\mbox{ if $t_{ii}=1$ and $t_{i,n+1-i}=0$, or $t_{n+1-i,i}=1$ and $t_{n+1-i,i}=0$}\\
r_{n+1-i}+2,&\mbox{ if $t_{ii}=t_{i,n+1-i}=1$ and so $t_{n+1-i,i}=t_{n+1-i,n+1-i}=0$.}
\end{array}\right.\] 
Let $R'$ be obtained from $R$ by subtracting 1 from $r_i$ and $r_{n+1-i}$  if $r_{i}=r_{n+1-i}$ and subtracting 2 from $r_i$ if $r_i=r_{n+1-i}+2$. Then $R$ is the score vector of a combinatorially skew-Hankel doubly-loopy tournament if and only if $R'$ is the score vector of a combinatorially skew-Hankel tournament.  Notice that when $r_i=r_{n+1-i}$ we can either put 1s in positions $(i,i)$ and $(n+1-i,i)$ or in positions
$(i,n+1-i)$ and $(n+1-i,n+1-i)$.

\section{Summary}

In this paper we have obtained necessary and sufficient conditions for the existence of loopy tournaments, Hankel tournaments, and combinatorially skew-Hankel tournaments with a prescribed score vector. We have also given algorithms for their construction 
when these conditions are satisfied. In addition, we have shown how to move from one tournament to another tournament in the same class by moves given by  switches and pairs of switches. The moves  used in each case are:
\begin{enumerate}
\item[(a)] loopy tournaments: $\hbox{$\rightarrow$}\kern -1.5pt\hbox{$\circ$}_{ij}$;  $\triangle_{i,j,k}$
\item[(b)] Hankel tournaments: $\triangle_{i,j,k}$  followed by $\triangle_{k,j,i}^c$; $\triangle_{i,(n+1)/2,n+1-i}$;  $\square_{i,j,n+1-j,n+1-i}$
\item[(c)] combinatorially skew-Hankel tournaments:  $\triangle_{i,j,k}$ followed by $\triangle_{i,j,k}^c$;  $\square_{i,j,n+1-i,n+1-j}$.
\end{enumerate}

\bibliographystyle{plain}

\end{document}